\theoremstyle{plain}
\newtheorem{thm}{Theorem}[section]
\newtheorem{lem}[thm]{Lemma}
\newtheorem{prop}[thm]{Proposition}
\newtheorem{cor}[thm]{Corollary}
\newtheorem*{conj}{Conjecture}
\theoremstyle{definition}
\newtheorem{algm}[thm]{Algorithm}
\newtheorem{dfn}[thm]{Definition}
\newtheorem{ex}[thm]{Example}
\theoremstyle{remark}
\newtheorem{rmk}[thm]{Remark}
\title{Rank-determining sets of metric graphs}
\author{Ye Luo}
\address{School of Mathematics, Georgia Institute of Technology, Atlanta, GA 30332}
\email{luoye@math.gatech.edu}
\subjclass[2000]{05C38, 14H99}
\keywords{Finite graph, Metric graph, Tropical curve, Algebraic curve, Rank-determining set, Special open set}
\date{}
\begin{document}
\maketitle

\begin{abstract}
\noindent
A metric graph is a geometric realization of a finite graph by identifying each edge with a real interval. A divisor on a metric graph $\Gamma$ is an element of the free abelian group on $\Gamma$. The rank of a divisor on a metric graph is a concept appearing in the Riemann-Roch theorem for metric graphs (or tropical
curves) due to Gathmann and Kerber~\cite{GK07}, and Mikhalkin and Zharkov~\cite{MZ07}. We define a \emph{rank-determining set} of a metric graph $\Gamma$ to be a
subset $A$ of $\Gamma$ such that the rank of a divisor $D$ on $\Gamma$ is always equal to the rank of $D$ restricted
on $A$. We show constructively in this paper that there exist finite
rank-determining sets. In addition, we investigate the properties of
rank-determining sets in general and formulate a criterion for
rank-determining sets. Our analysis is a based on an algorithm to
derive the $v_0$-reduced divisor from any effective divisor in the
same linear system.
\end{abstract}

\section{Introduction}
In the past few years, people have been attracted to investigate the
analogies and connections among linear systems on algebraic curves,
finite graphs, metric graphs and tropical curves
\cite{B07,BN07,GK07,HKN07,MZ07}. In particular, a recent work of
Hladk\'{y}, Kr\'{a}l' and Norine \cite{HKN07} shows that the rank of
a divisor $D$ on a graph equals the rank of $D$ on the corresponding
metric graph $\Gamma$. However, their result requires that all the
edges of $\Gamma$ have length $1$ and $D$ is zero on the interiors
of the edges. As an initial step of this paper, we assert that these
restrictions are not necessary by proving that for an arbitrary
metric graph $\Gamma$ with a vertex set $\Omega$ and an arbitrary
divisor $D$ on $\Gamma$, the rank $r(D)$ of $D$ equals the $\Omega$-restricted rank $r_\Omega(D)$ of $D$. This
result motivates us into further investigations on the subsets of
$\Gamma$ having such a property, to which we give the name
\emph{rank-determining sets}.

\subsection{Preliminaries}
Throughout this paper, a \emph{graph} $G$ means a finite connected
multigraph with no loop edges, and a \emph{metric graph} $\Gamma$
means a graph having each edge assigned a positive length. And
roughly speaking, a \emph{tropical curve} is a metric graph where we
admit some edges incident with vertices of degree $1$ having infinite
length \cite{M06}\cite{MZ07}. We will expand our discussions within
the framework of metric graphs, while the conclusions also apply for
tropical curves.

Denote the vertex set and the edge set of a graph $G$ by $V(G)$ and
$E(G)$, respectively. The \emph{genus} $g$ of $G$ is the first Betti
number of $G$ or the maximum number of independent cycles of $G$,
which equals $\#E(G)-\#V(G)+1$.

We can also define vertices and edges on a metric graph $\Gamma$. We
call $\Omega$ a \emph{vertex set} of $\Gamma$ and the elements of
$\Omega$ \emph{vertices}, if $\Omega$ is a nonempty finite subset of
$\Gamma$ satisfying the following conditions:
\begin{compactenum}[(i)]
\item $\Gamma\setminus\Omega$ is a disjoint union of subspaces $e^o_i$ isometric to open intervals.
\item Let $e_i$ be the closure of $e^o_i$. For all $i$, $e_i\setminus e^o_i$ contains exactly two distinct points, which are both elements of
$\Omega$. We call $e_i$ an \emph{edge} of $\Gamma$, $e^o_i$ the
\emph{interior} of $e_i$, and $v\in e^o_i$ an \emph{internal point} of
$e_i$. And we say that the two vertices in $e_i\setminus e^o_i$ are
two \emph{ends} (or \emph{end-points}) of $e_i$ or $e^o_i$, while $e_i$ is an edge
\emph{connecting} these vertices.
\end{compactenum}
Clearly, $\Gamma$ is loopless with respect to $\Omega$. And by our
definition of a vertex set, there might be multiple edges between
two vertices, which is not allowed in definitions of vertex sets by
other authors (see, e.g., \cite{BF06}). Throughout this paper,
whenever we mention a vertex or an edge of a metric graph $\Gamma$,
we always assume a vertex set of $\Gamma$ is predetermined, whether
or not it is presented explicitly. Given a vertex set of $\Gamma$,
the genus of $\Gamma$ can be computed just like in the graph case
(note that the genus is independent of how we choose vertex sets).

By identifying each edge with a closed interval, the subintervals
are called \emph{segments} of $\Gamma$. The boundary points of a
segment are called the \emph{ends} (or \emph{end-points}) of that segment. In
addition, we transport the conventional notations for intervals onto
metric graphs. For example, let $w_1$ and $w_2$ be two vertices that
are neighbors, $e$ be one of the edges connecting them, and $v$ be
an internal point $e$. Then $(w_1,w_2)$ represents all the internal
points of the edges connecting $w_1$ and $w_2$. And to avoid
confusion in case of multiple edges, $e$ can be represented by
$[w_1,v,w_2]$. We use $\text{dist}(x,y)$ to denote the distance
between two points $x$ and $y$ measured on $\Gamma$, and define the
distance between two subsets $X$ and $Y$ of $\Gamma$, denoted by
$\text{dist}(X,Y)$, to be $\inf\{\text{dist}(x,y), x\in X, y\in
Y\}$. If $e'$ is a segment, and $x,y\in e'$, then we use
$\text{dist}_{e'}(x,y)$ to denote the
distance between $x$ and $y$ measured on $e'$. 

For simplicity of notation, if $v$ is a point
of a metric graph, sometimes we refer to the singleton $\{v\}$ by just
writing $v$.

A divisor $D$ on $G$ is an element of the free abelian group
$\text{Div}G$ on the vertex set of $G$. We can uniquely write a
divisor $D\in \text{Div}G$ as $D=\sum_{v\in V(G)}D(v)(v)$, where
$D(v)\in\mathbb{Z}$ evaluates $D$ at $v$. The \emph{degree} of $D$
is defined by the formula $\text{deg}(D)=\sum_{v\in V(G)}D(v)$. A
divisor $D$ is called \emph{effective} if $D(v)\geqslant 0$ for all
$v\in V(G)$. We denote the set of all effective divisors on $G$ by
$\text{Div}_+G$, and the set of all effective divisors of degree $s$
on $G$ by $\text{Div}_+^sG$. Provided a function $f:V(G)\rightarrow
\mathbb{Z}$, the divisor associated to $f$ is given by
\begin{equation*}
D_f=\sum_{v\in V(G)}\sum_{e=wv\in E(G)}(f(v)-f(w))(v),
\end{equation*}
and called \emph{principal}. It is easy to see that the principal
divisors have degree $0$. For two divisors $D$ and $D'$, we say that
$D$ is \emph{linearly equivalent} to $D'$ or $D\sim D'$ if $D-D'$
is principal. And we defined the \emph{linear system associated to a
divisor} $D$ to be the set $|D|$ of all effective divisors linearly
equivalent to $D$. Since $|D|$ does not have a pure dimension, Baker
and Norine \cite{BN07} introduced the concept of the \emph{rank} of a
divisor $D$, denoted by $r_G(D)$, to describe the dimensional aspect
of $|D|$ . Explicitly, $r_G(D)=-1$ if $|D|=\emptyset$, and
$r_G(D)\geqslant s\geqslant 0$ if and only if $|D-E|\neq \emptyset$ for all
$E\in\text{Div}_+^sG$. When it is clear that $D$ is defined on $G$,
we usually omit the subscript and write $r(D)$ instead of $r_G(D)$.

Analogously, for a metric graph (or a tropical curve) $\Gamma$,
elements of the free abelian group $\text{Div}\Gamma$ on $\Gamma$
are called divisors on $\Gamma$. We can define the degree of a
divisor and the notion of effective divisors in a similar way. A
rational function $f$ on $\Gamma$ is a continuous, piecewise linear
real function with integral slopes. The \emph{order} $\text{ord}_vf$
of $f$ at a point $v\in\Gamma$ is the sum of the outgoing slopes of
all the segments emanating from $v$. Any rational function $f$ has an associated a divisor
$(f):=\sum_{v\in\Gamma}\text{ord}_vf\cdot(v)$. We say $(f)$ is
\emph{principal} for all rational functions $f$, and define linear
equivalence relations and linear systems as on graphs. Also, we may
define the \emph{rank} $r_\Gamma(D)$ of a divisor $D$ on $\Gamma$.
Explicitly, $r_\Gamma(D)=-1$ if $|D|=\emptyset$, and
$r_\Gamma(D)\geqslant s\geqslant 0$ if and only if $|D-E|\neq \emptyset$ for
all $E\in\text{Div}_+^s\Gamma$. We may omit the subscript and use
$r(D)$ to represent the rank of a divisor $D$, when there is no
confusion that $D$ is defined on $\Gamma$.

\subsection{Overview}
As an analogue of the classical Riemann-Roch theorem on Riemann
surfaces, Baker and Norine formulated and proved the Riemann-Roch
theorem for the rank of divisors on finite graphs \cite{BN07}. We define the \emph{canonical divisor} on a graph $G$ to be the divisor $K$ given by
$K=\sum_{v\in V(G)}(\text{deg}(v)-2)(v)$.

\begin{thm}[Riemann-Roch thoerem for graphs] \label{T:RR-graph}
Let $G$ be a graph of genus $g$ and $K$ the canonical divisor on $G$. Then for all $D\in\emph{Div}G$, we have
\begin{equation*}
r_G(D)-r_G(K-D)=\emph{deg}(D)+1-g.
\end{equation*}
\end{thm}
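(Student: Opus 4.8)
\textbf{Proof plan (following Baker and Norine \cite{BN07}).} The strategy is to reduce everything to the combinatorics of \emph{$v_0$-reduced divisors}. Fix a vertex $v_0\in V(G)$. The first task is to set up reduced-divisor theory: every divisor class on $G$ contains a unique $v_0$-reduced representative --- a divisor $N$ with $N(v)\geqslant 0$ for all $v\neq v_0$ such that every nonempty $S\subseteq V(G)\setminus\{v_0\}$ contains a vertex $v$ with $N(v)<\operatorname{outdeg}_S(v)$, where $\operatorname{outdeg}_S(v)$ counts the edges from $v$ leaving $S$ --- and Dhar's burning algorithm both characterizes reducedness and outputs an explicit vertex-elimination order witnessing it. From this I would extract the two facts needed below: $|D|\neq\emptyset$ if and only if the $v_0$-reduced divisor equivalent to $D$ is effective; and, reading off the elimination order, $\deg D\geqslant g$ implies $|D|\neq\emptyset$.

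Next, for each total order $<$ on $V(G)$ set $\nu_<=\sum_{v}(\operatorname{indeg}_<(v)-1)(v)$, where $\operatorname{indeg}_<(v)$ counts the edges from $v$ to strictly smaller vertices; then $\deg\nu_<=\#E(G)-\#V(G)=g-1$, and $K=\nu_<+\nu_{<^{\mathrm{op}}}$ for the reverse order $<^{\mathrm{op}}$. The heart of the proof is the dichotomy: for every divisor $D$, exactly one of $|D|\neq\emptyset$ and ``$|\nu_<-D|\neq\emptyset$ for some $<$'' holds. For the ``not both'' direction it suffices to show $|\nu_<|=\emptyset$ for every $<$: if $\nu_<+D_f$ were effective, choose a vertex $u$ that minimizes $f$ and is $<$-smallest among all such vertices; every edge from $u$ to a strictly smaller vertex then runs to a vertex where $f$ is strictly larger, which forces $D_f(u)\leqslant-\operatorname{indeg}_<(u)$, hence $(\nu_<+D_f)(u)\leqslant-1$, a contradiction. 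For ``at least one'', suppose $|D|=\emptyset$; its $v_0$-reduced representative $D_0$ satisfies $D_0(v_0)<0$, and along the elimination order $v_0<u_1<\cdots<u_n$ produced by Dhar's algorithm one reads off $D_0(u_j)<\operatorname{indeg}_<(u_j)$ for every $j$, so $D_0\leqslant\nu_<$ and therefore $|\nu_<-D|\neq\emptyset$.

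To conclude, put $d(F)=\min\{\deg E:E\geqslant 0,\ |F+E|\neq\emptyset\}$, which is finite by the degree bound above. Unwinding the definition of rank, $r(D)+1$ is the least degree of an effective divisor $E$ with $|D-E|=\emptyset$; rewriting the condition $|D-E|=\emptyset$ through the dichotomy gives $r(D)+1=\min_{<}d(\nu_<-D)$. A short argument shows $d(-F)=d(F)+\deg F$, so $d(\nu_<-D)=d(D-\nu_<)+\deg D-(g-1)$; and applying the identity $r(D')+1=\min_{<}d(\nu_<-D')$ with $D'=K-D$, using $\nu_<-K=-\nu_{<^{\mathrm{op}}}$ together with the fact that $<\mapsto<^{\mathrm{op}}$ permutes the total orders, gives $r(K-D)+1=\min_{<}d(D-\nu_<)$. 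Subtracting the two expressions yields $r(D)-r(K-D)=\deg D+1-g$.

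I expect the main obstacle to be the dichotomy lemma, and inside it the reduced-divisor foundations --- existence and uniqueness of reduced representatives, and the correctness and termination of Dhar's burning algorithm --- since both directions of the dichotomy depend on the elimination order it returns. Once that machinery is in hand, the Riemann--Roch identity itself is just bookkeeping with degrees. Since the metric-graph part of the paper will in any case require an analogous reduction algorithm, it is natural to build this groundwork first.
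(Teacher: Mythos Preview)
The paper does not give its own proof of this statement: Theorem~\ref{T:RR-graph} is stated as background and attributed directly to Baker and Norine~\cite{BN07}, with no argument supplied. Your proposal is a faithful and correct outline of the Baker--Norine proof itself --- reduced divisors, the divisors $\nu_<$ associated to total orders (equivalently, acyclic orientations), the dichotomy that exactly one of $|D|$ and $|\nu_<-D|$ is nonempty, and the bookkeeping identity $d(-F)=d(F)+\deg F$ that converts the dichotomy into Riemann--Roch. So there is nothing to compare: you have reproduced the proof the paper is citing, and the details you give (the minimizer argument for $|\nu_<|=\emptyset$, and reading $D_0\leqslant\nu_<$ off the Dhar burning order) are correct.
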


Not long after, such an analogy was extended to metric graphs and
tropical curves by Gathmann and Kerber \cite{GK07}, by Hladk\'{y},
Kr\'{a}l' and Norine \cite{HKN07}, and by Mikhalkin and Zharkov
\cite{MZ07}. For a metric graph (or a tropical curve) $\Gamma$, we may also define the \emph{canonical divisor} on $\Gamma$ to be the divisor $K$ given by $K=\sum_{v\in \Gamma}(\text{deg}(v)-2)(v)$.

\begin{thm}[Riemann-Roch thoerem for metric graphs and tropical curves] \label{T:RR-metricgraph}
Let $\Gamma$ be a metric graph (or a tropical curve) of genus $g$ and $K$ the canonical divisor on $\Gamma$. Then for all $D\in\emph{Div}\Gamma$, we have
\begin{equation*}
r_\Gamma(D)-r_\Gamma(K-D)=\emph{deg}(D)+1-g.
\end{equation*}
\end{thm}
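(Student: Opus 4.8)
The plan is to prove Theorem~\ref{T:RR-metricgraph} by transporting Baker--Norine's combinatorial proof of Theorem~\ref{T:RR-graph} to the metric setting; essentially all of the work goes into building, for a metric graph $\Gamma$, a workable theory of $v_0$-reduced divisors, which is precisely what the reduction algorithm advertised in the abstract is for. Fix a point $v_0\in\Gamma$ and call a divisor $D$ \emph{$v_0$-reduced} if $D\geq 0$ on $\Gamma\setminus\{v_0\}$ and no closed connected subset $\Gamma'\subseteq\Gamma$ with $v_0\notin\Gamma'$ can be ``fired'', i.e.\ for every such $\Gamma'$ some boundary point $x\in\partial\Gamma'$ satisfies $D(x)<$ (the number of segments of $\Gamma$ leaving $\Gamma'$ at $x$). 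The first block of the proof is to show, by running the reduction algorithm on an arbitrary effective representative: (a) every divisor is linearly equivalent to a unique $v_0$-reduced divisor $D_{v_0}$, which in addition has finite support; (b) $|D|\neq\emptyset$ if and only if $D_{v_0}$ is effective, equivalently $D_{v_0}(v_0)\geq 0$; and (c) the rank formula $r_\Gamma(D)=\min_{v_0\in\Gamma} D_{v_0}(v_0)$, the minimum being attained on a finite subset of $\Gamma$. Step (c) follows from (b) by induction on the rank together with the elementary inequality $r_{v_0}(D-(w))\geq r_{v_0}(D)-1$.

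The second block is the construction of the combinatorial gadget. Choose a vertex set $\Omega\ni v_0$ of $\Gamma$ large enough to carry all the supports in play, and let $G=(\Omega,E(\Omega))$ be the associated finite graph. For an acyclic orientation $\mathcal{O}$ of $G$ with $v_0$ its unique source, set $\nu_{\mathcal{O}}=\sum_{v\in\Omega}(\operatorname{indeg}_{\mathcal{O}}(v)-1)(v)$; then $\deg\nu_{\mathcal{O}}=\#E(\Omega)-\#\Omega=g-1$, and (taking any source of the subgraph induced on a prospective fireable set as a witness) $\nu_{\mathcal{O}}$ is $v_0$-reduced with $\nu_{\mathcal{O}}(v_0)=-1$, so $r(\nu_{\mathcal{O}})=-1$ by (b). The key lemma, proved by the metric version of Dhar's burning algorithm, is the dichotomy: for every divisor $D$, exactly one of the following holds --- either $|D|\neq\emptyset$, or there is an acyclic orientation $\mathcal{O}$ of $G$ with unique source $v_0$ such that $|\nu_{\mathcal{O}}-D|\neq\emptyset$. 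Indeed, if $|D|=\emptyset$ then $D_{v_0}(v_0)\leq-1$; burning $\Gamma$ from $v_0$ on $D_{v_0}$ ignites all of $\Gamma$ precisely because $D_{v_0}$ is reduced, and the resulting orientation $\mathcal{O}$ satisfies $\operatorname{indeg}_{\mathcal{O}}(v)\geq D_{v_0}(v)+1$ for $v\neq v_0$, whence $\nu_{\mathcal{O}}-D_{v_0}\geq 0$; exclusivity is immediate from $r(\nu_{\mathcal{O}})=-1$.

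The final block deduces the Riemann--Roch identity from (a)--(c) and the dichotomy: this is the purely formal part of the Baker--Norine argument, phrased entirely in terms of reduced divisors and acyclic orientations, and it transcribes to metric graphs with no essential change. The one algebraic fact that links the two sides is that for an acyclic orientation $\mathcal{O}$ and its reversal $\mathcal{O}^{\mathrm{op}}$ one has $\nu_{\mathcal{O}}+\nu_{\mathcal{O}^{\mathrm{op}}}=\sum_{v\in\Omega}(\operatorname{indeg}_{\mathcal{O}}(v)+\operatorname{outdeg}_{\mathcal{O}}(v)-2)(v)=K$, so the ``$\nu$-side'' computations of $r(D)$ and of $r(K-D)$ fit together to yield $r(D)-r(K-D)=\deg(D)-(g-1)$. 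The same argument applies to tropical curves, the unbounded ends playing the role of a single additional point. (Alternatively, when all edge lengths of $\Gamma$ are rational one may rescale and subdivide to reduce directly to Theorem~\ref{T:RR-graph}, since subdivision introduces only degree-$2$ vertices and hence changes neither the genus nor $K$; handling irrational lengths this way would then require a separate continuity-in-edge-lengths argument, which the reduced-divisor approach above sidesteps.)

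The step I expect to be the real obstacle is the first block: making ``firing a set'' and Dhar's burning algorithm work on the continuum $\Gamma$ instead of on a finite vertex set, and thereby establishing existence, uniqueness and finiteness of support of $v_0$-reduced divisors for an \emph{arbitrary} metric graph and an \emph{arbitrary} divisor, not just one supported on vertices. Once that infrastructure is in place, the degree count for $\nu_{\mathcal{O}}$, the burning-algorithm proof of the dichotomy, and the formal Riemann--Roch manipulation are all faithful translations of the finite-graph case.
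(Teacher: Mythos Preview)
The paper does not prove Theorem~\ref{T:RR-metricgraph}. It is stated in the overview (Section~1.2) as a known result, with attribution to Gathmann--Kerber~\cite{GK07}, Hladk\'{y}--Kr\'{a}l'--Norine~\cite{HKN07}, and Mikhalkin--Zharkov~\cite{MZ07}, and is then used as background throughout. The paper's own contribution is the theory of rank-determining sets (Theorems~\ref{T:vertex}--\ref{T:homeo}); the reduced-divisor machinery developed in Section~\ref{S:1} is built to serve \emph{that} purpose, not to reprove Riemann--Roch. So there is no ``paper's own proof'' of this statement to compare your proposal against.

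That said, your outline is broadly in the spirit of the cited references, particularly~\cite{HKN07} and~\cite{MZ07}, which do proceed via a metric theory of $v_0$-reduced divisors and a Baker--Norine-style dichotomy. One caution on your sketch: item~(c), the formula $r_\Gamma(D)=\min_{v_0\in\Gamma} D_{v_0}(v_0)$, and the ``elementary'' inequality $(D-(w))_{v_0}(v_0)\geqslant D_{v_0}(v_0)-1$ you invoke to derive it, are not the route those papers take and are not obviously true a~priori. In the finite-graph proof the rank is controlled not by $\min_{v_0} D_{v_0}(v_0)$ but by $\min_{\nu\in\mathcal{N}}\deg^+(D-\nu)-1$ over the orientation divisors~$\nu$, and the formal Riemann--Roch step uses that characterization together with $\nu_{\mathcal{O}}+\nu_{\mathcal{O}^{\mathrm{op}}}=K$. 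Your blocks two and three already contain exactly these ingredients, so (c) is in fact unnecessary for the argument; if you retain it, you should either supply a proof of the inequality (it is not elementary in the case $D_{v_0}(w)=0$) or note that the actual Riemann--Roch deduction bypasses it.
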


The following theorem, conjectured by Baker and proved by
Hladk\'{y}, Kr\'{a}l' and Norine \cite{HKN07}, states another
important property about rank of divisors. For a graph $G$, by
assigning all edges length $1$, we obtain a metric graph
\emph{corresponding to} $G$.

\begin{thm} \label{T:graph}
Let $\Gamma$ be the metric graph corresponding to a graph $G$. Let
$D$ be a divisor on $G$. Let $r_G(D)$ be the rank of $D$ on $G$, and
$r_\Gamma(D)$ the rank of $D$ on $\Gamma$. Then we have
$r_G(D)=r_\Gamma(D)$.
\end{thm}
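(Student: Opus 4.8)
The plan is to reduce Theorem~\ref{T:graph} to a comparison of $v_0$-reduced divisors in the two settings, combined with the identity between $r_\Gamma(D)$ and the $V(G)$-restricted rank of $D$ (the ``initial step'' announced in the introduction, which says that $r_\Gamma(D)$ can be computed by subtracting only effective divisors supported on $V(G)$). Concretely, I fix a vertex $v_0\in V(G)$ and an integer $s\geq 0$, and let $E$ run over effective divisors of degree $s$ supported on $V(G)$. For each such $E$ the divisor $D-E$ is supported on $V(G)$, so --- granting the comparison below --- $|D-E|_G\neq\emptyset$ exactly when $|D-E|_\Gamma\neq\emptyset$, both being equivalent to effectivity of their common $v_0$-reduced divisor. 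Hence $r_G(D)\geq s \iff r_{V(G)}(D)\geq s \iff r_\Gamma(D)\geq s$; since this holds for every $s$ (and with $s=0$, $E=0$ it also gives $r_G(D)=-1\iff r_\Gamma(D)=-1$), we conclude $r_G(D)=r_\Gamma(D)$.

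The key comparison is the statement: \emph{if $D'\in\mathrm{Div}\,\Gamma$ is supported on $V(G)$, then the $v_0$-reduced divisor of $D'$ on the finite graph $G$ is also the $v_0$-reduced divisor of $D'$ on $\Gamma$.} Write $\tilde D'$ for the former. Any $f\colon V(G)\to\mathbb{Z}$ extends to a rational function on $\Gamma$ that is affine with integral slope on each length-$1$ edge and whose associated divisor is $-D_f$; hence linear equivalence of $V(G)$-supported divisors on $G$ persists on $\Gamma$, so $\tilde D'\sim_\Gamma D'$. By uniqueness of the $v_0$-reduced divisor within a linear equivalence class on $\Gamma$, it then suffices to check that $\tilde D'$ meets the metric reducedness conditions: it is effective off $v_0$ (immediate, as $\tilde D'$ is combinatorially $v_0$-reduced and vanishes outside $V(G)$), and for every closed connected $X\subseteq\Gamma\setminus\{v_0\}$ there is a point $x\in X$ with $\tilde D'(x)$ strictly less than the number of directions at $x$ leaving $X$. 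If $X$ has a boundary point $x$ in the interior of some edge, then $\tilde D'(x)=0$ while that number is $\geq 1$, so $x$ works. Otherwise every boundary point of $X$ is a vertex, and a short point-set argument --- any edge whose interior meets $X$ lies entirely in $X$, because $X\cap e^o$ is open and closed in the connected set $e^o$ --- shows $X$ is the metric realization of a connected subgraph $H\subseteq G$. Combinatorial $v_0$-reducedness of $\tilde D'$ applied to $S=V(H)$ yields $x\in V(H)$ with $\tilde D'(x)$ less than the number of edges of $G$ joining $x$ to $V(G)\setminus S$, and each such edge is a direction leaving $X$ at $x$, so $x$ is again a witness.

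An alternative, more computational proof of the comparison --- in the spirit of the reduction algorithm used elsewhere in this paper --- is to run that algorithm on a $V(G)$-supported effective representative: performed for unit time on a graph all of whose edges have length $1$, the continuous firings of subgraphs coincide with ordinary chip-firing moves on $G$, so the process stays on $V(G)$ and outputs $\tilde D'$.

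I expect the main obstacle to be the comparison lemma itself --- reconciling the combinatorial and metric formulations of $v_0$-reducedness --- the delicate points being the topology of closed connected subsets of $\Gamma$ and the bookkeeping of outgoing directions in the presence of parallel edges. The remaining ingredients are standard or are established elsewhere in the paper: existence and uniqueness of $v_0$-reduced divisors and the criterion ``$|D|\neq\emptyset$ iff the $v_0$-reduced divisor is effective'' in each setting, the identity $r_\Gamma(D)=r_{V(G)}(D)$, and the persistence of linear equivalence when passing from $G$ to $\Gamma$.
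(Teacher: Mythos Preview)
Your proposal is correct and matches the route the paper itself takes, though the paper is quite terse about it. In the paper, Theorem~\ref{T:graph} is attributed to \cite{HKN07} and is not given an independent proof; the paper's own contribution is Theorem~\ref{T:vertex} (any vertex set is rank-determining), and it simply remarks that ``it is easy to see that Theorem~\ref{T:vertex} generalizes Theorem~\ref{T:graph}.'' Your argument is precisely that generalization made explicit: invoke Theorem~\ref{T:vertex} to get $r_\Gamma(D)=r_{V(G)}(D)$, and then establish $r_{V(G)}(D)=r_G(D)$ by showing that for a vertex-supported divisor the combinatorial $v_0$-reduced representative on $G$ is also the metric $v_0$-reduced representative on $\Gamma$, so that $|D-E|_G\neq\emptyset\iff|D-E|_\Gamma\neq\emptyset$ for every $E\in\mathrm{Div}_+^s V(G)$.

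The comparison lemma you supply is the part the paper suppresses, and your proof of it is sound: the case where $\partial X$ meets an edge interior is immediate, and when $\partial X\subseteq V(G)$ your connectedness argument on $e^o$ correctly identifies $X$ with (the metric realization of) a subgraph, after which the combinatorial non-saturation condition transfers directly, multi-edges and all. The auxiliary facts you list as ``standard or established elsewhere in the paper'' --- existence/uniqueness of reduced divisors (Proposition~\ref{P:1:1}, Theorem~\ref{T:unique}), the effectivity criterion for $|D|\neq\emptyset$, and persistence of linear equivalence from $G$ to $\Gamma$ --- are indeed all available. Your alternative computational observation (that Algorithm~\ref{A:main} on a unit-edge-length graph with vertex-supported input performs ordinary chip-firing) is also a legitimate shortcut in the spirit of Section~\ref{S:1}.
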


We introduce a new notion of rank here.
\begin{dfn}
Let $\Gamma$ be a metric graph and $A$ a nonempty subset of
$\Gamma$.
\begin{compactenum}[(i)]
\item Define the \emph{$A$-restricted rank} $r_A(D)$ of a divisor $D\in\text{Div}\Gamma$ by $r_A(D)=-1$ if $|D|=\emptyset$, and $r_A(D)\geqslant
s\geqslant 0$ if and only if $|D-E|\neq \emptyset$ for all
$E\in\text{Div}_+^sA$.
\item $A$ is said to be a \emph{rank-determining
set} of $\Gamma$, if it holds for every divisor $D\in\text{Div}\Gamma$ that
$r(D)=r_A(D)$.
\end{compactenum}
\end{dfn}

One may also call $r_A(D)$ the rank of $D$ restricted on $A$. Clearly, $\Gamma$ itself is a rank-determining set of $\Gamma$ and
we say it is \emph{trivial}. It is natural to ask if there exist
nontrivial rank-determining sets, or more ambitiously, finite ones?
One of the main results of this paper is the following theorem,
which gives an affirmative answer.

\begin{thm} \label{T:vertex}
Let $\Omega$ be a vertex set of a metric graph $\Gamma$. Then
$\Omega$ is a rank-determining set of $\Gamma$.
\end{thm}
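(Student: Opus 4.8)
One inequality is free: since $\text{Div}_+^s\Omega\subseteq\text{Div}_+^s\Gamma$, the requirement ``$|D-E|\ne\emptyset$ for all $E\in\text{Div}_+^s\Gamma$'' defining $r(D)\ge s$ is stronger than the one defining $r_\Omega(D)\ge s$, so $r(D)\le r_\Omega(D)$ for every $D$. The theorem is therefore the implication $r_\Omega(D)\ge s\Rightarrow r(D)\ge s$ for all $D\in\text{Div}\,\Gamma$ and all $s\ge0$. The first step is to strip off $s$: one checks directly from the definitions that, for $s\ge1$,
\[
r(D)\ge s\iff r(D-P)\ge s-1\ \text{ for all }P\in\Gamma,
\]
\[
r_\Omega(D)\ge s\iff r_\Omega(D-w)\ge s-1\ \text{ for all }w\in\Omega
\]
(both also encode $|D|\ne\emptyset$, while the case $s=0$ is just $|D|\ne\emptyset\iff r(D)\ge0$). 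Granting these recursions and inducting on $s$, the theorem reduces to its case $s=1$, the \emph{Key Lemma}: if $|D-w|\ne\emptyset$ for every vertex $w\in\Omega$, then $|D-P|\ne\emptyset$ for every point $P\in\Gamma$. (In the inductive step it suffices to prove $r(D-P)\ge s-1$ for each $P\in\Gamma$: when $P\in\Omega$ this is immediate from the second recursion and the inductive hypothesis; when $P\in\Gamma\setminus\Omega$, for every $E\in\text{Div}_+^{s-1}\Omega$ the hypothesis $r_\Omega(D)\ge s$ gives $|(D-E)-w|\ne\emptyset$ for all $w\in\Omega$, so the Key Lemma yields $|(D-E)-P|\ne\emptyset$, whence $r_\Omega(D-P)\ge s-1$ and the inductive hypothesis applies once more.)

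To prove the Key Lemma I would use the theory of $v_0$-reduced divisors and Dhar's burning algorithm developed below. Assume $P\in\Gamma\setminus\Omega$, so that $P$ is interior to an edge $e$ with end-points $w_1,w_2\in\Omega$ and the two open halves $(P,w_1)$, $(P,w_2)$ of $e$ contain no vertices. Since $|D-w_1|\ne\emptyset$ we have $|D|\ne\emptyset$, so the $P$-reduced divisor $F\sim D$ is effective; and since $F-P$ is again $P$-reduced, $|D-P|\ne\emptyset$ is equivalent to $F(P)\ge1$. Suppose $F(P)=0$ and run the burning algorithm from $P$. Because $P$ meets the rest of $\Gamma$ only through $e$, the fire is forced to run along $e$ in both directions; examining where the two fronts can be stopped by points where $F$ is positive, and using that $F$ being $P$-reduced forces the fire to consume all of $\Gamma$, one finds that at least one front must sweep all the way to an end-point, say $w_1$, with $F\equiv0$ on $(P,w_1)$. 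If $F(w_1)=0$, then by monotonicity of burning the fire started at $w_1$ alone also consumes all of $\Gamma$, so $F$ is the $w_1$-reduced divisor equivalent to $D$; as $F(w_1)=0$ this gives $|D-w_1|=\emptyset$, contradicting the hypothesis. If instead $F(w_1)\ge1$, rerunning the analysis on the $w_2$-side shows that the only configuration compatible with $F$ being $P$-reduced forces $F$ to be the $w_2$-reduced divisor with $F(w_2)=0$, i.e. $|D-w_2|=\emptyset$ — again a contradiction. Hence $F(P)\ge1$, proving the Key Lemma.

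The heart of the matter, and the step I expect to be hardest, is the Key Lemma; its difficulty is structural rather than computational. One cannot slide a single point of the divisor from a vertex $w_1$ to an interior point $P$ of an incident edge by a principal divisor supported near that edge, because the value of a rational function at $w_1$ is tied to its values on the rest of $\Gamma$, so moving that point is a genuinely global operation. The proof therefore has to route around this through the $P$-reduced divisor and the burning algorithm, which encode the global constraint; the real work is (i) having the reduced-divisor machinery in place — existence and uniqueness of $v_0$-reduced divisors, the equivalence of $|D|\ne\emptyset$ with effectivity of the $v_0$-reduced divisor, Dhar's burning criterion, and monotonicity of the burning process — and (ii) the case analysis above on how a $P$-reduced divisor $F$ with $F(P)=0$ can behave near the edge containing $P$. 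The two recursions used in the reduction are elementary but should be stated with care, since they drive the induction.
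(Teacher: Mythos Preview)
Your proof is correct, and the reduction to $s=1$ is essentially the content of the paper's Proposition~3.1 (the equivalence of ``$r_A(D)\ge1\Rightarrow r(D)\ge1$'' with ``$A$ is rank-determining''), proved by the same peel-off-one-point induction. The Key Lemma, however, is handled differently. The paper introduces \emph{special open sets} (connected open $U$ such that every component of $U^c$ has a boundary point of out-degree $\ge2$), shows that a nontrivial special open set has closure of genus at least $1$ (Lemma~3.3), and deduces that whenever $\overline{U}$ is a tree one has $\overline{U}\subseteq\mathcal{L}(\partial U)$ (Proposition~3.7); applying this to each edge $[w_i,w_j]$ gives $\mathcal{L}(\Omega)=\Gamma$ in one line. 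Your burning argument is in effect a bare-hands proof of exactly this edge case: you are showing that $\mathcal{U}_{\mathrm{supp}\,F,\,P}$ cannot be an open sub-segment of $e$ (both boundary points would have out-degree $1$ and $F\ge1$, hence be saturated), so it must reach an endpoint $w_i$ with $F(w_i)=0$, whence $F$ is also $w_i$-reduced (the paper's Corollary~2.11(ii)). Your route is more elementary and self-contained for this single theorem; the paper's abstraction is what it reuses to prove Theorems~1.5 and~1.6 and the general criterion (Theorem~3.10). One small point worth tightening: the clause ``rerunning the analysis on the $w_2$-side'' in the case $F(w_1)\ge1$ actually covers three sub-cases --- support on $(P,w_2)$; $F\equiv0$ on $(P,w_2)$ with $F(w_2)\ge1$; and $F\equiv0$ on $(P,w_2]$ --- and in the first two the set $\mathcal{U}_{\mathrm{supp}\,F,\,P}$ is an open sub-segment of $e$ with both boundary points saturated, contradicting $P$-reducedness directly rather than forcing $w_2$-reducedness. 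All three sub-cases go through, but they deserve to be written out.
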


It is easy to see that Theorem~\ref{T:vertex} generalizes Theorem~\ref{T:graph} to all metric graphs $\Gamma$ and all divisors $D$ on
$\Gamma$. And since $\text{Div}_+^s\Omega$ is always a finite set,
this theorem also provides an algorithm for computing the rank of a
divisor on $\Gamma$.

There exist finite rank-determining sets other than vertex
sets. In particular, we will prove the following conjecture of
Baker.

\begin{thm} \label{T:g+1}
Let $\Gamma$ be a metric graph of genus $g$. Then there exists a
finite rank-determining set of cardinality $g+1$.
\end{thm}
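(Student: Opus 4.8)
Since $\text{Div}_+^sA\subseteq\text{Div}_+^s\Gamma$, one always has $r_A(D)\geqslant r(D)$, so the whole content of the theorem is the reverse inequality, and it suffices to exhibit a single $(g+1)$-point set that is rank-determining. The plan is to feed a judicious such set into the criterion for rank-determining sets established earlier: $A$ is rank-determining precisely when $A$ meets every special open subset of $\Gamma$. Recall (in the form most convenient here) that a special open set is, up to passing to connected components, a set $\Gamma\setminus W_D$ where $D$ is a divisor with $r(D)\geqslant 0$ and $W_D=\{v\in\Gamma:|D-(v)|\neq\emptyset\}$, that membership of a point in $W_D$ is decided by whether the $v$-reduced divisor equivalent to $D$ charges $v$, and that by peeling points off one at a time it is enough to consider $r(D)=0$ with $W_D\subsetneq\Gamma$. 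So everything reduces to producing $g+1$ points that are never all contained in such a $W_D$.

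For the construction, fix a vertex set $\Omega$ of $\Gamma$ and a spanning tree $T$ of the underlying combinatorial graph, and let $e_1,\dots,e_g$ be the $g$ co-tree edges, so that $\Gamma\setminus(e_1^o\cup\cdots\cup e_g^o)$ is the tree $T$ and every independent cycle of $\Gamma$ runs through exactly one $e_i$. Choose an interior point $p_i\in e_i^o$ for each $i$ and any vertex $v_0\in\Omega$, and set $A=\{v_0,p_1,\dots,p_g\}$, which has exactly $g+1$ elements. The geometric idea is that deleting $p_1,\dots,p_g$ already severs every cycle --- $\Gamma\setminus\{p_1,\dots,p_g\}$ deformation retracts onto $T$ --- so the $p_i$ are meant to pierce the ``cyclically large'' special open sets, while the extra base point $v_0$ is reserved for the special open sets whose complement $W_D$ is a small (ultimately finite) set.

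It remains to check that this $A$ meets every special open set $U=\Gamma\setminus W_D$, i.e.\ that $W_D\not\supseteq A$ whenever $W_D\subsetneq\Gamma$. I would split according to the size of $W_D$. If $W_D$ is finite, then every divisor in $|D|$ is supported on $W_D$, so $|D|$ is finite, hence (being connected) a single divisor $D_0$, whence $\#W_D=\#\operatorname{supp}(D_0)\leqslant\deg D\leqslant g<\#A$ because $r(D)=0$ forces $\deg D\leqslant g$; thus $A\not\subseteq W_D$. If $W_D$ is infinite, one invokes the structural description of $W_D$ coming from the reduced-divisor algorithm: since $r(D)=0$ rather than $\geqslant 1$, $W_D$ is (up to homotopy) a proper subgraph whose cycle space is strictly smaller than that of $\Gamma$; consequently some co-tree edge $e_i$ is not entirely swallowed by $W_D$, and --- upgrading ``$e_i\not\subseteq W_D$'' to ``$p_i\notin W_D$'' at the chosen interior point (or else falling back on $v_0\notin W_D$) --- one gets $A\cap U\neq\emptyset$. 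In all cases $A$ meets $U$, so by the criterion $A$ is rank-determining. The same argument applies verbatim to tropical curves, since the infinite ends sit at leaves and do not affect the genus; one simply keeps the $p_i$ off them.

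The step I expect to be the real obstacle is precisely the infinite case: pinning down the exact homological constraints on $W_D$ and, above all, passing from ``$W_D$ does not contain the whole edge $e_i$'' to ``$W_D$ omits the particular point $p_i$''. A special open set can be cofinite --- this already happens on a single circle --- so no naive count of connected components will work; one must track how the $v_0$-reduced divisor of $D$ evolves as the base point is dragged across a co-tree edge, which is exactly what the reduction algorithm underpinning the paper is designed to control.
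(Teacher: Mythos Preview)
Your construction of $A=\{v_0,p_1,\dots,p_g\}$ and your reduction to the criterion ``$A$ meets every nonempty special open set'' are exactly what the paper does. The divergence is in the verification. You split on whether $W_D=\operatorname{supp}|D|$ is finite or infinite; the finite case leans on connectedness of $|D|$ (true, but not proved in this paper) together with a Riemann--Roch degree bound, and the infinite case you yourself flag as the real obstacle. That flag is accurate: the assertion that ``$r(D)=0$ forces the cycle space of $W_D$ to be strictly smaller than that of $\Gamma$'' is neither proved nor obviously true in the form you need, and even granting it, the passage from ``$e_i\not\subseteq W_D$'' to ``$p_i\notin W_D$'' is precisely the hard step. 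A special open set can be cofinite, so its complement can be a finite set of degree-$2$ points that happens to sit on the co-tree edges away from your chosen $p_i$; homological bookkeeping on $W_D$ alone does not see the specific points.

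The paper avoids this entirely by arguing on the special open set $U'$ directly rather than on its complement $W_D$. The relevant input is not Riemann--Roch but the bare definition: every component of $(U')^c$ has a boundary point of out-degree at least~$2$. Combined with the spanning-tree structure, this forces $U'$ to contain an \emph{entire} co-tree segment $e_i$ --- not merely to meet it --- and hence to contain the chosen $p_i$. Concretely (Proposition~\ref{P:2:3}): if $U'\cap T$ is disconnected then some $e_i$ must lie in $U'$ to keep $U'$ connected; if $U'\cap T$ is connected, then any boundary point $v$ of out-degree $\geqslant 2$ sends at most one outgoing segment into the tree (else $T$ would contain a cycle), so at least one outgoing segment lies on some $e_i$, and following that segment inside $U'$ one finds all of $e_i\subseteq U'$. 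This two-case combinatorial argument is short and replaces both your finite and infinite cases at once; your dichotomy and the appeal to $\deg D\leqslant g$ are unnecessary.
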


Theorem~\ref{T:g+1} has a counterpart in the algebraic curve case,
as stated in the following theorem. (See
Remark~\ref{R:ac} for a sketch of the proof.)

\begin{thm}[R. Varley] \label{T:ac}
For a nonsingular projective algebraic curve $C$, any set of $g+1$
distinct points is a rank-determining set.
\end{thm}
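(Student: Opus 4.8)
The plan is to transport the strategy behind Theorem~\ref{T:vertex} to the algebraic setting, the role of $v_0$-reduced divisors being played by the base divisor of a linear system and the only real input being the Riemann--Roch theorem for curves. Write $A=\{p_1,\dots,p_{g+1}\}$ for the given set of distinct points. As usual set $r(D)=h^0(\mathcal{O}_C(D))-1$, with $r(D)=-1$ when $h^0(\mathcal{O}_C(D))=0$, and define $r_A(D)$ verbatim as in the metric-graph case, $\text{Div}_+^sA$ denoting the effective divisors of degree $s$ supported on $A$. Since $\text{Div}_+^sA\subseteq\text{Div}_+^sC$, the inequality $r_A(D)\geq r(D)$ is immediate; the content is the reverse inequality $r_A(D)\leq r(D)$. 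Moreover if $|D|=\emptyset$ then $r(D)=r_A(D)=-1$ by definition, so we may assume $r:=r(D)\geq 0$.

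The key lemma is: on a smooth projective curve of genus $g$, the base locus of any nonempty complete linear system $|F|$ has at most $g$ points. To see this, let $B$ be the base (fixed) divisor of $|F|$, so $|F|=B+|F-B|$ with $|F-B|$ base-point-free and $h^0(F-B)=h^0(F)=:k\geq 1$; then the base locus of $|F|$ equals $\text{supp}(B)$, which has at most $\deg B$ points. Two elementary estimates bound $\deg B$: first $\deg(F-B)\geq h^0(F-B)-1=k-1$ (the standard inequality $h^0(N)\leq\deg N+1$ for a divisor with a section — removing $k-1$ points from $N$ leaves $h^0$ positive, since each removal costs at most one); second, Riemann--Roch gives $\deg F=k-1+g-h^1(F)\leq k-1+g$. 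Subtracting, $\deg B=\deg F-\deg(F-B)\leq g$.

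Granting the lemma, suppose $h^0(D)=r+1$ with $r\geq 0$, and choose $q_1,\dots,q_{r+1}\in A$ greedily so that $h^0(D-q_1-\cdots-q_j)=r+1-j$ for $0\leq j\leq r+1$. Indeed, having chosen $q_1,\dots,q_{j-1}$ with $j\leq r+1$, the system $|D-q_1-\cdots-q_{j-1}|$ is nonempty, its $h^0$ being $r+2-j\geq 1$; by the lemma its base locus misses some point of $A$, and taking $q_j$ to be such a point drops $h^0$ by exactly one. After $r+1$ steps, $E:=q_1+\cdots+q_{r+1}$ lies in $\text{Div}_+^{\,r+1}A$ and $h^0(D-E)=0$, i.e.\ $|D-E|=\emptyset$; padding $E$ with further points of $A$ if a larger degree is required, this shows $r_A(D)<r+1$, so $r_A(D)\leq r(D)$, and together with the trivial inequality we get $r_A(D)=r(D)$ for all $D$.

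I expect the main obstacle to be isolating the right lemma — and, with it, seeing that the bound $g$ on the size of base loci is precisely what forces $g+1$ points to suffice; after that the argument is short, the remaining points (that the greedy selection never exhausts $A$, and that the curve-theoretic $r$ and $r_A$ agree with the divisor-restriction formalism of the metric-graph sections) being routine. One should also observe that $g+1$ is optimal: a general effective divisor $F$ of degree $g$ is non-special with $h^0(F)=1$, so its base locus is its support, a set of $g$ points that is not rank-determining for $F$ itself.
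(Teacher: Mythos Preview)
Your proof is correct and follows essentially the same route as the paper's (Varley's) argument in Remark~\ref{R:ac}: both establish via Riemann--Roch that the base divisor of any nonempty complete linear system has degree at most $g$, then iteratively pick points of $A$ outside the current base locus to drop the rank step by step. The only cosmetic difference is in how the bound $\deg B\le g$ is derived---you bound $\deg F$ from above and $\deg(F-B)$ from below, whereas the paper applies Riemann--Roch directly to $B$ to show $\deg B\ge g+1$ would force $r(B)\ge 1$, contradicting $|B|=\{B\}$---but the two arguments are equivalent.
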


It is clear that the equivalence relation among divisors on
$\Gamma$ changes if we use a different metric. However,
rank-determining sets will not be affected, even though their
definition uses the notion of linear systems on $\Gamma$.

\begin{thm} \label{T:homeo}
Rank-determining sets are preserved under homeomorphisms.
\end{thm}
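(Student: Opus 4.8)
The plan is to use the rank-determining criterion established in the body of the paper (I sketch its proof below for completeness), and then to locate precisely where the metric enters and to check that it does not enter at the level of that criterion.

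First I would record that for a nonempty $A\subseteq\Gamma$ one always has $r_\Gamma(D)\le r_A(D)$, so $A$ is rank-determining precisely when the reverse inequality holds for all $D$, and the only way it can fail is to have $r_\Gamma(D)=s\ge 0$ while $r_A(D)\ge s+1$. For an effective divisor $D$ set $\mathcal{Z}_\Gamma(D):=\{v\in\Gamma:|D-(v)|=\emptyset\}$, equivalently $\Gamma\setminus\bigcup_{0\le D'\sim D}\operatorname{supp}(D')$. The criterion is: $A$ is rank-determining if and only if $A\cap\mathcal{Z}_\Gamma(D)\ne\emptyset$ for every effective $D$ with $\mathcal{Z}_\Gamma(D)\ne\emptyset$. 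One direction is immediate; for the other, suppose $r_\Gamma(D)=s$ but $r_A(D)\ge s+1$ and choose $E=(q_1)+\dots+(q_{s+1})\ge 0$ with $|D-E|=\emptyset$. Since $r_\Gamma(D)\ge s$ there is an effective $D_1\sim D-(q_2)-\dots-(q_{s+1})$, and $|D_1-(q_1)|=|D-E|=\emptyset$, so $q_1\in\mathcal{Z}_\Gamma(D_1)\ne\emptyset$ and the hypothesis applied to $D_1$ replaces $q_1$ by a point of $A$; iterating this replacement $s+1$ times converts $E$ into some $E'\in\operatorname{Div}_+^{s+1}A$ with $|D-E'|=\emptyset$, contradicting $r_A(D)\ge s+1$. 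The nonempty sets $\mathcal{Z}_\Gamma(D)$ are the \emph{special open sets} of $\Gamma$, so the criterion reads: $A$ is rank-determining iff $A$ meets every special open set of $\Gamma$.

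Next I would reduce the homeomorphism statement to a rescaling statement. Given a homeomorphism $\phi\colon\Gamma\to\Gamma'$, pulling the metric of $\Gamma'$ back along $\phi$ turns the underlying space of $\Gamma$ into a metric graph $\widehat\Gamma$ for which $\phi\colon\widehat\Gamma\to\Gamma'$ is an isomorphism of metric graphs; such an isomorphism obviously carries rank-determining sets to rank-determining sets, so the theorem is equivalent to the claim that whether a subset of a fixed topological graph is rank-determining does not depend on the assignment of edge lengths. By the criterion this amounts to showing that the family of special open sets of $\Gamma$ is unchanged when the metric is rescaled. Here I would invoke the combinatorial picture of special open sets coming from the analysis of the $v_0$-reduced divisor algorithm: after fixing a vertex set (enlarged to contain all points of valence $\neq 2$) one checks that whether a point lies in $\mathcal{Z}_\Gamma(D)$, and hence the set $\mathcal{Z}_\Gamma(D)$ itself, is dictated by incidence data — which open edges the set contains in full, and, along its boundary, the incoming directions and the genus of each complementary region — so that being a special open set is equivalent to matching a combinatorial profile that a homeomorphism cannot disturb. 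Consequently $\phi$ induces a bijection between the special open sets of $\Gamma$ and those of $\Gamma'$, whence $A$ meets all of the former if and only if $\phi(A)$ meets all of the latter, which is the theorem.

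The main obstacle is exactly the metric-independence of the family of special open sets. A single $\mathcal{Z}_\Gamma(D)$ really does change with the metric — already on a circle of circumference $\ell$, rescaling alters which degree-zero divisors are principal and hence alters the linear systems $|D|$ — and yet the collection of all of the $\mathcal{Z}_\Gamma(D)$ does not; making this precise, through the structure of $v_0$-reduced divisors, is the step that carries all the weight. Once it is in hand, the chain above — criterion, then pullback of the metric, then homeomorphism-invariance of special open sets — yields Theorem~\ref{T:homeo} immediately.
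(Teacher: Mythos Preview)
Your approach is essentially the paper's, but you have arranged the work so that what you call ``the main obstacle'' is exactly the step the paper renders trivial. In the paper, a \emph{special open set} is \emph{defined} by a purely topological condition: a connected open $U\subseteq\Gamma$ is special if every connected component $X$ of $U^c$ has a boundary point $v$ with $\mathrm{outdeg}_X(v)\ge 2$. With this definition, the criterion (your first paragraph, which is a correct variant of the paper's Proposition~\ref{P:2:1} and Theorem~\ref{T:criterion}) says $A$ is rank-determining if and only if it meets every nonempty special open set; since out-degrees are manifestly topological invariants, Theorem~\ref{T:homeo} is then a one-line observation. The genuine work lies not in showing that the family $\{\mathcal{Z}_\Gamma(D)\}$ is metric-independent, but in proving the criterion in its topological form --- in particular, that for every topologically-defined special open set $U$ there \emph{exists} a divisor $D$ with $\mathrm{supp}|D|=U^c$ (Proposition~\ref{P:2:4}), and conversely that the components of each $(\mathrm{supp}|D|)^c$ are special (Lemma~\ref{L:2:5}). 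You have absorbed this equivalence into your vague ``combinatorial profile'' step; once you write down the out-degree condition explicitly, your obstacle dissolves and your argument coincides with the paper's.
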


In Section~\ref{S:1}, we present an algorithm for computing the
$v_0$-reduced divisor linearly equivalent to a given effective
divisor on $\Gamma$. In Section~\ref{S:2}, we investigate properties
of rank-determining sets based on this algorithm, which are
generalized into a subtle criterion for rank-determining sets, from
which Theorem~\ref{T:vertex}, \ref{T:g+1} and \ref{T:homeo} easily
follow. We also explore several concrete examples as applications of
the criterion.

\bigskip
{\sc Acknowledgments:}
I would most of all like to thank Matthew Baker for introducing me to
this topic, for his encouragement of further study on general
properties of rank-determining sets, and for many valuable
discussions. Dr. Baker also helped me simplify the proof of Theorem~\ref{T:tmt} and gave detailed comments on the draft. Thanks to Robert Varley for providing his proof of Theorem~\ref{T:ac} in
Remark~\ref{R:ac}. I would also like to thank Serguei Norine for helpful
discussions, and Josephine Yu, Robin Thomas, Prasad Tetali and Farbod Shokrieh for their comments.

\section{From effective divisors to reduced ones} \label{S:1}
\subsection{Reduced divisors}
The notion of \emph{reduced divisors} was adopted
in \cite{BN07} as an important tool in the proof of the Riemann-Roch
theorem for finite graphs. The definition of reduced divisors on finite graphs is based on
the notion of \emph{$G$-parking functions} \cite{PS04}.

Let $G$ be a finite graph. For $A\subseteq V(G)$ and $v\in A$, the
\emph{out-degree} of $v$ from $A$, denoted by $\text{outdeg}_A(v)$,
is defined as the number of edges of $G$ with one end at $v$ and the
other end in $V(G)\setminus A$. Choose a vertex $v_0$. We say a
function $f:V(G)\setminus \{v_0\}\rightarrow \mathbb{Z}$ is a
\emph{$G$-parking function} based at $v_0$ if
\begin{compactenum}[(i)]
\item $f(v)\geqslant 0$ for all $v\in V(G)\setminus \{v_0\}$, and
\item every nonempty subset $A$ of $V(G)\setminus \{v_0\}$ contains a vertex
$v$ such that $f(v)<\text{outdeg}_A(v)$.
\end{compactenum}

A divisor $D\in\text{Div}(G)$ is called \emph{$v_0$-reduced} if the
map $v\mapsto D(v)$ restricted on $V(G)\setminus \{v_0\}$ is a
$G$-parking function based at $v_0$. An important property of
reduced divisors is stated in the following proposition.

\begin{prop}[See Proposition~3.1 in \cite{BN07}] \label{P:1:1}
If we fix a base vertex $v_0\in V(G)$, then for every
$D\in\emph{Div}G$, there exists a unique $v_0$-reduced divisor
$D'\in\emph{Div}G$ such that $D'\sim D$.
\end{prop}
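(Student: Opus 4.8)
The plan is to establish uniqueness and existence separately, the former being the more routine of the two.

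For \emph{uniqueness}, suppose $D_1$ and $D_2$ are both $v_0$-reduced with $D_1\sim D_2$, so that $D_1-D_2=D_f$ for some $f\colon V(G)\to\mathbb{Z}$. Let $A=\{v\in V(G):f(v)=\max f\}$. If $v_0\notin A$, then $A$ is a nonempty subset of $V(G)\setminus\{v_0\}$, so the $G$-parking condition for $D_1$ yields a vertex $v\in A$ with $D_1(v)<\text{outdeg}_A(v)$; but for this $v$ every edge $e=wv$ with $w\notin A$ satisfies $f(v)-f(w)\geqslant 1$ while every edge with $w\in A$ contributes $0$, so $D_f(v)\geqslant\text{outdeg}_A(v)$ and hence $D_2(v)=D_1(v)-D_f(v)<0$, contradicting that $D_2$ is $v_0$-reduced. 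Thus $f$ attains its maximum at $v_0$; applying the same argument to $D_2-D_1=D_{-f}$ shows $f$ also attains its minimum at $v_0$. Hence $f$ is constant, $D_f=0$, and $D_1=D_2$.

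For \emph{existence}, the first step is to reduce to divisors that are nonnegative away from $v_0$ by showing every class contains such a representative. Since $G$ is connected, the map $f\mapsto D_f$ has kernel the constant functions, so over $\mathbb{Q}$ its image is the full hyperplane of degree-zero elements of $\mathbb{Q}^{V(G)}$; in particular for each $v\neq v_0$ there is a rational $h$ with $D_h=(v)-(v_0)$, and clearing denominators gives an integral $f$ with $D_f=N\big((v)-(v_0)\big)$ for some $N\geqslant 1$, whence $-(v)\sim(N-1)(v)-N(v_0)$, which is nonnegative off $v_0$. Decomposing an arbitrary $D$ into its positive part, its negative part away from $v_0$, and its coefficient at $v_0$, and replacing each $-(v)$ occurring in the negative part by such a representative, produces $D'\sim D$ with $D'(v)\geqslant 0$ for all $v\neq v_0$.

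Now let $\mathcal{S}$ be the (nonempty) set of all $D'\sim D$ with $D'(v)\geqslant 0$ for every $v\neq v_0$, and for $D'\in\mathcal{S}$ set $c_k(D')=\sum_{\text{dist}(v_0,w)\leqslant k}D'(w)$. The omitted terms are nonnegative, so $c_k(D')\leqslant\deg D$ uniformly, and hence one may choose $D^*\in\mathcal{S}$ that lexicographically maximizes the tuple $(c_0(D^*),c_1(D^*),\dots)$. I claim $D^*$ is $v_0$-reduced. If not, there is a nonempty $A\subseteq V(G)\setminus\{v_0\}$ with $D^*(v)\geqslant\text{outdeg}_A(v)$ for all $v\in A$; firing $A$ gives $D^{**}=D^*-D_{\mathbf{1}_A}\sim D^*$, again in $\mathcal{S}$, obtained from $D^*$ by decreasing each $v\in A$ by $\text{outdeg}_A(v)$ and increasing each $w\notin A$ by the number of edges from $w$ into $A$. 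Induction on $k$ (trivial at $k=0$ since $A$ misses $v_0$) shows that the ball $B_k(v_0)$ is disjoint from $A$ and $c_k(D^{**})=c_k(D^*)$: given $B_k(v_0)\cap A=\emptyset$, every change to $D^*$ inside $B_k(v_0)$ is nonnegative, so $c_k(D^{**})\geqslant c_k(D^*)$, and lexicographic maximality of $D^*$ (with the earlier $c_j$ already tied) forces equality, which forces no edges between $B_k(v_0)$ and $A$ and hence $B_{k+1}(v_0)\cap A=\emptyset$. Taking $k$ larger than the diameter gives $A=\emptyset$, a contradiction, so $D^*$ is the desired $v_0$-reduced divisor.

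The \emph{main obstacle} is precisely the termination-type argument at the end of the existence proof: maximizing $D'(v_0)$ alone does not force reducedness, because chips can stay ``trapped'' at a bounded distance from $v_0$ while the coefficient at $v_0$ is unchanged, so one must use the finer lexicographic invariant (equivalently, one must prove that Dhar's burning algorithm, which repeatedly fires the unburnt set, terminates). The only other point requiring care is producing a representative that is nonnegative off $v_0$, where passing through $\mathbb{Q}$-coefficients of the Laplacian is what makes the argument work.
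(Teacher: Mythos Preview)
The paper does not supply a proof of this proposition at all: it is quoted verbatim from Baker--Norine \cite{BN07} with the attribution ``See Proposition~3.1 in \cite{BN07}'' and no argument is given in the present paper. So there is nothing to compare your attempt against here.

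That said, your proof is correct and self-contained. The uniqueness half is the standard maximum-level-set argument and matches the usual proof. For existence, your two-step approach (first pass to a representative nonnegative off $v_0$ via the $\mathbb{Q}$-surjectivity of the Laplacian onto degree-zero divisors, then lexicographically maximize the cumulative sums $c_k$ over distance-balls and show that any violating set $A$ would let you fire and strictly improve) is a clean variant of the potential-theoretic arguments in the literature. One small point worth making explicit is why the lexicographic maximum is attained: each $c_k$ is integer-valued and bounded above by $\deg D$, and $c_k=\deg D$ once $k$ exceeds the diameter, so the relevant tuple lives in a finite set and the supremum is realized. With that noted, the argument goes through.
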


Proposition~\ref{P:1:1} is quite useful when dealing with
equivalence classes of divisors, since we can select a reduced
divisor as a concrete representative for each equivalence class of
divisors.

The notion of reduced divisors has been extended to metric graphs by
several authors. In this paper, we adopt the definition of reduced
divisors on metric graphs as in \cite{HKN07}, which follows closely
the definition of reduced divisors on finite graphs as discussed
above. Other authors suggest to define reduced divisors on metric
graphs in more abstract ways \cite{B08}\cite{MZ07}, and it can be
proved that these definitions are all equivalent.

Let $\Gamma$ be a metric graph. If $X$ is a subset of $\Gamma$ with finitely many connected
components, we use $X^c$ to denote the complement of $X$ on
$\Gamma$, $\overline{X}$ the closure of $X$, $X^o$ the interior of
$X$, and $\partial X$ the set of boundary points of $X$. Note that
$\partial X=\partial X^c$. In addition, if $X$ is closed, then for
$v\in\partial X$, we define the \emph{out-degree} of $v$ from $X$,
denoted by $\text{outdeg}_X(v)$, to be the number of edges leaving
$X$ at $v$, or more precisely, the maximum number of internally
disjoint segments of $X^c$ with an open end at $v$. For $D\in
\text{Div}\Gamma$, we call a boundary point $v$ of $X$
\emph{saturated} with respect to $X$ and $D$  if $D(v)\geqslant
\text{outdeg}_X(v)$, and \emph{non-saturated} otherwise.

\begin{dfn}
Fix a base point $v_0\in \Gamma$. We say that a divisor $D$ is
$v_0$-\emph{reduced} if $D$ is non-negative on $\Gamma\setminus
v_0$, and every closed connected subset $X$ of $\Gamma\setminus v_0$
contains a non-saturated point $v\in
\partial X$.
\end{dfn}

As a counterpart of Proposition~\ref{P:1:1}, the following theorem
asserts the existence and uniqueness of a $v_0$-reduced divisor in
any equivalence class of $\text{Div}\Gamma$ \cite{HKN07}\cite{MZ07}.

\begin{thm} \label{T:unique}
Let $D$ be a divisor on a metric graph $\Gamma$. For any $v_0\in
\Gamma$, there exists a unique $v_0$-reduced divisor $D_{v_0}$ that
is linearly equivalent to $D$.
\end{thm}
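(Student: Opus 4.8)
The plan is to prove existence constructively, by a burning (Dhar‑type) algorithm on $\Gamma$, and uniqueness by a maximum‑principle argument; I expect termination of the algorithm to be the only genuine difficulty. First I would reduce to the case that $D$ is effective: by Theorem~\ref{T:RR-metricgraph}, once $\text{deg}(D)+k$ is large enough we have $r_\Gamma(D+k(v_0))\geqslant 0$, so $D+k(v_0)$ is linearly equivalent to some effective $E$; replacing $D$ by $E-k(v_0)$ we may assume $D\geqslant 0$ on $\Gamma\setminus v_0$, and since the conditions defining $v_0$‑reducedness never refer to $v_0$ and extra chips at $v_0$ never cause a violation, we may in fact assume $D$ is effective.

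For existence I would iterate the following. Given effective $D$, run a burning process: grow a closed connected ``burnt set'' $F\ni v_0$ in $\Gamma$, where fire travels at unit speed along segments and is blocked at a point $v\neq v_0$ as long as the number of already‑burnt directions at $v$ does not exceed $D(v)$. When the fire stabilizes there are two cases. If $F=\Gamma$, then $D$ is $v_0$‑reduced: were there a closed connected $X\subseteq\Gamma\setminus v_0$ with every boundary point saturated, the fire could never cross $\partial X$ into $X^{o}$ (that would require more than $D(v)\geqslant\text{outdeg}_X(v)$ burnt directions at some $v\in\partial X$), so $X^{o}$ would remain unburnt, contradicting $F=\Gamma$. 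If $F\neq\Gamma$, put $X=\overline{\Gamma\setminus F}$; then $X$ is closed, $v_0\notin X$ (a neighbourhood of $v_0$ burns), and the fire stalled at $\partial X$, which means precisely that every $v\in\partial X$ is saturated, $D(v)\geqslant\text{outdeg}_X(v)$. I then fire $X$: add to $D$ the principal divisor $(g_t)$, where $g_t$ is the truncated distance function equal to $0$ on $X$ and decreasing with slope $1$ as one moves into $F$; this pushes the chips on $\partial X$ off of $X$, and the saturation condition guarantees that $D_1:=D+(g_t)$ is again effective, with $D_1\sim D$. Now replace $D$ by $D_1$ and repeat.

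The main obstacle is showing that this loop terminates: bare monotonicity — for instance, that a potential such as $\sum_{x}D(x)\,\text{dist}(x,v_0)$ strictly decreases at each step — does not by itself suffice, since such a quantity ranges over a continuum. The resolution, which occupies the remainder of this section, is to fire at carefully chosen amounts $t$, governed by the first ``event'' at which the enlarged burnt set reaches a new point of a fixed finite set refining $\text{supp}(D)$ and the vertex set, and to show that the combinatorial type of the data (the burnt set together with the chip positions relative to that finite set) advances monotonically in a partial order with only finitely many states; this bounds the number of iterations, and at termination the characterization above shows the output is $v_0$‑reduced and linearly equivalent to $D$.

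For uniqueness, suppose $D'$ and $D''$ are $v_0$‑reduced with $D'-D''=(f)$ for a rational function $f$, and suppose $f$ is non‑constant. Replacing $f$ by $-f$ and swapping $D'$ with $D''$ if necessary, we may assume $M:=\max_\Gamma f>f(v_0)$. Let $X$ be a connected component of $f^{-1}(M)$; then $X$ is closed, connected, nonempty, $v_0\notin X$, and $X\neq\Gamma$, so $\partial X\neq\emptyset$. At any $v\in\partial X$, each segment of $X^{c}$ emanating from $v$ has strictly negative initial (integer) slope — a positive slope is impossible since $f\leqslant M$, and a zero slope would keep an initial portion of that segment inside $f^{-1}(M)$, hence inside $X$ — while each segment of $X$ emanating from $v$ contributes slope $0$; summing outgoing slopes gives $\text{ord}_vf\leqslant-\text{outdeg}_X(v)$, so $D'(v)\leqslant D''(v)-\text{outdeg}_X(v)$. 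Since $D''$ is $v_0$‑reduced, $X$ contains a non‑saturated boundary point $u$, i.e.\ $D''(u)<\text{outdeg}_X(u)$; then $D'(u)<0$ with $u\neq v_0$, contradicting that $D'$ is non‑negative on $\Gamma\setminus v_0$. Hence $f$ is constant and $D'=D''$. (When all edge lengths are rational one could alternatively obtain existence by subdividing and rescaling to a finite graph and invoking Proposition~\ref{P:1:1}, but the algorithmic argument is what is needed in general and what this section develops.)
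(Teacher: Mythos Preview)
The paper does not give a self-contained proof of Theorem~\ref{T:unique}; it cites \cite{HKN07} and \cite{MZ07}. What the paper \emph{does} do is develop an alternative constructive proof of the existence half in the remainder of Section~\ref{S:1}: Algorithm~\ref{A:main} iterates Dhar's algorithm (Algorithm~\ref{A:Dhar}) together with the firing moves of Definition~\ref{D:3}, and Theorem~\ref{T:tmt} establishes termination. Your existence outline---burn from $v_0$, fire the unburnt set, repeat---is exactly the architecture of Algorithm~\ref{A:main}.

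Where you diverge is the termination argument, and here your sketch has a gap. The paper proves termination by \emph{induction on $\deg(D)$}: Lemma~\ref{L:1:4} shows that $D^{(t)}(v_0)$ is non-decreasing and the number of vertices in the reachable open set $U(t)$ is eventually non-increasing; once both stabilize, one exhibits an edge emanating from a component of $U(t)^c$ that never becomes obstructed, drops a chip at its endpoint, and applies the inductive hypothesis. Your proposed finite-partial-order argument (``combinatorial type of the burnt set together with chip positions relative to a fixed finite set'') is natural but does not work as stated: after a move, the chips land at points determined by distances from the unburnt components to $\Omega\cup\{v_0\}$ (see Definition~\ref{D:3}), and these points need not lie on any finite set fixed in advance, so your state space is not a priori finite and strict advancement is not evident. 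The integer invariants of Lemma~\ref{L:1:4} are the correct monotone quantities, but even they do not by themselves force termination---once they stabilize the process can keep running---which is exactly why the induction on degree is needed on top of them.

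Two smaller points. Your reduction to the effective case via Theorem~\ref{T:RR-metricgraph} is potentially circular, since the known proofs of Riemann--Roch for metric graphs use reduced divisors; the paper's remark after Theorem~\ref{T:tmt} notes that \cite{HKN07} gives a direct algorithm to produce a divisor non-negative off $v_0$ without appealing to Riemann--Roch. Your uniqueness argument via the maximum locus of $f$ is correct and standard; the paper does not reproduce a proof of uniqueness.
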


For any finite subset $S$ of $\Gamma$, we denote by
$\mathcal{U}_{S,v_0}$ the maximal connected subset of $\Gamma$, such
that $v_0\in \mathcal{U}_{S,v_0}$ and $S\bigcap
\mathcal{U}_{S,v_0}=\emptyset$. In particular, if $v_0\in S$, then
$\mathcal{U}_{S,v_0}=\emptyset$. The set $\mathcal{U}_{S,v_0}$ can
be derived by taking the connected component of $S^c$ which contains
$v_0$. Note that $\mathcal{U}_{S,v_0}$ is connected and open, while
$\mathcal{U}_{S,v_0}^c$ is closed and might have several connected
components. We say that $S$ is \emph{$v_0$-minimal} if
$\mathcal{U}_{S,v_0}^c$ is connected and $S$ equals the set of
boundary points of $\mathcal{U}_{S,v_0}^c$.

Let $D$ be a divisor on $\Gamma$. Let $\textrm{supp}D=\{v\in\Gamma|D(v)\neq 0\}$ and $\textrm{supp}|D|=\bigcup_{D'\in |D|}\textrm{supp}D'$.
We call $\textrm{supp}D$ the \emph{support of $D$} and call $\textrm{supp}|D|$ the \emph{support of $|D|$}.

Assume now that $D$ is effective. To verify if $D$ is $v_0$-reduced,
we do not need to go through all closed connected subsets of
$\Gamma\setminus v_0$. The following lemma shows that we only need
to consider finitely many of them.

\begin{lem}\label{L:1:1}
Let $v_0$ be a point of $\Gamma$ and $D$ an effective divisor on
$\Gamma$. Then $D$ is $v_0$-reduced if and only if for any subset $S$ of
$\emph{supp}D \setminus v_0$, $\mathcal{U}_{S,v_0}^c$ contains a
non-saturated boundary point with respect to $D$.
\end{lem}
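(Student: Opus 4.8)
The plan is to prove the two implications separately. The forward implication ($v_0$-reduced $\Rightarrow$ the finite condition) is routine; essentially all the content lies in the converse, which I would prove by contraposition, the crux being a reduction of an arbitrary ``bad'' closed connected set to one of the finitely many sets $\mathcal{U}_{S,v_0}^c$ with $S\subseteq\text{supp}D\setminus v_0$.

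For the forward direction, assume $D$ is $v_0$-reduced and let $S$ be a nonempty subset of $\text{supp}D\setminus v_0$ (if $\text{supp}D\subseteq\{v_0\}$ then $D$ is trivially $v_0$-reduced, so the statement should be read with $S$ nonempty). Then $\mathcal{U}_{S,v_0}^c$ is closed, nonempty (it contains $S$), and disjoint from $v_0$; I would pick any connected component $X$ of it and apply the definition of reducedness to obtain a non-saturated $v\in\partial X$. Since $\mathcal{U}_{S,v_0}^c$ has only finitely many components, $X$ is a positive distance from the others, so in a small neighborhood of $v$ the complement of $X$ coincides with $\mathcal{U}_{S,v_0}$; this yields $v\in\partial\mathcal{U}_{S,v_0}^c$ and $\text{outdeg}_X(v)=\text{outdeg}_{\mathcal{U}_{S,v_0}^c}(v)$, so $v$ is the desired non-saturated boundary point of $\mathcal{U}_{S,v_0}^c$.

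For the converse I would argue contrapositively. If $D$ is effective but not $v_0$-reduced, effectiveness provides the non-negativity on $\Gamma\setminus v_0$, so there is a nonempty closed connected $X\subseteq\Gamma\setminus v_0$ every boundary point of which is saturated, i.e.\ $D(v)\geq\text{outdeg}_X(v)$ for all $v\in\partial X$. The key step is to replace $X$ by $U^c$, where $U$ is the connected component of $X^c$ containing $v_0$. I would check that $U^c$ is closed and connected (it consists of $X$ together with the remaining components of $X^c$, each abutting $X$ along $\partial X$), that $\partial U\subseteq\partial X$, and that $\text{outdeg}_{U^c}(v)\leq\text{outdeg}_X(v)$ for $v\in\partial U$ because $U\subseteq X^c$; hence every boundary point of $U^c$ is still saturated. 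Now the crucial observation: a saturated boundary point $v$ of $U^c$ has $D(v)\geq\text{outdeg}_{U^c}(v)\geq 1$, so $\partial U^c=\partial U\subseteq\text{supp}D$ and $v_0\notin\partial U$. Taking $S=\partial U$ (nonempty, and finite since $\text{supp}D$ is finite), I would verify that deleting $\partial U$ separates $U$ from the interior of $U^c$ — their closures meet only along $\partial U$ — whence $\mathcal{U}_{S,v_0}=U$ and $\mathcal{U}_{S,v_0}^c=U^c$. The hypothesis then forces a non-saturated boundary point of $U^c$, contradicting the saturation of all of $\partial U$. Thus $D$ is $v_0$-reduced.

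I expect the main obstacle to be the topological bookkeeping in this reduction: checking $\partial U\subseteq\partial X$, that out-degrees can only decrease as one passes to the larger complement so that saturation is inherited, that $U^c$ is connected, and that $U^c$ is precisely $\mathcal{U}_{\partial U,v_0}^c$ — all of which lean on the paper's standing assumption that the closed sets in play have finitely many components and finite boundary (in particular that each boundary point of $U^c$ has positive out-degree). None of it is deep, but it must be handled with care. The single conceptual point I would want to isolate first — and what actually makes the lemma true — is that a saturated boundary point necessarily carries a positive coefficient of $D$; this forces the boundary of any bad set into the finite set $\text{supp}D$, collapsing the infinitely many closed connected subsets of $\Gamma\setminus v_0$ into the finitely many candidates $\mathcal{U}_{S,v_0}^c$.
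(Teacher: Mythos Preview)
Your proposal is correct and follows essentially the same strategy as the paper's proof: both directions match, and in the converse you and the paper both argue by contraposition, using the two key observations that (a) saturated boundary points lie in $\text{supp}D\setminus v_0$, and (b) out-degrees can only decrease when passing from $X$ to a larger closed set containing it.

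The only difference is cosmetic: the paper takes $S=\partial X$ directly and uses the inclusion $X\subseteq\mathcal{U}_{\partial X,v_0}^c$ to compare out-degrees, whereas you first pass to $U^c$ (with $U$ the component of $X^c$ containing $v_0$) and take $S=\partial U$. In fact $\mathcal{U}_{\partial X,v_0}=U$, so the two choices of $S$ produce the same $\mathcal{U}_{S,v_0}^c$; the paper's route just spares you the verifications that $U^c$ is connected and that $\mathcal{U}_{\partial U,v_0}=U$.
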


\begin{proof}
First assume $D$ is $v_0$-reduced and consider a subset $S$ of
$\textrm{supp}D \setminus v_0$. Then $\mathcal{U}_{S,v_0}^c$ is a
closed subset of $\Gamma$ which has finitely many components. Apply
the defining property of $v_0$-reduced divisors to any of these
components, and we obtain non-saturated boundary points on each of
them.

Conversely, assume that for any subset $S$ of $\textrm{supp}D
\setminus v_0$, $\mathcal{U}_{S,v_0}^c$ contains a non-saturated
point. If $D$ is not $v_0$-reduced, then there exists a closed
connected subset $X$ of $\Gamma \setminus v_0$, such that every
point of $\partial X$ is saturated with respect to $X$ and $D$.
Clearly $\partial X\subseteq \textrm{supp}D \setminus v_0$. And
since $X\subseteq \mathcal{U}_{\partial X,v_0}^c$, the edges leaving
$\mathcal{U}_{\partial X,v_0}^c$ must also be edges leaving $X$.
Therefore, for every $v\in \partial \mathcal{U}_{\partial X,v_0}$,
we have
\begin{equation*}
D(v)\geqslant \textrm{outdeg}_X(v)\geqslant
\textrm{outdeg}_{\mathcal{U}_{\partial X,v_0}^c}(v).
\end{equation*}
This is equivalent to saying that $\mathcal{U}_{\partial X,v_0}^c$
contains no non-saturated boundary points, which contradicts our
assumption.
\end{proof}

Lemma~\ref{L:1:1} tells us that to determine if an effetive divisor
$D$ is $v_0$-reduced, it suffices to consider only the subsets of
$\text{supp}D \setminus v_0$. But the number of cases still grows
exponentially with respect to $\#\text{supp}D$. For finite graphs,
there is an elegant algorithm for verifying if a given function is a
$G$-parking function, which is adapted from an algorithm provided
by Dhar \cite{D90} in the context of sandpile models (see
\cite{CP05}). Here we extend Dhar's algorithm to metric graphs, as a consequence of
which we just need to test the points in $\text{supp}D \setminus
v_0$ one by one in order to judge whether an effective divisor $D$
is $v_0$-reduced.

\begin{algm}\label{A:Dhar}
(\textbf{Dhar's algorithm for metric graphs})\\
\textbf{Input:} An effective divisor $D\in \text{Div}_+\Gamma$, and a point $v_0\in\Gamma$.\\
\textbf{Output:} A subset $S$ of  $\text{supp}D\setminus v_0$.\\
Initially, set $S_0=\textrm{supp}D \setminus v_0$, and $k=0$.
\begin{compactenum}[(1)]
\item If $S_k=\emptyset$ or all the boundary points of $\mathcal{U}_{S_k,v_0}^c$
are saturated with respect to $D$, set $S=S_k$ and stop the
procedure.
\item Let $N_k$ be the set of all non-saturated boundary points of
$\mathcal{U}_{S_k,v_0}^c$. Set $S_{k+1}= S_k\setminus N_k$. Set
$k\leftarrow k+1$ and go to step (1).
\end{compactenum}
\end{algm}

\begin{lem} \label{L:1:2}
Run Dhar's algorithm for an effective divisor $D$ and a point $v_0$.
Then $D$ is $v_0$-reduced if and only if the output $S$ is empty.
\end{lem}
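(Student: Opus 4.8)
The plan is to show the two implications separately, using Lemma~\ref{L:1:1} as the bridge between ``$v_0$-reduced'' and the finitely many sets $\mathcal{U}_{S,v_0}^c$ that arise in the course of the algorithm. First I would verify the invariant that at every stage $k$ of the algorithm, the set $S_k$ is a subset of $\textrm{supp}D\setminus v_0$ and that $N_k$ consists precisely of those boundary points of $\mathcal{U}_{S_k,v_0}^c$ that are non-saturated with respect to $D$. Since $S_0$ is finite and each iteration removes the nonempty set $N_k$ (the algorithm only continues past step~(1) when $N_k\neq\emptyset$), the procedure terminates in at most $\#(\textrm{supp}D\setminus v_0)$ steps, so the output $S$ is well defined.

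For the ``if'' direction, suppose the output $S$ is empty. I claim $D$ is $v_0$-reduced, which by Lemma~\ref{L:1:1} amounts to checking that for every subset $S'$ of $\textrm{supp}D\setminus v_0$, the closed set $\mathcal{U}_{S',v_0}^c$ has a non-saturated boundary point. The key monotonicity observation is this: if $S'\subseteq S_k$, then $\mathcal{U}_{S_k,v_0}\subseteq\mathcal{U}_{S',v_0}$, hence $\mathcal{U}_{S',v_0}^c\subseteq\mathcal{U}_{S_k,v_0}^c$; moreover the non-saturated boundary points $N_k$ of $\mathcal{U}_{S_k,v_0}^c$ that lie in $S'$ remain non-saturated boundary points of the smaller set $\mathcal{U}_{S',v_0}^c$, because shrinking the region can only increase the out-degree at a point while $D(v)$ is unchanged. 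Now run the algorithm; since it ends with $S=\emptyset$, starting from $S'\subseteq S_0$ every point of $S'$ gets removed at some step, so at the first step $k$ at which some point $v$ of $S'$ is scheduled for removal we have $S'\subseteq S_k$ and $v\in N_k$, and then $v$ is a non-saturated boundary point of $\mathcal{U}_{S',v_0}^c$. (If $S'=\emptyset$ the condition in Lemma~\ref{L:1:1} is vacuous, or one uses that $\mathcal{U}_{\emptyset,v_0}=\Gamma$ so there is no boundary to saturate; in any case $D$ is trivially fine there.) Thus $D$ is $v_0$-reduced.

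For the ``only if'' direction, suppose the output $S$ is nonempty. Then the algorithm stopped via step~(1) with all boundary points of $\mathcal{U}_{S,v_0}^c$ saturated with respect to $D$. Since $S\subseteq\textrm{supp}D\setminus v_0$, Lemma~\ref{L:1:1} immediately gives that $D$ is \emph{not} $v_0$-reduced: we have exhibited a subset $S$ of $\textrm{supp}D\setminus v_0$ for which $\mathcal{U}_{S,v_0}^c$ contains no non-saturated boundary point. (One should also note $\mathcal{U}_{S,v_0}^c$ is nonempty, as $S\neq\emptyset$ forces $S\subseteq\mathcal{U}_{S,v_0}^c$, so the set $X$ contradicting reducedness is a genuine nonempty closed connected component of it.)

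The main obstacle is the bookkeeping in the ``if'' direction — specifically, making precise the claim that a non-saturated boundary point of $\mathcal{U}_{S_k,v_0}^c$ remains a non-saturated boundary point of $\mathcal{U}_{S',v_0}^c$ for arbitrary $S'\subseteq S_k$. This requires checking that $v\in N_k$ is still a boundary point of the smaller closed set (it is, since $v\in S'\subseteq\mathcal{U}_{S',v_0}^c$ but $v$ is adjacent to $\mathcal{U}_{S_k,v_0}\subseteq\mathcal{U}_{S',v_0}$) and that $\textrm{outdeg}_{\mathcal{U}_{S',v_0}^c}(v)\leqslant\textrm{outdeg}_{\mathcal{U}_{S_k,v_0}^c}(v)$ fails to be what we want — rather we need the reverse inequality on out-degrees, i.e.\ that passing to a \emph{larger} complement cannot decrease the out-degree; this is exactly the argument already used in the proof of Lemma~\ref{L:1:1}, where $X\subseteq\mathcal{U}_{\partial X,v_0}^c$ forced $\textrm{outdeg}_X(v)\geqslant\textrm{outdeg}_{\mathcal{U}_{\partial X,v_0}^c}(v)$. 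So the estimate is available; the care is just in tracking which set is larger at each step.
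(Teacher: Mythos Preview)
Your proof is correct and follows essentially the same argument as the paper's: both directions invoke Lemma~\ref{L:1:1}, and for the ``if'' direction you pick the first step $k$ at which $N_k$ meets $S'$, note $S'\subseteq S_k$, and use the out-degree monotonicity $\textrm{outdeg}_{\mathcal{U}_{S_k,v_0}^c}(v)\leqslant\textrm{outdeg}_{\mathcal{U}_{S',v_0}^c}(v)$ exactly as the paper does. One wording slip in your last paragraph: ``passing to a larger complement cannot decrease the out-degree'' is backwards --- larger closed sets have \emph{smaller} out-degree at a boundary point --- but the inequality you actually cite from Lemma~\ref{L:1:1} and use is the correct one, so this is cosmetic rather than a gap.
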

\begin{proof}
If $S$ is nonempty, then all the boundary points of
$\mathcal{U}_{S,v_0}^c$ are saturated. Thus $D$ is not $v_0$-reduced
by Lemma~\ref{L:1:1}.

Otherwise, $S=\emptyset$. For a subset $S'$ of $\text{supp}D
\setminus v_0$, let $N_k$ be such that $N_k\bigcap
S'\neq \emptyset$ and $N_{k'}\bigcap
S'= \emptyset$ for $k'<k$. Note that $S'\subseteq S_k$. If $v\in N_k\bigcap
S'$, then $v$ must be a non-saturated boundary point of
$\mathcal{U}_{S',v_0}^c$, since
\begin{equation*}
D(v)< \textrm{outdeg}_{\mathcal{U}_{S_k,v_0}^c}(v)\leqslant
\textrm{outdeg}_{\mathcal{U}_{S',v_0}^c}(v).
\end{equation*}
By Lemma~\ref{L:1:1}, $D$ is $v_0$-reduced.
\end{proof}

\begin{ex} \label{E:Dhar}
\begin{figure}[h]
\centering
\includegraphics[width=0.75\textwidth]{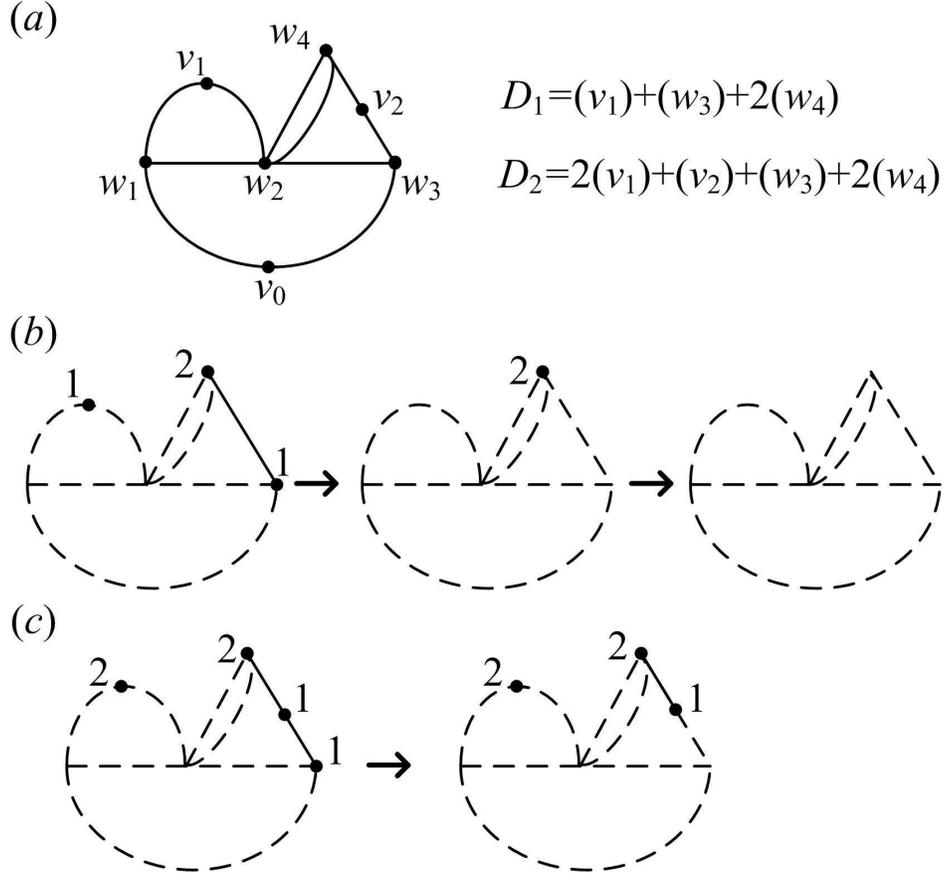}
\caption{(a) A metric graph $\Gamma$ and two effective divisors
$D_1$ and $D_2$ on $\Gamma$. (b) Dhar's algorithm for $D_1$ and
$v_0$. (c) Dhar's algorithm for $D_2$ and $v_0$.} \label{F:Dhar}
\end{figure}
Let $\Gamma$ be a metric graph as illustrated in Figure~\ref{F:Dhar}(a) with a vertex set $\{w_1,w_2,w_3,w_4\}$. Let
$D_1=(v_1)+(w_3)+2(w_4)$ and $D_2=2(v_1)+(v_2)+(w_3)+2(w_4)$. Run
Dhar's algorithm for $D_1$ and $v_0$. The dashed areas in Figure~\ref{F:Dhar}(b) illustrate $\mathcal{U}_{S_k,v_0}$ step by step.
Initially, we have $S_0=\{v_1,w_3,w_4\}$ and
$\mathcal{U}_{S_0,v_0}^c=\{v_1\}\bigcup[w_3,w_4]$. The set $N_0$ of all
non-saturated boundary points of $\mathcal{U}_{S_0,v_0}^c$ is
$\{v_1,w_3\}$. Then $S_1=S_0\setminus N_0=\{w_4\}$ and
$\mathcal{U}_{S_1,v_0}^c=\{w_4\}$. Since $w_4$ is a non-saturated point,
we have $N_1=\{w_4\}$ and $S_2=\emptyset$. Now $\mathcal{U}_{S_2,v_0}^c$
is the whole graph and we get the output $S=\emptyset$. Therefore
$D_1$ is $v_0$-reduced. We leave it to the readers to verify the
output of Dhar's algorithm for $D_2$ and $v_0$ is $\{v_1,v_2,w_4\}$
and $D_2$ is not $v_0$-reduced (Figure~\ref{F:Dhar}(c)).
\end{ex}

\begin{rmk}
The out-degrees are topological invariants, which implies that whether or not a
divisor is $v_0$-reduced is preserved under homeomorphisms.
\end{rmk}

\subsection{An algorithm for computing reduced divisors}

Based on Dhar's algorithm and the criterion from Lemma~\ref{L:1:2},
we formulate an algorithm to derive from an effective divisor $D$
the unique $v_0$-reduced divisor linearly equivalent to $D$.

Recall from \cite{HKN07} the notion of \emph{basic $v_0$-extremal
functions} on $\Gamma$. We say a rational function $f$ is a
\emph{basic $v_0$-extremal function} if there exist closed connected
disjoint subsets $X_{\text{max}}(f)$ and $X_{\text{min}}(f)$ of
$\Gamma$ such that:
\begin{compactenum}[(i)]
\item $v_0\in X_{\text{min}}(f)$;\\
\item $\Gamma-X_{\text{max}}(f)-X_{\text{min}}(f)$ is the union
of disjoint open segments of the same length; \\
\item $f$ achieves its maximum on $X_{\text{max}}(f)$ and its
minimum on $X_{\text{min}}(f)$;\\
\item $f$ has constant slope $1$ from $X_{\text{min}}(f)$ to
$X_{\text{max}}(f)$ on $\Gamma-X_{\text{max}}(f)-X_{\text{min}}(f)$.
\end{compactenum}

\begin{dfn} \label{D:3}
Let $D$ be an effective divisor on $\Gamma$ and $S$ a subset of
$\text{supp}D\setminus v_0$ such that all the boundary points of
$\mathcal{U}_{S,v_0}^c$ are saturated with respect to $D$. Let $\Omega$ be a fixed vertex set of $\Gamma$. We call
the following parameterizing process
$\Delta_{D,S,v_0}:[0,1]\rightarrow \text{Div}_+\Gamma$ the
$v_0$-\emph{move} of $D$ with respect to $S$ and $\Omega$:
\begin{compactenum}[(i)]
\item $\Delta_{D,S,v_0}^{(0)}=D$.
\item Let $J$ be the number of connected components of
$\mathcal{U}_{S,v_0}^c$, and denote these components by $X_1$
through $X_J$.

For $j=1,2,\cdots,J$ and $t\in(0,1]$, let\\
$d_j^{(t)}=t\cdot \textrm{dist} \big(X_j,
\mathcal{U}_{S,v_0} \bigcap (\Omega \bigcup v_0) \big)$,\\
$P_j^{(t)}=\{p \in \mathcal{U}_{S,v_0}\mid
\textrm{dist}(X_j,p)=d_j^{(t)}\}$, \\
$Q_j^{(t)}=\{q \in \mathcal{U}_{S,v_0}\mid \textrm{dist}(X_j,q)\leqslant
d_j^{(t)} \}$, and\\
$f_j^{(t)}$ a basic $v_0$-extremal function such that\\
$X_{\text{max}}(f_j^{(t)})=X_j$, and $\partial
X_{\text{min}}(f_j^{(t)})=P_j^{(t)}$.
\item $\Delta_{D,S,v_0}^{(t)}=D+\sum_{j=1}^J(f_j^{(t)})$, for $t\in(0,1]$.
\end{compactenum}
\end{dfn}

\begin{ex}
\begin{figure}[h]
\centering
\includegraphics[width=1\textwidth]{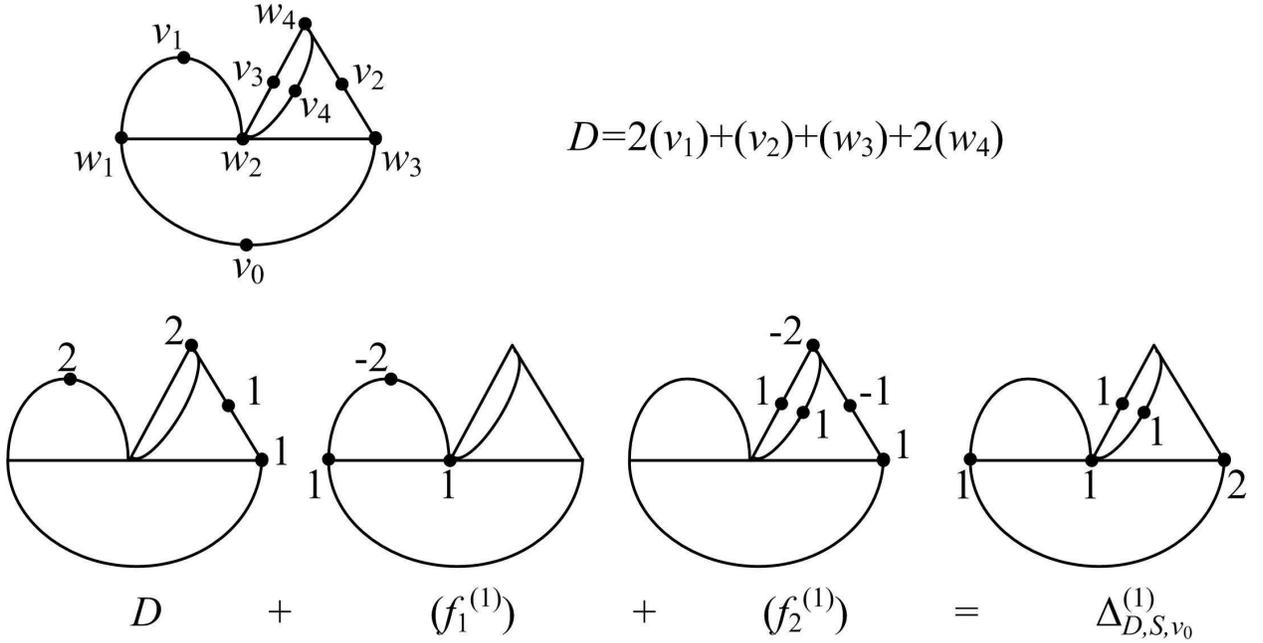}
\caption{A $v_0$-move of $D$.}\label{F:move}
\end{figure}
Let $\Gamma$ be the same metric graph as in Example~\ref{E:Dhar} and
$D=D_2$, as shown in Figure~\ref{F:move}. In particular, we assign
length $1$ to all edges and let $v_i$ be the middle point of the
corresponding edge for $i=0,1,2,3,4$. We know from Example~\ref{E:Dhar} that the output $S$ of Dhar's algorithm for $D$ and
$v_0$ is $\{v_1,v_2,w_4\}$. Let us consider a $v_0$-move
$\Delta_{D,S,v_0}$. Note that $\mathcal{U}_{S,v_0}^c$ has two
connected components, $v_1$ and $[v_2,w_4]$, which we denote by $X_1$ and
$X_2$ respectively. We observe that $d_1^{(t)}=d_2^{(t)}=0.5t$ for
$t\in(0,1]$. And at the end of the move ($t=1$), we get
$P_1^{(1)}=\{w_1,w_2\}$, $Q_1^{(1)}=[w_1,v_1,w_2]\setminus v_1$,
$P_2^{(1)}=\{v_3,v_4,w_3\}$, and
$Q_2^{(1)}=(w_4,v_3]\bigcup(w_4,v_4]\bigcup(v_2,w_3]$. In addition,
$(f_1^{(1)})=(w_1)+(w_2)-2(v_1)$ and
$(f_2^{(1)})=(v_3)+(v_4)+(w_3)-(v_2)-2(w_4)$. Then we get
$\Delta_{D,S,v_0}^{(1)}=D+(f_1^{(1)})+(f_2^{(1)})=(v_3)+(v_4)+(w_1)+(w_2)+2(w_3)$.
\end{ex}

\begin{lem} \label{L:1:3}
Let $D$ be an effective divisor which is zero at $v_0$ and
$\Delta_{D,S,v_0}$ a move of $D$. Denote
$\emph{supp}\Delta_{D,S,v_0}^{(t)}$ by $O^{(t)}$ for $t\in[0,1]$.
Then $\mathcal{U}_{O^{(t)},v_0}$ is non-expanding with respect to
$t$. Moreover, $\mathcal{U}_{O^{(t)},v_0}$ evolves continuously
unless possibly undergoing an abrupt shrink at $t=1$.
\end{lem}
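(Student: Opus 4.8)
The plan is to describe $O^{(t)}:=\mathrm{supp}\,\Delta_{D,S,v_0}^{(t)}$ explicitly and then to argue that for $t<1$ the ``fronts'' $P_j^{(t)}$ act as barriers that recede from $v_0$ continuously and monotonically, collapsing onto vertices of $\Omega$ (or onto $v_0$) only at $t=1$. First I would unwind the move: normalizing $f_j^{(t)}$ to vanish on $X_j=X_{\max}(f_j^{(t)})$ gives $f_j^{(t)}(y)=-\min(\mathrm{dist}(X_j,y),d_j^{(t)})$ with $d_j^{(t)}=t\,c_j$, $c_j=\mathrm{dist}(X_j,\mathcal{U}_{S,v_0}\cap(\Omega\cup v_0))$. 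Here $c_j>0$ since $X_j$ is compact and disjoint from that finite set, so $0<d_j^{(t)}<c_j$ for $t\in(0,1)$, whence $Q_j^{(t)}$ and $P_j^{(t)}$ lie in $\mathcal{U}_{S,v_0}$ and consist of interior points of edges of $\Gamma$, each front point moving away from $X_j$ — i.e.\ toward $v_0$ — linearly in $t$ within a fixed edge. A local order computation gives $\mathrm{ord}_v f_j^{(t)}=-\mathrm{outdeg}_{X_j}(v)$ on $\partial X_j$, $\mathrm{ord}_v f_j^{(t)}\ge 1$ on $P_j^{(t)}$, and $\mathrm{ord}_v f_j^{(t)}=0$ otherwise; and for $j'\neq j$ the component $X_j$ lies in the interior of $X_{\min}(f_{j'}^{(t)})$, so $f_{j'}^{(t)}$ has order $0$ on $\partial X_j$. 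Since $\mathrm{outdeg}_{\mathcal{U}_{S,v_0}^c}(v)=\mathrm{outdeg}_{X_j}(v)$ for $v\in\partial X_j$ (no edge joins two distinct components directly), the saturation hypothesis of Definition~\ref{D:3} yields $\Delta^{(t)}(v)=D(v)-\mathrm{outdeg}_{X_j}(v)\ge 0$ there; hence $\Delta^{(t)}\in\mathrm{Div}_+\Gamma$, and for $t\in(0,1)$,
\[ O^{(t)}\cap\mathcal{U}_{S,v_0}=(\mathrm{supp}\,D\cap\mathcal{U}_{S,v_0})\cup\textstyle\bigcup_j P_j^{(t)}, \]
while $O^{(t)}\setminus\mathcal{U}_{S,v_0}\subseteq\bigcup_j X_j$ does not depend on $t$. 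So the only $t$-dependence in $O^{(t)}$ is carried by the fronts.

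The heart of the argument is the following barrier property: for $t\in[0,1)$, $\mathcal{U}_{O^{(t)},v_0}\subseteq\mathcal{U}_{S,v_0}$ and, for each $j$, $\mathcal{U}_{O^{(t)},v_0}\cap B_j^{(t)}=\emptyset$, where $B_j^{(t)}=\{y:\mathrm{dist}(X_j,y)\le d_j^{(t)}\}$. Were $\mathcal{U}_{O^{(t)},v_0}$ to meet some $X_j$, then (being connected and containing $v_0\notin\mathcal{U}_{S,v_0}^c$) it would contain a point $v'\in\partial X_j\subseteq S$ with $v'\notin O^{(t)}$; by the description above this forces $\Delta^{(t)}(v')=0$ and $v'\in\partial X_{j'}$ for some $j'$. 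As $\mathcal{U}_{O^{(t)},v_0}$ is open it contains an initial open segment of an edge leaving $X_{j'}$ at $v'$ into $\mathcal{U}_{S,v_0}$; pushing along that edge one meets the sphere $\{\mathrm{dist}(X_{j'},\cdot)=d_{j'}^{(t)}\}$ at a point of $P_{j'}^{(t)}\subseteq O^{(t)}$ before leaving $\mathcal{U}_{O^{(t)},v_0}$ — a contradiction. The geometric fact to nail here (the step I expect to be the main obstacle) is that for radii $\le d_{j'}^{(t)}<c_{j'}$ the geodesics emanating from $X_{j'}$ stay inside single edges of $\Omega$, so this sphere really does cut each such edge at a front point in $\mathcal{U}_{S,v_0}$; the danger that the trace wanders between different $X_k$'s or off the end of a short edge I would dispose of by taking a connected subset of $\mathcal{U}_{O^{(t)},v_0}$ joining $v_0$ to $\mathcal{U}_{S,v_0}^c$ of minimal length and arguing it cannot dodge every $P_{j'}^{(t)}$. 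Granting $\mathcal{U}_{O^{(t)},v_0}\subseteq\mathcal{U}_{S,v_0}$, the $B_j^{(t)}$ statement follows because a connected set through $v_0\notin B_j^{(t)}$ meeting $B_j^{(t)}$ must meet $\partial B_j^{(t)}$, which is contained in $\{\mathrm{dist}(X_j,\cdot)=d_j^{(t)}\}=P_j^{(t)}\sqcup(\text{sphere}\cap\bigcup_{k\neq j}X_k)$, of which $P_j^{(t)}\subseteq O^{(t)}$ and $\bigcup_{k\neq j}X_k$ is already excluded.

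With the barrier property, non-expansion is immediate. For $0\le t_1<t_2\le 1$, pick $x\in\mathcal{U}_{O^{(t_2)},v_0}$ and a path $\gamma\subseteq\Gamma\setminus O^{(t_2)}$ from $v_0$ to $x$; then $\gamma\subseteq\mathcal{U}_{O^{(t_2)},v_0}$, so by the previous paragraph applied at $t_2$ (if $t_2=1$, note $\mathrm{dist}(X_j,v_0)=c_j$ for some $j$ would force $v_0\in O^{(1)}$, hence $\mathcal{U}_{O^{(1)},v_0}=\emptyset$, and otherwise the same argument runs) $\gamma$ avoids $\bigcup_j X_j$ and $\bigcup_j B_j^{(t_2)}$. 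Since $P_j^{(t_1)}\subseteq B_j^{(t_1)}\subseteq B_j^{(t_2)}$ and $\mathrm{supp}\,D\cap\mathcal{U}_{S,v_0}\subseteq O^{(t_2)}$, every point of $O^{(t_1)}$ lies in $(\mathrm{supp}\,D\cap\mathcal{U}_{S,v_0})\cup\bigcup_j P_j^{(t_1)}\cup\bigcup_j X_j$ (for $t_1=0$, simply $O^{(0)}=\mathrm{supp}\,D\subseteq(\mathrm{supp}\,D\cap\mathcal{U}_{S,v_0})\cup\bigcup_j X_j$), all of which $\gamma$ avoids; hence $x\in\mathcal{U}_{O^{(t_1)},v_0}$, i.e.\ $\mathcal{U}_{O^{(t_2)},v_0}\subseteq\mathcal{U}_{O^{(t_1)},v_0}$.

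For continuity, observe that by the barrier property $\mathcal{U}_{O^{(t)},v_0}$ is exactly the maximal connected subset of $\mathcal{U}_{S,v_0}$ containing $v_0$ and disjoint from $O^{(t)}\cap\mathcal{U}_{S,v_0}=(\mathrm{supp}\,D\cap\mathcal{U}_{S,v_0})\cup\bigcup_j P_j^{(t)}$. On $[0,1)$ the first set is fixed and each front point moves linearly within a fixed edge, so $t\mapsto\mathcal{U}_{O^{(t)},v_0}$ varies continuously (e.g.\ in the Hausdorff metric on closures). At $t=1$, some $d_j^{(1)}=c_j$ drives a front onto a vertex $w\in\Omega$ (or onto $v_0$); new chips then enter $O^{(1)}$ at $w$, which can simultaneously block all edges at $w$ not pointing back toward $X_j$, producing an abrupt change that, by the non-expansion statement, can only be a shrink — the one point at which the evolution may fail to be continuous. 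This completes the plan, the technical crux being the barrier property of the second paragraph.
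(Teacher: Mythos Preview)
Your proposal is correct and follows essentially the same approach as the paper. The paper's proof is much terser: it simply asserts $\mathcal{U}_{O^{(t)},v_0}=\mathcal{U}_{O^{(0)},v_0}\setminus Q^{(t)}$ for $t\in[0,1)$ (with $Q^{(t)}=\bigcup_j Q_j^{(t)}$) and the inclusion $\subseteq$ at $t=1$, then reads off non-expansion and continuity from the monotone continuous growth of $Q^{(t)}$ --- your barrier argument is exactly the justification that asserted identity requires.
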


\begin{proof}
Let $Q_j^{(t)}$ be as defined in Definition~\ref{D:3} for
$t\in(0,1]$. Let $Q^{(0)}=\partial\mathcal{U}_{S,v_0}$ and
\begin{equation*}
Q^{(t)}=\bigcup_{j=1}^J Q_j^{(t)}\text{, for }t\in(0,1].
\end{equation*}
Clearly, $Q^{(t)}$ continuously expands with respect to $t$. For
$t\in[0,1)$, we have
\begin{equation*}
\mathcal{U}_{O^{(t)},v_0}=\mathcal{U}_{O^{(0)},v_0} \setminus
Q^{(t)},
\end{equation*}
which means $\mathcal{U}_{O^{(t)},v_0}$ is non-expanding as $t$
increases and its evolution is continuous. The case $t=1$ is somehow
special, since the continuous expansion of $Q(t)$ might result in a hit
at certain vertices or $v_0$. But we still have
\begin{equation*}
\mathcal{U}_{O^{(1)},v_0}\subseteq\mathcal{U}_{O^{(0)},v_0}
\setminus Q^{(1)}.
\end{equation*}
This means that an abrupt shrink of $\mathcal{U}_{O^{(t)},v_0}$
might happen at $t=1$.
\end{proof}

Based on making $v_0$-moves iteratively, we propose the following
algorithm to derive the $v_0$-reduced divisor linearly equivalent to
an effective divisor $D$.

\begin{algm} \label{A:main}
\textbf{Input:} An effective divisor $D\in \text{Div}_+\Gamma$, and a point $v_0\in\Gamma$.\\
\textbf{Output:} The unique $v_0$-reduced divisor $D_{v_0}$ linearly equivalent to $D$.\\
Initially, set $D^{(0)}=D$, and $i=0$.
\begin{compactenum}[(1)]
\item Run Dhar's algorithm for $D^{(i)}$ and $v_0$ with the output denoted by $S^{(i)}$. If $S^{(i)}=\emptyset$, then set $D_{v_0}=D^{(i)}$ and stop the
procedure. In addition, we say that the procedure terminates at $i$.
And for convenience, we set $D^{(t)}=D^{(i)}$ for all real numbers
$t>i$. Otherwise, go to step (2).
\item Make the $v_0$-move $\Delta_{D^{(i)},S,v_0}$ of $D^{(i)}$ with respect to $S^{(i)}$. Let
$D^{(i+t)}=\Delta_{D^{(i)},S,v_0}^{(t)}$ for $t\in(0,1]$. Set
$i\leftarrow i+1$, and go to step (1).
\end{compactenum}
\end{algm}

If the procedure in Algorithm~\ref{A:main} terminates at $I$, then
by Lemma~\ref{L:1:2}, $D_{v_0}$ is $v_0$-reduced as desired, and the
evolution of $D$ into $D_{v_0}$ is parameterized by $D^{(t)}$,
$t\in[0,I]$. The main goal of this section is to prove such a
procedure always terminates (Theorem~\ref{T:tmt}), which means that we will always get to
a reduced divisor using finitely many moves.

\begin{lem} \label{L:1:4}
We have the following properties of the parameterizing procedure in
Algorithm~\ref{A:main}:
\begin{compactenum}[(i)]
\item $D^{(t)}(v_0)$ is integer-valued, bounded, and
non-decreasing with respect to $t$, and it can jump only when $t$ is
an integer. In addition, there exists an integer $I_1$ such that
$D^{(t)}(v_0)=D^{(I_1)}(v_0)$ for all $t\geqslant I_1$.

\item For a non-negative integer $i_0$, let $d=D^{(i_0)}(v_0)$ and $D_0^{(t)}=D^{(t)}-d\cdot(v_0)$. Then
for all real numbers $t\geqslant i_0$,
$\mathcal{U}_{\emph{supp}D_0^{(t)},v_0}$ is non-expanding with
respect to $t$. In particular,
$\mathcal{U}_{\emph{supp}D_0^{(t)},v_0}$ evolves continuously unless
possibly undergoing an abrupt shrink when $t$ is an integer.

\item Denote $\mathcal{U}_{\emph{supp}D^{(t)}\setminus
v_0,v_0}$ by $U(t)$. For $t\geqslant I_1$, let
$K^{(t)}=\#\{\Omega\bigcap U(t)\}$, which counts the number of
vertices in $U(t)$ after $D^{(t)}(v_0)$ reaches its maximum. Then
$K^{(t)}$ is integer-valued, bounded, and non-increasing with
respect to $t$, and it can jump only when $t$ is an integer.
Furthermore, there exists an integer $I_2 \geqslant I_1$ such that
$K^{(t)}=K^{(I_2)}$ for all $t\geqslant I_2$.

\end{compactenum}
\end{lem}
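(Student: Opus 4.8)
The plan is to verify the three claimed monotonicity properties one at a time, in the order (i), (ii), (iii), since each builds on the previous one, and the crux throughout is the geometric description of how $\mathcal{U}_{S,v_0}$ shrinks during a $v_0$-move, already provided by Lemma~\ref{L:1:3}.

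For part (i): $D^{(t)}(v_0)$ changes only during $v_0$-moves, i.e.\ only on intervals $(i,i+1]$, and within such an interval it is constant on $(i,i+1)$ with a possible jump at $t=i+1$; this is because each $(f_j^{(t)})$ places mass at $v_0$ only if $v_0 \in P_j^{(t)} = \partial X_{\min}(f_j^{(t)})$, which by Lemma~\ref{L:1:3} can happen only at $t=1$ (the ``abrupt shrink'' moment). So $D^{(t)}(v_0)$ can jump only at integers. Monotonicity (non-decreasing): during a move, the function $f_j^{(t)}$ has its minimum set $X_{\min}$ containing $v_0$, so $\mathrm{ord}_{v_0}(f_j^{(t)}) \geqslant 0$; hence $D^{(t+1)}(v_0) \geqslant D^{(t)}(v_0)$. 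For boundedness and hence eventual stabilization, I would invoke the fact that every $D^{(t)}$ is effective and linearly equivalent to $D$, so $D^{(t)}(v_0) \leqslant \deg D$; an integer-valued, bounded, non-decreasing sequence (indexed by the integers where jumps occur) is eventually constant, giving $I_1$.

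For part (ii): fix $i_0$, set $d = D^{(i_0)}(v_0)$ and $D_0^{(t)} = D^{(t)} - d\cdot(v_0)$. The point is that subtracting a fixed multiple of $(v_0)$ does not change $\mathrm{supp}\, D^{(t)}$ away from $v_0$, but controls the behavior at $v_0$: for $t \geqslant i_0$ where $D^{(t)}(v_0)$ may still exceed $d$, the extra mass at $v_0$ only helps keep $v_0$ in the support, so removing exactly $d$ copies and tracking $\mathcal{U}_{\mathrm{supp}\,D_0^{(t)},v_0}$ is the right bookkeeping. Then the non-expansion and continuity-except-at-integers statement is essentially a restatement of Lemma~\ref{L:1:3} applied to each successive $v_0$-move on $[i,i+1]$, with the observation that at integer times the support can only lose points (from non-saturated boundary points being cleared in Dhar's algorithm, plus the abrupt shrink), never gain points that would enlarge $\mathcal{U}$ — and one must check continuity is not broken when passing from one move's endpoint to the next move's start, which follows because Dhar's algorithm does not move mass, only identifies $S$.

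For part (iii): once $t \geqslant I_1$, $D^{(t)}(v_0)$ is frozen at its maximum, so $U(t) = \mathcal{U}_{\mathrm{supp}\,D^{(t)}\setminus v_0,v_0}$ coincides (for $t \geqslant I_1$) with the set $\mathcal{U}_{\mathrm{supp}\,D_0^{(t)},v_0}$ from part (ii) with $i_0 = I_1$; hence by (ii) it is non-expanding and evolves continuously except at integers. Since $U(t)$ is non-expanding, the count $K^{(t)} = \#(\Omega \cap U(t))$ of vertices inside it is non-increasing; it is a non-negative integer bounded by $\#\Omega$, and it can only jump when $U(t)$ jumps, i.e.\ at integers — continuous shrinking of an open set cannot change which vertices (a finite fixed set) lie strictly inside it, since a vertex can only exit $U(t)$ by landing on $\partial U(t)$, which by Lemma~\ref{L:1:3} happens at the abrupt-shrink integer times. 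An integer-valued, bounded, non-increasing sequence is eventually constant, yielding $I_2 \geqslant I_1$.

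The main obstacle is the careful justification in part (ii) that the evolution of $\mathcal{U}_{\mathrm{supp}\,D_0^{(t)},v_0}$ is genuinely \emph{continuous} across the transition from the end of one $v_0$-move to the start of the next — Lemma~\ref{L:1:3} covers the interior of a single move, but one has to argue that inserting a round of Dhar's algorithm between moves introduces no discontinuity other than those already permitted at integer times, and that an apparent ``abrupt shrink'' at $t=i+1$ of one move is the same event as the integer-time jump recorded for the next move. I expect this gluing argument, together with pinning down exactly when a vertex of $\Omega$ can cross $\partial U(t)$, to be the only place requiring genuine care; the monotonicity and boundedness assertions are then immediate from effectivity and the slope-$1$, $v_0 \in X_{\min}$ structure of basic $v_0$-extremal functions.
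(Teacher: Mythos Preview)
Your proposal is correct and follows essentially the same approach as the paper's own proof: both argue (i) from $v_0 \in X_{\min}$ and the degree bound, then invoke Lemma~\ref{L:1:3} move-by-move for (ii), and deduce (iii) from (ii) once $D^{(t)}(v_0)$ has stabilized. If anything, you are more explicit than the paper about the gluing between consecutive moves and about why a vertex of $\Omega$ can leave $U(t)$ only at integer times (this really comes from the definition of $d_j^{(t)}$ in Definition~\ref{D:3}, not from Lemma~\ref{L:1:3} per se), whereas the paper simply asserts these facts; but the underlying argument is the same.
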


\begin{proof}
Clearly $D^{(t)}(v_0)$ is integer-valued. Note that $v_0 \notin
S^{(i)}$ for any $i$, which implies that $D^{(t)}(v_0)$ is
non-decreasing and can only change its value when $t$ is an
integer. Moreover, $D^{(t)}(v_0)$ is bounded from below by $D(v_0)$
and from above by $\deg{(D)}$, which guarantees the existence of the
finite integer $I_1$. Thus Property \mbox{(i)} holds.

$D_0^{(i_0)}$ has value $0$ at $v_0$. Thus by Lemma~\ref{L:1:3}, for
$t\geqslant i_0$, $\mathcal{U}_{\text{supp}D_0^{(t)},v_0}$ is
non-expanding, and evolves continuously unless possibly undergoing
an abrupt shrink when $t$ is an integer. In particular, whenever
$v_0$ is hit by a move, $\mathcal{U}_{\text{supp}D_0^{(t)},v_0}$
will always be empty afterwards. And Property \mbox{(ii)} is proved.

After $D^{(t)}(v_0)$ reaches its maximum at $t=I_1$, $v_0$ will
never be hit anymore. The above argument implies that for
$t\geqslant I_1$, $U(t)$ is non-expanding, and continuously evolves
unless possibly undergoing an abrupt shrink when $t$ is an integer.
It follows immediately that $K^{(t)}$ is integer-valued, and
non-increasing with respect to $t$, while it only possibly changes
when $t$ is an integer. Clearly $K^{(t)}$ is lower-bounded by 0,
which also implies the existence of $I_2$ and finishes the proof of
Property \mbox{(iii)}.
\end{proof}

\begin{thm} \label{T:tmt}
The procedure in Algorithm~\ref{A:main} always terminates.
\end{thm}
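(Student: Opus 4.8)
The plan is to show that the procedure cannot run forever by extracting a strictly decreasing monotone quantity out of the three invariants packaged in Lemma~\ref{L:1:4}. By Lemma~\ref{L:1:4}(i) and (iii), after the step $I_2$ the value $D^{(t)}(v_0)$ and the vertex-count $K^{(t)}=\#\{\Omega\cap U(t)\}$ are both constant; so if the procedure does not terminate, it runs through infinitely many moves, all of which occur while $D^{(t)}(v_0)$ has already reached its maximum and $U(t)$ already contains a fixed number of vertices. The key point I would establish is that under these stabilized conditions a single $v_0$-move must make definite, irreversible progress, so only finitely many such moves are possible --- contradiction.

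The main work is therefore the following claim: for $i\geqslant I_2$, the move $\Delta_{D^{(i)},S^{(i)},v_0}$ strictly shrinks the open set $\mathcal U_{\mathrm{supp}D^{(i)}\setminus v_0,v_0}=U(i)$ in a way that never gets undone, and the ``room'' available for such shrinks is finite. First I would observe, using Lemma~\ref{L:1:3} together with Lemma~\ref{L:1:4}(ii), that $U(t)$ is non-expanding for $t\geqslant I_1$ and that its closure $U(t)^c$ only grows; since after $I_1$ the point $v_0$ is never hit again, a move adds the sets $Q_j^{(t)}$ --- which grow until they reach a vertex of $\Omega$ --- to the saturated part, and by definition of $d_j^{(t)}$ a move of length $1$ pushes the boundary exactly up to the next vertex of $\Omega\cup\{v_0\}$. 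I would then argue that each nontrivial move after $I_2$ either decreases $K^{(t)}$ (contradicting that $K$ is already constant past $I_2$) or, if $K$ stays the same, the boundary $\partial U(t)$ advances strictly toward the ``next'' vertices without ever retreating, so the combinatorial type of $U(t)^c$ --- which vertices it contains and along which edges its boundary sits --- is a monotone object taking values in a finite set. Once that combinatorial type stabilizes, I would show the move becomes trivial (Dhar's algorithm outputs $\emptyset$), because the boundary points are then pinned at vertices and the saturation condition forced by $S^{(i)}$ together with the reduced-ness failure cannot persist; alternatively, one pushes a potential-type argument: assign to $D^{(t)}$ a finite ordinal-valued complexity $\big(\deg D - D^{(t)}(v_0),\ K^{(t)},\ \text{(combinatorial type of }U(t)^c\text{)}\big)$ ordered lexicographically, and check that every move strictly decreases it.

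The step I expect to be the main obstacle is proving that the move is genuinely \emph{irreversible} past $I_2$ --- i.e. ruling out oscillation in which $U(t)$ shrinks along one move and a later move effectively reverses the combinatorial picture. The delicate point is the ``abrupt shrink'' allowed at integer times in Lemma~\ref{L:1:4}(ii)--(iii): I must confirm that once $D^{(t)}(v_0)$ and $K^{(t)}$ have stabilized, the only way the set $U(t)$ can change is by the continuous erosion described by $Q^{(t)}$, and that this erosion is bounded by the finitely many segments between consecutive vertices of $\Omega$. Concretely, I would track, for $t\geqslant I_2$, the closed set $\mathcal U_{\mathrm{supp}D_0^{(t)},v_0}^c$ and show its boundary points can only move monotonically outward toward vertices of $\Omega$ and, having reached them, stay there; since there are only finitely many vertices and finitely many edges, after finitely many moves no further change is possible, at which point Dhar's algorithm necessarily returns $S=\emptyset$ and the procedure terminates. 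Putting these monotonicities together gives the desired contradiction with the assumption of nontermination.
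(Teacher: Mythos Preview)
Your strategy is different from the paper's, and the part you yourself flag as the ``main obstacle'' is a genuine gap that your outline does not close.

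The paper does \emph{not} build a strictly decreasing lexicographic complexity; it argues by induction on $\deg(D)$. After reaching $I_2$ (so $D^{(t)}(v_0)$ and $K^{(t)}$ are frozen), it fixes a component $X$ of $U(I_2)^c$, looks at the finitely many open segments $e\in\mathcal{E}_X$ of $U(I_2)\setminus U_0$ abutting $X$, and proves that at least one such $e$ is \emph{never obstructed}: its endpoint $w_e$ on $X$ is never saturated and hence never lies in any $S^{(i)}$ for $i\geqslant I_2$. (If every $e$ eventually became obstructed, then at some step all boundary points of the $X$-component would be saturated, and the next move would hit a vertex in $U(\cdot)$, contradicting the constancy of $K$.) Since $w_e$ never enters $S^{(i)}$, the algorithm run on $D^{(I_2)}$ is step-for-step identical to the algorithm run on $D^{(I_2)}-(w_e)$; the latter has strictly smaller degree, so induction finishes the proof. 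The whole point is to locate one chip that is provably inert and throw it away.

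Your plan instead tries to show that each move after $I_2$ strictly decreases a ``combinatorial type'' of $U(t)^c$. The difficulty is that this quantity is not pinned down, and the two assertions you need are not established: (a) that the set of edges on which $\partial U(t)$ sits is monotone in any useful sense (it is not obviously so --- as $U(t)^c$ grows, boundary points can leave one edge and appear on several adjacent ones, and the relevant move is governed by $\mathcal{U}_{S^{(i)},v_0}$, which is typically strictly larger than $U(i)$ and may contain vertices of $\Omega$ already inside $U(i)^c$, so a full move need not touch the vertex set of $U(i)$ at all); and (b) that stabilization of this type forces Dhar's algorithm to return $\emptyset$. Neither follows from Lemma~\ref{L:1:3} or Lemma~\ref{L:1:4}, and without a concrete, provably strictly decreasing invariant you cannot rule out infinitely many nontrivial moves with $U(t)$ shrinking by ever smaller amounts. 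The paper's ``unobstructed edge'' is exactly the missing idea: it converts the stabilization of $K^{(t)}$ into the existence of an inert chip, which is what lets the argument terminate --- via degree induction rather than via a monotone potential.
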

\begin{proof}
We proceed by induction on $\deg{(D)}$. Clearly Theorem~\ref{T:tmt}
holds when $\deg{D}=0$ since this implies that $D=0$. Now suppose
$\deg{(D)}>0$.

By Lemma~\ref{L:1:4}\mbox{(i)}, if $D^{(I_1)}(v_0)>0$, then $D^{(t)}(v_0)>0$ for all $t\geqslant 0$ and the result follows by induction (applied to$D^{(I_1)}-(v_0)$). Now we assume $D^{(I_1)}(v_0)=0$. By Lemma~\ref{L:1:4}\mbox{(iii)}, there exists an integer $I_2$, such that
$K^{(t)}=K^{(I_2)}$ for all $t\geqslant I_2$. We let $t\geqslant
I_2$ in the remaining parts of the proof. Note that $U(t)$ might
keep shrinking. However, such a shrink can never hit a vertex
anymore, which also means that $U(t)$ evolves continuously for
$t\geqslant I_2$. Let $X$ be a connected component of $U(I_2)^c$.
Let $U_0$ be a subset of $U(I_2)$ derived by removing the interior
of all the segments with one end open and the other end a vertex or
$v_0$. Clearly $U_0$ is closed
and connected. And $U(I_2)\setminus U_0$ is a union of some disjoint
open segments. Denote by $\mathcal{E}_X$ the set of these segments.
For $e\in\mathcal{E}_X$, we use $w_e$ to denote the end of $e$ on
$X$. We say $e\in\mathcal{E}_X$ is \emph{obstructed} at $t$ if
$\text{supp}D^{(t)}\bigcap e\neq\emptyset$ or $w_e$ is saturated
with respect to $D^{(t)}$ and $X$. Note that if an edge is
obstructed at $t$, then it is obstructed at all $t'\geqslant t$.

We claim that there exists $e\in\mathcal{E}_X$ that never becomes
obstructed. Otherwise, there exists an integer $I_3$ such that for
$t\geqslant I_3$, the component of $U(t)^c$ corresponding to $X$ has
all its boundary points saturated. Then one additional move from
Algorithm~\ref{A:main} will result in a hit at a vertex, which
contradicts the minimality of $K^{(I_2)}$. So let $e$ be
an element of $\mathcal{E}_X$ that never becomes obstructed. Then
$w_e$ does not belong to any output $S^{(i)}$ of Dhar's algorithm
for $D^{(i)}$ when $i\geqslant I_2$. So Algorithm~\ref{A:main} for
$D^{(I_2)}$ terminates if and only if the algorithm for $D^{(I_2)}-(w_e)$
terminates, and
 the induction applies.
\end{proof}

\begin{rmk}
What should $X$ look like in the above proof? Since $X$ must contain
non-saturated boundary points with respect to $D^{(I_2)}$, there are
only two possibilities. $X$ can be a single non-vertex point with
$D^{(I_2)}(X)=1$, or else $X^{(I_2)}$ must contain a
vertex on its boundary.
\end{rmk}

\begin{rmk}
We know from Riemann-Roch theorem that the rank of the divisor $n\cdot(v_0)$ as a function of $n$ can be arbitrarily large. Hence given a divisor $D$ (not necessarily effective) on $\Gamma$, there always exists a divisor $D'$ which is non-negative on $\Gamma\setminus v_0$ and linearly equivalent to $D$. In particular, \cite{HKN07} presents an algorithm to construct such a divisor $D'$ as the first step in the proof of the existence part of Theorem~\ref{T:unique} (Theorem~10 in \cite{HKN07}).
By running Algorithm~\ref{A:main} for $D'-D'(v_0)\cdot(v_0)$ and $v_0$, we can always
obtain a $v_0$-reduced divisor $D''$ linearly equivalent to
$D-D'(v_0)\cdot(v_0)$. Then $D''+D'(v_0)\cdot(v_0)$ is a $v_0$-reduced divisor linearly equivalent to $D$. This
provides an alternative proof of the existence part of Theorem~\ref{T:unique}.
\end{rmk}

\begin{cor} \label{C:1:1}
Let $D$ be a divisor on $\Gamma$ and $|D|$ the linear system
associated to $D$. For $v_0\in\Gamma$, let $D_{v_0}$ be the unique
$v_0$-reduced divisor $D_{v_0}$ in $|D|$.
\begin{compactenum}[(i)]
\item If $v_0\in\emph{supp}|D|$, then $D_{v_0}(v_0)>0$.
\item If $|D|\neq\emptyset$ and $v_0\notin\emph{supp}|D|$, then $\mathcal{U}_{\emph{supp}D_{v_0},v_0}$ is nonempty and for
all $v\in\mathcal{U}_{\emph{supp}D_{v_0},v_0}$, we have
$v\notin\emph{supp}|D|$ and $D_{v_0}$ is also $v$-reduced.
\end{compactenum}
\end{cor}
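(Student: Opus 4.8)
The plan is to extract both statements from the termination of Algorithm~\ref{A:main} together with the bookkeeping already recorded in Lemma~\ref{L:1:4}. For part (i), suppose $v_0\in\operatorname{supp}|D|$, so there is an effective $D'\sim D$ with $D'(v_0)>0$. If I run Algorithm~\ref{A:main} starting from $D'$, the quantity $D^{(t)}(v_0)$ is non-decreasing in $t$ by Lemma~\ref{L:1:4}(i), so it stays $\ge D'(v_0)>0$ throughout; in particular the terminal divisor $D_{v_0}$ satisfies $D_{v_0}(v_0)\ge D'(v_0)>0$. (One must start the algorithm from an effective representative, which exists precisely because $v_0\in\operatorname{supp}|D|$ forces $|D|\neq\emptyset$; uniqueness of the $v_0$-reduced divisor from Theorem~\ref{T:unique} then identifies the output with $D_{v_0}$.)

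For part (ii), assume $|D|\neq\emptyset$ and $v_0\notin\operatorname{supp}|D|$. Pick any effective $D'\sim D$ and run the algorithm; since no member of $|D|$ charges $v_0$, we have $D^{(t)}(v_0)=0$ for all $t$, hence $D_{v_0}(v_0)=0$ and so $v_0\notin\operatorname{supp}D_{v_0}$, which already shows $\mathcal{U}_{\operatorname{supp}D_{v_0},v_0}$ is nonempty (it contains $v_0$). Now let $v\in\mathcal{U}_{\operatorname{supp}D_{v_0},v_0}$. The point of the argument is that $D_{v_0}$ is already $v$-reduced: a $v$-move would start Dhar's algorithm for $D_{v_0}$ based at $v$, but since $v$ and $v_0$ lie in the same connected component $\mathcal{U}_{\operatorname{supp}D_{v_0},v_0}$ of $(\operatorname{supp}D_{v_0})^c$, the set $\mathcal{U}_{S,v}^c$ produced at each stage coincides with $\mathcal{U}_{S,v_0}^c$ for $S\subseteq\operatorname{supp}D_{v_0}\setminus v=\operatorname{supp}D_{v_0}\setminus v_0$; the out-degrees and saturation conditions are identical, so Dhar's algorithm for $D_{v_0}$ based at $v$ returns $\emptyset$ exactly because it does so based at $v_0$. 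By Lemma~\ref{L:1:2}, $D_{v_0}$ is $v$-reduced, and by uniqueness (Theorem~\ref{T:unique}) it is \emph{the} $v$-reduced divisor in $|D|$; since $D_{v_0}(v)=0$, we conclude $v\notin\operatorname{supp}|D|$ as well, because if some $D''\in|D|$ had $D''(v)>0$ then the $v$-reduced representative would be nonzero at $v$ by part (i) applied with base point $v$.

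The routine points are the identification of the algorithm's output with $D_{v_0}$ via Theorem~\ref{T:unique}, and checking that $\mathcal{U}_{S,v}=\mathcal{U}_{S,v_0}$ whenever $v,v_0$ lie in the same component of $S^c$ — this is immediate from the definition of $\mathcal{U}_{S,\cdot}$ as the component of $S^c$ containing the base point. The main obstacle, and the step deserving the most care, is the claim that $D_{v_0}$ is $v$-reduced for \emph{every} $v$ in the open set $\mathcal{U}_{\operatorname{supp}D_{v_0},v_0}$: one has to be sure that moving the base point within a single component of the complement of the support genuinely leaves every instance of Dhar's algorithm unchanged — in particular that the initial set $\operatorname{supp}D_{v_0}\setminus v$ does not accidentally differ from $\operatorname{supp}D_{v_0}\setminus v_0$, which holds precisely because neither $v$ nor $v_0$ lies in $\operatorname{supp}D_{v_0}$. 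Once this invariance is nailed down, both parts of the corollary follow with no further computation.
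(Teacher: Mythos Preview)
Your proposal is correct and follows essentially the same route as the paper's proof: part (i) runs Algorithm~\ref{A:main} from an effective $D'$ with $D'(v_0)>0$ and uses the monotonicity of $D^{(t)}(v_0)$, while part (ii) observes $D_{v_0}(v_0)=0$, argues via Dhar's algorithm that $D_{v_0}$ is $v$-reduced, and then invokes part (i) at the base point $v$. The only difference is that where the paper writes ``using Dhar's algorithm, it is easy to see that $D_{v_0}$ is also $v$-reduced,'' you have spelled out the reason---namely that $\mathcal{U}_{S,v}=\mathcal{U}_{S,v_0}$ for every $S\subseteq\operatorname{supp}D_{v_0}$ because $v$ and $v_0$ lie in the same component of $S^c$---which is a welcome elaboration rather than a departure.
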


\begin{proof}
If $v_0\in\text{supp}|D|$, let $D'$ be an effective divisor such
that $D'\in|D|$ and $D'(v_0)>0$. Applying Algorithm~\ref{A:main} for
$D'$ and $v_0$, we can derive $D_{v_0}$. Note that
$D_{v_0}(v_0)\geqslant D'(v_0)$. Thus $D_{v_0}(v_0)>0$.

If $|D|\neq\emptyset$ and $v_0\notin\text{supp}|D|$, then
$D_{v_0}(v_0)=0$, which means
$\mathcal{U}_{\text{supp}D_{v_0},v_0}$ is nonempty. For all
$v\in\mathcal{U}_{\text{supp}D_{v_0},v_0}$, clearly $D_{v_0}(v)=0$,
and using Dhar's algorithm, it is easy to see that $D_{v_0}$ is
also $v$-reduced . Moreover, we have $v\notin\text{supp}|D|$ by
\mbox{(i)}.
\end{proof}

\begin{rmk}
In the sense of Corollary~\ref{C:1:1}\mbox{(ii)}, if $X$ is a subset
of $\mathcal{U}_{\text{supp}D_{v_0},v_0}$, then we may also say
$D_{v_0}$ is $X$-reduced.
\end{rmk}

\section{Rank-determining sets} \label{S:2}

We say a subset $\Gamma'$ of a metric graph $\Gamma$ is a
\emph{subgraph} of $\Gamma$ if $\Gamma'$ is connected and closed.
Let $\Omega$ be a vertex set of $\Gamma$. Then
$(\Omega\bigcap\Gamma')\bigcup\partial\Gamma'$ (considered in
$\Gamma$) is automatically a vertex set of $\Gamma'$, which we call
the vertex set of $\Gamma'$ induced by $\Gamma$. A \emph{tree} on
$\Gamma$ is a subgraph of $\Gamma$ with genus $0$, and a
\emph{spanning tree} of $\Gamma$ is a tree on $\Gamma$ that is
minimal among those which contain all vertices of $\Gamma$. We call a point $v$
a \emph{cut point} in a metric graph if $\Gamma\setminus v$ is
disconnected.

\subsection{$A$ is a rank-determining set if and only if $\mathcal{L}(A)=\Gamma$}
For a nonempty subset $A$ of $\Gamma$, we use $\mathcal{L}(A)$ to
denote a subset of $\Gamma$ such that $v\in\mathcal{L}(A)$ if and only if
$A\subseteq \text{supp}|D|$ implies $v\in\text{supp}|D|$. For
simplicity of notation, we denote $\mathcal{L}(\bigcup_{i=1}^nA_i)$ by
writing $\mathcal{L}(A_1,A_2,\cdots,A_n)$. Note that we can always
find a linear system whose support contains $A$ (for example, the
support of the linear system associated to $\sum_{v\in\Omega}(v)$ is
the whole graph $\Gamma$). Therefore
\begin{equation*}
\mathcal{L}(A)=\bigcap_{\text{supp}|D|\supseteq A}\text{supp}|D|.
\end{equation*}
Obviously, $A\subseteq\mathcal{L}(A)$, and if $A'$ is a subset of
$\mathcal{L}(A)$, then $\mathcal{L}(A,A')=\mathcal{L}(A)$. In case
we want to emphasize that $A$ and all the linear systems are defined on
$\Gamma$, we may write $\mathcal{L}_\Gamma(A)$ in stead of
$\mathcal{L}(A)$.

\begin{prop} \label{P:2:1}
Let $A$ be a nonempty subset of $\Gamma$. The following are
equivalent.
\begin{compactenum}[(i)]
\item $\mathcal {L}(A)=\Gamma$.
\item If $r_A(D)\geqslant 1$, then $r(D)\geqslant 1$.
\item $A$ is a
rank-determining set of $\Gamma$.
\end{compactenum}
\end{prop}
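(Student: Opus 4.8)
The plan is to prove the cycle of implications $(iii)\Rightarrow(ii)\Rightarrow(i)\Rightarrow(iii)$. The implication $(iii)\Rightarrow(ii)$ is immediate from the definition of a rank-determining set. For $(ii)\Rightarrow(i)$ I would first record two elementary reformulations: for any divisor $D$, $r_A(D)\geqslant 1$ if and only if $A\subseteq\mathrm{supp}|D|$, and $r(D)\geqslant 1$ if and only if $\mathrm{supp}|D|=\Gamma$. Both follow by testing the rank against degree-one effective divisors together with the observation that $|D-(v)|\neq\emptyset$ is equivalent to $v\in\mathrm{supp}|D|$; since $A$ is nonempty, the clause $r_A(D)\geqslant 0$ (i.e.\ $|D|\neq\emptyset$) comes for free. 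With these reformulations, $(ii)$ says exactly that $A\subseteq\mathrm{supp}|D|$ implies $\mathrm{supp}|D|=\Gamma$ for every $D$, which by definition of $\mathcal L$ is the assertion $\mathcal L(A)=\Gamma$.

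The substance lies in $(i)\Rightarrow(iii)$. Because $\mathrm{Div}_+^sA\subseteq\mathrm{Div}_+^s\Gamma$, one always has $r_A(D)\geqslant r(D)$, so it suffices to prove that $r_A(D)\geqslant s$ implies $r(D)\geqslant s$ for every divisor $D$ and every integer $s\geqslant 0$, and I would do this by induction on $s$. The base case $s=0$ is trivial, as each side is equivalent to $|D|\neq\emptyset$. The inductive step hinges on the following claim, which is the only place the hypothesis $\mathcal L(A)=\Gamma$ is used: if $r_A(D)\geqslant s$ with $s\geqslant 1$, then $r_A(D-(v))\geqslant s-1$ for \emph{every} $v\in\Gamma$, not merely for $v\in A$. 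To prove the claim, fix an arbitrary $E'\in\mathrm{Div}_+^{s-1}A$; for each $a\in A$ the divisor $E'+(a)$ lies in $\mathrm{Div}_+^sA$, so $r_A(D)\geqslant s$ gives $|D-E'-(a)|\neq\emptyset$, i.e.\ $a\in\mathrm{supp}|D-E'|$. Hence $A\subseteq\mathrm{supp}|D-E'|$, and $\mathcal L(A)=\Gamma$ forces $\mathrm{supp}|D-E'|=\Gamma$; in particular $v\in\mathrm{supp}|D-E'|$, that is, $|(D-(v))-E'|\neq\emptyset$. Since $E'$ was arbitrary in $\mathrm{Div}_+^{s-1}A$, this is precisely $r_A(D-(v))\geqslant s-1$.

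Granting the claim, the induction closes routinely: assuming the assertion for $s-1$ and given $r_A(D)\geqslant s$ with $s\geqslant 1$, the claim yields $r_A(D-(v))\geqslant s-1$ for all $v\in\Gamma$, hence $r(D-(v))\geqslant s-1$ for all $v\in\Gamma$ by the inductive hypothesis applied to $D-(v)$; then for an arbitrary $E\in\mathrm{Div}_+^s\Gamma$, writing $E=E'+(v)$ with $v\in\mathrm{supp}E$ and $E'\in\mathrm{Div}_+^{s-1}\Gamma$ gives $|D-E|=|(D-(v))-E'|\neq\emptyset$, so $r(D)\geqslant s$. Combined with $r_A(D)\geqslant r(D)$, this gives $r_A(D)=r(D)$ for all $D$, i.e.\ $A$ is a rank-determining set. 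I expect no serious obstacle: the only points needing care are the choice of $s$ as the induction variable and the isolation of the claim above, after which everything is bookkeeping with the definitions of the three ranks and the identity $v\in\mathrm{supp}|D|\Leftrightarrow|D-(v)|\neq\emptyset$. It may streamline the write-up to first establish, once and for all, the ``peeling'' equivalences $r(D)\geqslant s\Leftrightarrow r(D-(v))\geqslant s-1$ for all $v\in\Gamma$ (and its analogue $r_A(D)\geqslant s\Leftrightarrow r_A(D-(v))\geqslant s-1$ for all $v\in A$), and then invoke them in the argument above.
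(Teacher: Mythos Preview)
Your proposal is correct and follows essentially the same approach as the paper. The paper also reduces everything to the rank-$1$ statement and then, for $(ii)\Rightarrow(iii)$, repeatedly peels off one point at a time---converting an $A$-constraint into a $\Gamma$-constraint using the equivalence $r_A(D')\geqslant 1\Leftrightarrow A\subseteq\mathrm{supp}|D'|$ together with $(i)$/$(ii)$; the only cosmetic difference is that the paper writes this out as an explicit chain $r_A(D-E_k)\geqslant s-k\Rightarrow r_A(D-E_{k+1})\geqslant s-k-1$ for $k=0,\dots,s-1$, whereas you package the same step as induction on $s$.
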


\begin{proof}
\mbox{(i)}$\Leftrightarrow$\mbox{(ii)}. $\mathcal {L}(A)=\Gamma$,
if and only if $A\subseteq\text{supp}|D|$ implies $\text{supp}|D|=\Gamma$, if and only if
$|D-E'_1|\neq\emptyset$ for all $E'_1\in \text{Div}_+^1A$, implies
$|D-E_1|\neq\emptyset$ for all $E_1\in \text{Div}_+^1\Gamma$, if and only if
$r_A(D)\geqslant 1$ implies $r(D)\geqslant 1$.

\mbox{(iii)}$\Rightarrow$\mbox{(ii)}. This follows directly from the definition
of rank-determining sets.

\mbox{(ii)}$\Rightarrow$\mbox{(iii)}. If $|D|=\emptyset$, then
$r_A(D)=r(D)=-1$. We will only consider the case $|D|\neq\emptyset$
in the following. Since $A$ is a subset of $\Gamma$, it is easy to
see that $r_A(D)\geqslant r(D)$ by definition. Therefore, to prove
$A$ is a rank-determining set, it suffices to show that
$r_A(D)\geqslant s$ implies $r(D)\geqslant s$ for each integer
$s\geqslant 0$. The case $s=0$ is trivial, since
$\text{Div}_+^0A=\text{Div}_+^0\Gamma={0}$. And the case $s=1$ is
stated in \mbox{(ii)}.

We claim that $r_A(D-E_k)\geqslant s-k$, $\forall
E_k\in\text{Div}_+^k\Gamma$, implies $r_A(D-E_{k+1})\geqslant
s-k-1$, $\forall E_{k+1}\in\text{Div}_+^{k+1}\Gamma$, for
$s\geqslant 0$ and $k=0,1,\cdots,s-1$. Equivalently, it says
$|D-E_k-E'_{s-k}|\neq\emptyset$, $\forall
E_k\in\text{Div}_+^k\Gamma$, $\forall
E'_{s-k}\in\text{Div}_+^{s-k}A$, implies
$|D-E_{k+1}-E'_{s-k-1}|\neq\emptyset$, $\forall
E_{k+1}\in\text{Div}_+^{k+1}\Gamma$, $\forall
E'_{s-k-1}\text{Div}_+^{s-k-1}A$, for $s\geqslant 0$ and
$k=0,\cdots,s-1$. This can be proved by the following deduction:

\begin{equation*}
r_A(D-E_k)\geqslant s-k, \quad\forall E_k\in\text{Div}_+^k\Gamma
\end{equation*}
\begin{equation*}
\Longleftrightarrow
\end{equation*}
\begin{equation*}
|D-E_k-E'_{s-k}|\neq\emptyset, \quad\forall
E_k\in\text{Div}_+^k\Gamma,
\quad\forall E'_{s-k}\in\text{Div}_+^{s-k}A
\end{equation*}
\begin{equation*}
\Longleftrightarrow
\end{equation*}
\begin{equation*}
|(D-E_k-E'_{s-k-1})-E'_1|\neq\emptyset,\quad
\quad\forall E_k\in\text{Div}_+^k\Gamma, \quad\forall
E'_{s-k-1}\in\text{Div}_+^{s-k-1}A,\quad\forall E'_1\in
\text{Div}_+^1A
\end{equation*}
\begin{equation*}
\text{(By \mbox{(ii)})} \Longrightarrow
\end{equation*}
\begin{equation*}
|(D-E_k-E'_{s-k-1})-E_1|\neq\emptyset,\quad
\quad\forall E_k\in\text{Div}_+^k\Gamma,\quad\forall
E'_{s-k-1}\in\text{Div}_+^{s-k-1}A,\quad\forall E_1\in
\text{Div}_+^1\Gamma
\end{equation*}
\begin{equation*}
\Longleftrightarrow
\end{equation*}
\begin{equation*}
|D-E_{k+1}-E'_{s-k-1}|\neq\emptyset, \quad\forall
E_{k+1}\in\text{Div}_+^{k+1}\Gamma,\quad\forall
E'_{s-k-1}\in\text{Div}_+^{s-k-1}A
\end{equation*}
\begin{equation*}
\Longleftrightarrow
\end{equation*}
\begin{equation*}
r_A(D-E_{k+1})\geqslant s-k-1, \quad\forall
E_{k+1}\in\text{Div}_+^{k+1}\Gamma.
\end{equation*}

Therefore, by applying the above deduction for $k$ going from
$0$ through $s-1$, we have:
\begin{align*}
r_A(D)\geqslant s \qquad \Longrightarrow\\
r_A(D-E_1)\geqslant s-1, \quad\forall
E_1\in\text{Div}_+^1\Gamma\qquad \Longrightarrow\\
\cdots\qquad \Longrightarrow\\
r_A(D-E_{s-1})\geqslant 1, \quad\forall
E_{s-1}\in\text{Div}_+^{s-1}\Gamma\qquad \Longrightarrow\\
r_A(D-E_s)\geqslant 0, \quad\forall
E_s\in\text{Div}_+^s\Gamma\qquad \Longleftrightarrow\\
r(D)\geqslant s. \qquad\qquad
\end{align*}

Thus \mbox{(ii)} is sufficient to make $A$ a rank-determining set of
$\Gamma$.
\end{proof}

\subsection{Special open sets and a criterion for $\mathcal{L}(A)$}

\begin{dfn}
A connected open subset $U$ of $\Gamma$ is called a \emph{special
open set} on $\Gamma$ if either $U=\emptyset$ or $\Gamma$, or every
connected component $X$ of $U^c$ contains a boundary point $v$ such
that $\text{outdeg}_X(v)\geqslant 2$. In particular, we say $\Gamma$
is \emph{trivial} if $U=\emptyset$ or $\Gamma$.  And we use
$\mathcal{S}_\Gamma$ to denote the set of all special open sets on
$\Gamma$.
\end{dfn}

Lemma~\ref{L:2:1} through \ref{L:2:5} present some simple properties
of special open sets.

\begin{lem} \label{L:2:1}
Let $U$ be a connected open set on $\Gamma$, and
$D=\sum_{v\in\partial U}(v)$. Then $U$ is a special open set if and only if $D$
is $U$-reduced.
\end{lem}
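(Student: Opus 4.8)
The plan is to unwind both sides of the claimed equivalence through the definition of a $U$-reduced divisor and the defining condition of a special open set, and to observe that they are literally the same condition once we identify the relevant closed connected subsets. Set $D=\sum_{v\in\partial U}(v)$; note that $D$ is effective, that $D(v)=1$ for $v\in\partial U$ and $D(v)=0$ otherwise, and that $\mathrm{supp}\,D=\partial U$. Also $U$ is connected and open, so $U^c$ is closed with finitely many connected components $X_1,\dots,X_m$, and $\partial U=\partial U^c=\bigcup_j\partial X_j$.

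First I would dispose of the trivial cases. If $U=\emptyset$ then $D=0$; since $v_0$ can be taken anywhere (here ``$U$-reduced'' means $X$-reduced for $X\subseteq U$ in the sense of the remark following Corollary~\ref{C:1:1}, or one checks directly against the definition of reduced divisor relative to the base set $U$), one sees $D=0$ is vacuously $U$-reduced, matching that $\emptyset$ is by convention a special open set. If $U=\Gamma$ then $\partial U=\emptyset$, $D=0$ again, and $U=\Gamma$ is special by fiat; so the equivalence holds in the trivial cases. Hence assume $U$ is a proper nonempty connected open set, so each $X_j$ is a nonempty closed connected subset of $U^c$ with $\partial X_j\neq\emptyset$.

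Next I would run the main argument. By the characterization of reduced divisors (Lemma~\ref{L:1:1} applied with base set $U$, using that $D$ is effective and $\mathrm{supp}\,D=\partial U$), $D$ fails to be $U$-reduced iff some closed connected subset $X$ of $U^c=\Gamma\setminus U$ has every boundary point saturated, i.e. $D(v)\geq \mathrm{outdeg}_X(v)$ for all $v\in\partial X$; and by the reduction in the proof of Lemma~\ref{L:1:1} one may take $X$ to be one of the components $X_j$ of $U^c$. Since $D(v)=1$ on $\partial U\supseteq\partial X_j$, saturation of every $v\in\partial X_j$ means $\mathrm{outdeg}_{X_j}(v)\leq 1$ for all $v\in\partial X_j$. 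Thus $D$ is $U$-reduced iff for every component $X_j$ of $U^c$ there is some $v\in\partial X_j$ with $\mathrm{outdeg}_{X_j}(v)\geq 2$ --- which is exactly the definition of $U$ being a special open set.

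The main obstacle, and the only subtle point, is bookkeeping about the base point versus the base \emph{set}: the definition of $v_0$-reduced in the excerpt is relative to a single point, while ``$U$-reduced'' for a connected open set $U$ is used (per the remark after Corollary~\ref{C:1:1}) to mean reduced relative to any/every point of $U$. I would make this precise by noting that a closed connected subset $X$ of $\Gamma\setminus v_0$ with $v_0\in U$ which witnesses non-reducedness can be enlarged to (or is contained in) a union of components of $U^c$, and conversely every component $X_j$ of $U^c$ is a closed connected subset of $\Gamma\setminus v_0$ for every $v_0\in U$; so ``$D$ is $v_0$-reduced for some $v_0\in U$'', ``$D$ is $v_0$-reduced for all $v_0\in U$'', and ``every component of $U^c$ contains a non-saturated boundary point'' all coincide here. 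With that identification the proof is a direct translation between the two definitions and needs no further estimates.
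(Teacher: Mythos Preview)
Your approach is correct: the lemma really is a direct translation between the defining condition for a special open set and that for reducedness, and the paper's one-line proof (``run Dhar's algorithm for $D$ and any point $v\in U$'') is a terser packaging of the same content --- the first step of Dhar has $S_0=\partial U$ and $\mathcal{U}_{S_0,v_0}^c=U^c$, after which Lemma~\ref{L:1:2} turns the outcome into a statement about reducedness. So the two routes are essentially the same, with yours spelled out and the paper's left implicit.

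One loose end worth tightening: your appeal to ``the reduction in the proof of Lemma~\ref{L:1:1}'' to conclude that a witness $X$ for non-reducedness may be taken to be a component $X_j$ of $U^c$ is not what that proof actually supplies (it only reduces to sets of the form $\mathcal{U}_{S,v_0}^c$ for $S\subseteq\partial U$, which need not be single components). The argument you need is short but different: saturation of every $v\in\partial X$ forces $D(v)\geq 1$, so $\partial X\subseteq\partial U$; a connectivity argument then gives $X\subseteq U^c$, whence $X\subseteq X_j$ for some $j$. If $X\subsetneq X_j$, take a path in $X_j$ from a point of $X_j\setminus X$ to $X$ and let $q$ be its first point in $X$; then $q\in\partial X\subseteq\partial U$, so one edge at $q$ enters $U\subseteq X^c$ and another (along the path) enters $X_j\setminus X\subseteq X^c$, giving $\mathrm{outdeg}_X(q)\geq 2>1=D(q)$, contradicting saturation. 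Hence $X=X_j$, and your equivalence goes through.
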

\begin{proof}
We just need to consider $U$ nontrivial. And it follows directly by
running Dhar's algorithm for $D$ and any point $v\in U$.
\end{proof}

\begin{lem} \label{L:2:2}
For $v_0\in\Gamma$, if $D$ is a $v_0$-reduced divisor, then
$\mathcal{U}_{\emph{supp}D\setminus v_0,v_0}$ is a special open set.
\end{lem}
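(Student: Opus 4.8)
The plan is to verify the defining property of a special open set directly, using Dhar's algorithm as the bridge. Set $U=\mathcal{U}_{\operatorname{supp}D\setminus v_0,v_0}$. If $U$ is trivial there is nothing to prove, so assume $U$ is nontrivial; in particular $\operatorname{supp}D\setminus v_0\neq\emptyset$. Let $X$ be a connected component of $U^c$. Since $D$ is $v_0$-reduced and $X$ is a closed connected subset of $\Gamma\setminus v_0$ (note $v_0\in U$, so $v_0\notin X$), the definition of $v_0$-reduced divisors supplies a non-saturated boundary point $v\in\partial X$, i.e.\ $D(v)<\operatorname{outdeg}_X(v)$. Because $D$ is effective on $\Gamma\setminus v_0$ we have $D(v)\geqslant 0$, hence $\operatorname{outdeg}_X(v)\geqslant 1$; but that is not yet the $\geqslant 2$ we need. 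The heart of the argument is therefore to upgrade $\operatorname{outdeg}_X(v)\geqslant 1$ to $\operatorname{outdeg}_X(v)\geqslant 2$, and this is where the structure of $U$ as a complement of $\operatorname{supp}D\setminus v_0$ enters.

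The key observation is that $v\in\partial X\subseteq\partial U$, and since $U$ is a connected component of $(\operatorname{supp}D\setminus v_0)^c$ whose boundary points lie in $\operatorname{supp}D\setminus v_0$, every boundary point of $U$ — in particular $v$ — has at least one segment of $U$ emanating from it, so $\operatorname{outdeg}_U(v)\geqslant 1$. Now I would argue that the segments of $U$ leaving $v$ are distinct from the segments of $X^c$ leaving $v$ that go \emph{into} $U$; more carefully, since $U\subseteq X^c$ and $U$ abuts $v$, at least one of the $\operatorname{outdeg}_X(v)$ internally disjoint segments of $X^c$ emanating from $v$ enters $U$. If $\operatorname{outdeg}_X(v)=1$, then the \emph{only} segment of $X^c$ leaving $v$ enters $U$, which forces the entire component $X$ to be "hanging off" $v$ with $v$ its unique attachment point, and $D(v)<\operatorname{outdeg}_X(v)=1$ gives $D(v)=0$. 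But then $v\notin\operatorname{supp}D\setminus v_0$, so $v$ is not a boundary point of $(\operatorname{supp}D\setminus v_0)^c$ at all — contradicting $v\in\partial U$. (The one edge case to handle is $v=v_0$, but $v_0\in U$ forces $v_0\notin\partial X$, so this does not arise.) Hence $\operatorname{outdeg}_X(v)\geqslant 2$, as required.

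I expect the main obstacle to be the topological bookkeeping in the previous paragraph: making precise the claim that when $\operatorname{outdeg}_X(v)=1$ the single segment of $X^c$ at $v$ must lie in $U$, and hence $D(v)=0$ forces $v\notin\partial U$. The cleanest way to organize this is probably to show the contrapositive — that a boundary point $v$ of $U$ with $D(v)=0$ cannot exist, since $v\in\partial U\subseteq\overline{\operatorname{supp}D\setminus v_0}$ and $\operatorname{supp}D$ is finite, so $\partial U\subseteq\operatorname{supp}D\setminus v_0$, giving $D(v)\geqslant 1$ for all $v\in\partial U$ — wait, this already does most of the work: combined with the non-saturated boundary point $v$ of $X$ from $v_0$-reducedness, $D(v)\geqslant 1$ and $D(v)<\operatorname{outdeg}_X(v)$ together yield $\operatorname{outdeg}_X(v)\geqslant 2$ immediately. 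So in the write-up I would lead with the finiteness fact $\partial U\subseteq\operatorname{supp}D\setminus v_0$ (immediate from $U=\mathcal{U}_{\operatorname{supp}D\setminus v_0,v_0}$ being a connected component of the complement of that finite set), then invoke $v_0$-reducedness on each component $X$ of $U^c$ to extract a non-saturated $v\in\partial X\subseteq\partial U$, and conclude $\operatorname{outdeg}_X(v)\geqslant D(v)+1\geqslant 2$.
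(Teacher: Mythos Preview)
Your final argument is correct and matches the paper's in spirit: the paper passes to the $\{0,1\}$-valued divisor $D'=\sum_{v\in\operatorname{supp}D\setminus v_0}(v)$, observes that $D'$ is still $v_0$-reduced, and then invokes Lemma~\ref{L:2:1}, whereas you apply $v_0$-reducedness of $D$ directly to each component $X$ of $U^c$ and use $\partial X\subseteq\partial U\subseteq\operatorname{supp}D\setminus v_0$ to get $D(v)\geqslant 1$, hence $\operatorname{outdeg}_X(v)\geqslant D(v)+1\geqslant 2$. These are equivalent unpackings of the same observation, so your approach is essentially the same as the paper's.
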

\begin{proof}
Let $D'=\sum_{v\in\text{supp}D\setminus v_0}(v)$. Since $D$ is a
$v_0$-reduced divisor, $D'$ must also be $v_0$-reduced. Thus
$\mathcal{U}_{\text{supp}D\setminus v_0,v_0}$ is a special open set
by Lemma~\ref{L:2:1}.
\end{proof}

\begin{lem} \label{L:2:3}
Let $\Gamma$ be a metric graph of genus $g$. If $U$ is a nontrivial special open set on $\Gamma$, then
$\overline{U}$ has genus at least $1$. In addition, there exist at most $g$ disjoint nonempty special open sets on $\Gamma$.
\end{lem}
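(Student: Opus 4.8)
Fix a nontrivial special open set $U$. Choose a connected component $X$ of $U^c$ and a boundary point $v\in\partial X$ with $\mathrm{outdeg}_X(v)\geq 2$, which exists by the definition of a special open set. So there are two internally disjoint segments $s_1,s_2$ of $X^c$ meeting $X$ only at $v$. Since the components of $U^c$ are pairwise disjoint closed sets, $s_i\setminus\{v\}$ cannot lie in another component of $U^c$ (else $v$ would lie in two components), so $s_i\setminus\{v\}\subseteq U$; shrinking and refining the vertex set we may take $s_i=[v,x_i]$ with $x_i\neq v$, the two germs at $v$ in distinct directions, and $(v,x_i]\subseteq U$. Now $v\in\partial U$, so $v\notin U$, and $U$ is connected; hence $U$ lies in a single connected component $C$ of $\overline{U}\setminus\{v\}$, and $x_1,x_2\in C$. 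Join $x_1$ to $x_2$ by a path $\gamma$ in $C$ (so $v\notin\gamma$); then the closed curve $W=s_1\cdot\gamma\cdot\overline{s_2}$ traverses the $1$-cell at $v$ in the direction of $s_1$ exactly once, while $\gamma$ and $s_2$ traverse it zero times. Since $\Gamma$ is one-dimensional, a nullhomologous $1$-cycle in $\overline{U}$ is the zero chain, so $[W]\neq 0$ in $H_1(\overline{U})$, i.e. $\overline{U}$ has genus at least $1$. (One can alternatively phrase this via Lemma~\ref{L:2:1} and the trace of Dhar's algorithm, but the direct argument is shorter.)

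\textbf{Plan for the second assertion.} Let $U_1,\dots,U_m$ be pairwise disjoint nonempty special open sets. If some $U_i$ is trivial it must equal $\Gamma$, forcing $m=1$, and the statement is immediate; so assume each $U_i$ is nontrivial. By the first part, each $\overline{U_i}$ contains an embedded cycle $\gamma_i$. Since $\partial U_i$ is a finite set, $\gamma_i$ is not contained in $\partial U_i$, so $\gamma_i\cap U_i$ is a nonempty relatively open subset of the $1$-complex $\gamma_i$; passing to a common refinement of $\Gamma$ making every $\gamma_i$ a simplicial subcomplex, $\gamma_i$ contains an edge $e_i$ with $e_i^o\subseteq U_i$. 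Because the $U_i$ are pairwise disjoint and open, $\overline{U_j}\cap U_i=\emptyset$ for $j\neq i$, hence $e_i^o\cap\gamma_j=\emptyset$; thus $e_i$ occurs in the $1$-cycle $\gamma_i$ with coefficient $\pm1$ and in every $\gamma_j$ with $j\neq i$ with coefficient $0$. Reading off the coefficients on $e_1,\dots,e_m$ gives a linear map $H_1(\Gamma;\mathbb{Q})\to\mathbb{Q}^m$ sending $[\gamma_1],\dots,[\gamma_m]$ to signed standard basis vectors, so these classes are linearly independent and $m\leq\dim H_1(\Gamma;\mathbb{Q})=g$.

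\textbf{Main obstacle.} The real work is in the first assertion: one must distill from the somewhat indirect definition of special open set the single usable fact that there is a boundary point $v$ with two edge-germs pointing into $U$, and then combine this with the connectedness of $U$ --- crucially using $v\notin U$ --- to produce a genuine cycle rather than merely a closed walk. The one point requiring care there is checking homological nontriviality of $W$ (equivalently, that the germ $s_1$ is not a bridge of $\overline{U}$), which is where $v\notin U$ enters decisively. Once the first part is in hand, the second is a routine linear-independence count in the cycle space; the only thing to watch is that distinct $\overline{U_i}$ can overlap in at most finitely many boundary points, so the witnessing edges $e_i$ are genuinely private to each $\gamma_i$.
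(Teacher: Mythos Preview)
Your argument is correct and follows essentially the same idea as the paper, just unpacked. For the first assertion the paper gives the one-line contrapositive --- if $\overline{U}$ were a tree then every $v\in\partial U$ would satisfy $\mathrm{outdeg}_{U^c}(v)=1$, contradicting the definition of a special open set --- which is exactly your cycle construction read backwards (two edge-germs into $U$ at $v$, plus connectedness of $U$, force a cycle in $\overline{U}$); for the second assertion the paper writes only ``it follows immediately,'' and your linear-independence argument via the private edges $e_i$ is precisely the routine content behind that phrase. One incidental simplification: since $U$ itself is open, connected, and avoids $v$, you can take the path $\gamma$ inside $U$ directly rather than detouring through the component $C$ of $\overline{U}\setminus\{v\}$.
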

\begin{proof}
If $\overline{U}$ is a tree, then for every $v\in\partial U$,
$\text{outdeg}_{U^c}(v)=1$, which contradicts the definition of
special open sets. And it follows immediately that $\Gamma$ can sustain at most $g$ disjoint nonempty special open set.
\end{proof}

\begin{lem} \label{L:2:4}
Let $X$ be a nonempty connected subset of $\Gamma$, and $|D|$ a
linear system such that $\emph{supp}|D|\bigcap X=\emptyset$. Then
there exists a special open set $U$ such that $X\subseteq
U\subseteq(\emph{supp}|D|)^c$.
\end{lem}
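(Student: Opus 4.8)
The plan is to build the special open set $U$ by starting from $X$ and enlarging it, pushing its boundary outward until every boundary component "sees" a cycle, while staying inside $(\text{supp}|D|)^c$. Concretely, I would proceed via the reduced-divisor machinery already developed. Pick a point $v_0 \in X$ (recall we identify $\{v_0\}$ with $v_0$). Since $X \subseteq (\text{supp}|D|)^c$, in particular $v_0 \notin \text{supp}|D|$, so by Corollary~\ref{C:1:1}\mbox{(ii)} the unique $v_0$-reduced divisor $D_{v_0}$ in $|D|$ satisfies $D_{v_0}(v_0)=0$ and $\mathcal{U}_{\text{supp}D_{v_0},v_0}$ is nonempty and disjoint from $\text{supp}|D|$. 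Set
\begin{equation*}
U = \mathcal{U}_{\text{supp}D_{v_0}\setminus v_0,\,v_0} = \mathcal{U}_{\text{supp}D_{v_0},\,v_0}.
\end{equation*}
By Lemma~\ref{L:2:2}, $U$ is a special open set, and $U \subseteq (\text{supp}|D|)^c$ by the last sentence of Corollary~\ref{C:1:1}\mbox{(ii)}.

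It remains to check $X \subseteq U$. This is where a little care is needed: a priori $U$ is only guaranteed to be connected, open, disjoint from $\text{supp}D_{v_0}$, and to contain $v_0$. I would argue that $X \cap \text{supp}D_{v_0} = \emptyset$: indeed $\text{supp}D_{v_0} \subseteq \text{supp}|D|$ (since $D_{v_0} \in |D|$), and $X \cap \text{supp}|D| = \emptyset$ by hypothesis. Hence $X$ is a connected subset of $(\text{supp}D_{v_0})^c$ containing the point $v_0$, so $X$ lies in the connected component of $(\text{supp}D_{v_0})^c$ containing $v_0$, which is exactly $U$ (here I use that $X$ is connected and $v_0 \in X \cap U$, together with the defining maximality of $\mathcal{U}_{S,v_0}$ as the connected component of $S^c$ through $v_0$). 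This gives $X \subseteq U \subseteq (\text{supp}|D|)^c$, as required.

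The only genuine obstacle is making sure the degenerate case does not sneak in: we must know $U \neq \emptyset$, which is why I invoked Corollary~\ref{C:1:1}\mbox{(ii)} rather than merely running the reduced-divisor algorithm — it is precisely the statement that when $|D| \neq \emptyset$ and $v_0 \notin \text{supp}|D|$, the set $\mathcal{U}_{\text{supp}D_{v_0},v_0}$ is nonempty. Since $X$ is nonempty we may choose $v_0$, and since $\text{supp}|D| \cap X = \emptyset$ forces $\text{supp}|D| \neq \Gamma$, the hypothesis $v_0 \notin \text{supp}|D|$ of Corollary~\ref{C:1:1}\mbox{(ii)} is met; note also $|D| \neq \emptyset$ follows from $\text{supp}|D|$ being a well-defined nonempty-complement set in the statement. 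With these pieces in place the proof is a two-line assembly, so I would present it essentially as: choose $v_0 \in X$, take $U = \mathcal{U}_{\text{supp}D_{v_0},v_0}$, cite Lemma~\ref{L:2:2} for specialness and Corollary~\ref{C:1:1}\mbox{(ii)} for nonemptiness and for the containment $U \subseteq (\text{supp}|D|)^c$, and observe $X \subseteq U$ by connectedness.
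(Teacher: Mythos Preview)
Your proof is correct and follows essentially the same approach as the paper: pick $v\in X$, let $D'$ be the $v$-reduced divisor in $|D|$, and take $U=\mathcal{U}_{\text{supp}D',v}$, citing Corollary~\ref{C:1:1} and Lemma~\ref{L:2:2}. You are more explicit than the paper in verifying $X\subseteq U$ (via connectedness of $X$ and the maximality defining $\mathcal{U}_{S,v_0}$), which the paper leaves implicit in the phrase ``desired properties.''

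One small quibble: your justification that $|D|\neq\emptyset$ is not quite right, since $\text{supp}|D|$ is perfectly well-defined (as the empty set) when $|D|=\emptyset$. In that degenerate case you can simply take $U=\Gamma$, which is a special open set by definition; alternatively, note that the paper's own proof also tacitly assumes $|D|\neq\emptyset$ when it speaks of ``the $v$-reduced divisor in $|D|$,'' so this edge case is not a real obstacle.
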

\begin{proof}
Let $v\in X$ and $D'$ be the $v$-reduced divisor in $|D|$. Then by
Corollary~\ref{C:1:1} and Lemma~\ref{L:2:2},
$\mathcal{U}_{\text{supp}_{D'},v}$ is a special open set with the desired properties.
\end{proof}

\begin{lem} \label{L:2:5}
Let $D$ be a divisor on $\Gamma$ and $|D|$ the corresponding
linear system. Then $(\emph{supp}|D|)^c$ is a disjoint union of finitely many nonempty special open sets.
\end{lem}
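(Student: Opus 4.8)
The plan is to show that $(\operatorname{supp}|D|)^c$ is open, that each of its connected components is a special open set, and that there are only finitely many such components. Openness is the easy part: $\operatorname{supp}|D|$ is a finite union of supports of effective divisors $D'\in|D|$ — in fact, by Theorem~\ref{T:unique} and Corollary~\ref{C:1:1}, a point $v$ lies in $\operatorname{supp}|D|$ iff the $v$-reduced divisor $D_v$ in $|D|$ has $D_v(v)>0$ — but the cleanest route is to invoke Lemma~\ref{L:2:4} directly. Namely, for any point $v\notin\operatorname{supp}|D|$, the set $X=\{v\}$ is a nonempty connected subset disjoint from $\operatorname{supp}|D|$, so Lemma~\ref{L:2:4} produces a special open set $U_v$ with $v\in U_v\subseteq(\operatorname{supp}|D|)^c$. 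This simultaneously shows $(\operatorname{supp}|D|)^c$ is open (it is a union of the open sets $U_v$) and that every point of it lies in some special open set contained in the complement.

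Next I would identify the connected components. Let $W$ be a connected component of $(\operatorname{supp}|D|)^c$, pick $v\in W$, and take $U_v$ as above; since $U_v$ is connected, contains $v$, and is contained in $(\operatorname{supp}|D|)^c$, it is contained in $W$. Conversely I claim $W\subseteq U_v$, so that $W=U_v$ is itself a special open set. This is where I would use the reduced-divisor machinery more carefully: take $D_v$ the $v$-reduced divisor in $|D|$ and set $U_v=\mathcal{U}_{\operatorname{supp}D_v,v}$ as in the proof of Lemma~\ref{L:2:4}. By Corollary~\ref{C:1:1}(ii), every point $u\in U_v$ satisfies $u\notin\operatorname{supp}|D|$ and $D_v$ is also $u$-reduced, hence $U_v=\mathcal{U}_{\operatorname{supp}D_v,u}$ for every such $u$; combined with maximality of the connected set $U_v$ avoiding $\operatorname{supp}D_v$, this pins down $U_v$ as the full connected component of $(\operatorname{supp}|D|)^c$ through $v$. (One must check that $(\operatorname{supp}D_v)^c$ and $(\operatorname{supp}|D|)^c$ have the same component through $v$: the inclusion $\operatorname{supp}|D|\supseteq\operatorname{supp}D_v$ gives $(\operatorname{supp}|D|)^c\subseteq(\operatorname{supp}D_v)^c$, so $W\subseteq U_v$; the reverse inclusion $U_v\subseteq W$ was just shown.) So each component of $(\operatorname{supp}|D|)^c$ is a special open set, and it is nonempty by construction.

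Finally, finiteness. The components of $(\operatorname{supp}|D|)^c$ are pairwise disjoint nonempty special open sets, and $\Gamma$ has genus $g<\infty$, so by the second assertion of Lemma~\ref{L:2:3} there are at most $g$ of them. This completes the argument. I expect the main obstacle to be the middle step — verifying that a component $W$ of $(\operatorname{supp}|D|)^c$ coincides with the special open set $U_v$ attached to one of its points, rather than merely containing it; the key input is Corollary~\ref{C:1:1}(ii), which guarantees that $U_v$ is "the same" special open set no matter which of its points is used as the base, so $U_v$ cannot be properly enlarged inside $(\operatorname{supp}|D|)^c$ without meeting $\operatorname{supp}D_v\subseteq\operatorname{supp}|D|$.
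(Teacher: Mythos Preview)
Your proposal is correct and follows essentially the same approach as the paper: both arguments attach to each $v\notin\operatorname{supp}|D|$ the special open set $U_v=\mathcal{U}_{\operatorname{supp}D_v,v}$ (via Lemma~\ref{L:2:2} and Corollary~\ref{C:1:1}), show these sets exhaust and partition $(\operatorname{supp}|D|)^c$, and then invoke Lemma~\ref{L:2:3} for finiteness. The only cosmetic difference is that you prove each $U_v$ coincides with the connected component $W$ of $(\operatorname{supp}|D|)^c$ through $v$ by the double inclusion $U_v\subseteq W$ (from Corollary~\ref{C:1:1}(ii)) and $W\subseteq U_v$ (from $\operatorname{supp}D_v\subseteq\operatorname{supp}|D|$), whereas the paper instead shows directly that any two such sets $U_{v_1},U_{v_2}$ are either equal or disjoint (using that a common point forces $D_{v_1}=D_{v_2}$ by uniqueness of reduced divisors); these are equivalent ways of establishing the same partition.
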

\begin{proof}
Let $v_1$ and $v_2$ be two points in $(\text{supp}|D|)^c$. Let $D_1$ and $D_2$ be elements of $|D|$ that are $v_1$-reduced and $v_2$-reduced, respectively. Let $U_1=\mathcal{U}_{\text{supp}D_1,v_1}$ and $U_2=\mathcal{U}_{\text{supp}D_2,v_2}$.
Then by Lemma~\ref{L:2:2}, $U_1$ and $U_2$ are special open sets. In addition, we have either $U_1=U_2$ or $U_1\bigcap U_2=\emptyset$ by Corollary~\ref{C:1:1}. Thus $(\text{supp}|D|)^c$ must be a disjoint union of nonempty special open sets. And we know from Lemma~ \ref{L:2:3} that there are only finitely many of them.
\end{proof}

Based on the notion of special open sets, we formulate a sufficient
condition for $v$ to belong to $\mathcal{L}(A)$, as stated in the following
theorem. (We will show in Theorem~\ref{T:criterion} that it
is also a necessary condition.)
\begin{thm} \label{T:precriterion}
Let $v\in \Gamma$ and let $A$ be a nonempty subset of $\Gamma$. Then
$v\in\mathcal{L}(A)$ if for all special open sets $U$ containing
$v$, we have $A\bigcap U\neq \emptyset$. Moreover,
\begin{equation*}
\mathcal{L}(A)\supseteq \bigcap_{U\in\mathcal{S}_\Gamma,U\bigcap
A=\emptyset} U^c.
\end{equation*}
In addition, $A$ is a rank-determining set if all nonempty special
open sets intersect $A$.
\end{thm}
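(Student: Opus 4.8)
The plan is to show the containment $\mathcal{L}(A)\supseteq \bigcap_{U\in\mathcal{S}_\Gamma,\,U\cap A=\emptyset} U^c$ directly, since the first sentence of the theorem is just the statement that every $v$ in that intersection lies in $\mathcal{L}(A)$, and the last sentence then follows immediately: if every nonempty special open set meets $A$, the intersection on the right is all of $\Gamma$ (the index set consists only of the trivial special open set $\emptyset$, whose complement is $\Gamma$), so $\mathcal{L}(A)=\Gamma$, and by Proposition~\ref{P:2:1} this makes $A$ a rank-determining set.

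So fix a point $v$ lying in $U^c$ for every special open set $U$ disjoint from $A$; equivalently, every special open set containing $v$ meets $A$. I must show $v\in\mathcal{L}(A)$, i.e.\ that for every divisor $D$ with $A\subseteq\mathrm{supp}|D|$ we have $v\in\mathrm{supp}|D|$. Argue by contraposition: suppose $v\notin\mathrm{supp}|D|$ for some $D$ with $|D|\neq\emptyset$ (if $|D|=\emptyset$ there is nothing to prove, and the case $|D|\ne\emptyset$, $v\notin\operatorname{supp}|D|$ is the only way $A\subseteq\operatorname{supp}|D|$ can fail to force $v\in\operatorname{supp}|D|$ after we produce the separating set). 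Apply Corollary~\ref{C:1:1}(ii) to the $v$-reduced divisor $D_v$ in $|D|$: the set $\mathcal{U}_{\mathrm{supp}D_v,\,v}$ is nonempty and is contained in $(\mathrm{supp}|D|)^c$. By Lemma~\ref{L:2:2}, $U:=\mathcal{U}_{\mathrm{supp}D_v,\,v}$ is a special open set, and it contains $v$. Hence by our hypothesis on $v$, we have $A\cap U\neq\emptyset$, so $A\not\subseteq(\mathrm{supp}|D|)^c{}^c=\mathrm{supp}|D|$ — wait, more carefully: $U\subseteq(\mathrm{supp}|D|)^c$, and $A$ meets $U$, so $A$ meets $(\mathrm{supp}|D|)^c$, i.e.\ $A\not\subseteq\mathrm{supp}|D|$. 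This is the contrapositive of ``$A\subseteq\mathrm{supp}|D|\Rightarrow v\in\mathrm{supp}|D|$'', which is exactly the defining condition $v\in\mathcal{L}(A)$.

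The only genuine content is the production of the separating special open set, and that is handed to us by Corollary~\ref{C:1:1}(ii) together with Lemma~\ref{L:2:2} — so there is no real obstacle here; the argument is essentially a bookkeeping assembly of the already-established facts about $v$-reduced divisors. The one point to state carefully is the reduction to the case $v\notin\mathrm{supp}|D|$ with $|D|\neq\emptyset$: if $|D|=\emptyset$ then trivially $A\not\subseteq\mathrm{supp}|D|=\emptyset$ (as $A\neq\emptyset$), so the implication defining membership in $\mathcal{L}(A)$ holds vacuously. With both cases handled, the displayed containment follows, and the final assertion about rank-determining sets is the specialization discussed above.
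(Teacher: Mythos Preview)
Your proof is correct and follows essentially the same route as the paper's: both produce, for a point $v\notin\mathrm{supp}|D|$, a special open set containing $v$ and disjoint from $\mathrm{supp}|D|$, then conclude via Proposition~\ref{P:2:1}. The only difference is cosmetic: the paper cites Lemma~\ref{L:2:4} (whose proof is precisely Corollary~\ref{C:1:1} plus Lemma~\ref{L:2:2}), whereas you invoke those two ingredients directly; note also that your $\mathcal{U}_{\mathrm{supp}D_v,\,v}$ agrees with the $\mathcal{U}_{\mathrm{supp}D_v\setminus v,\,v}$ of Lemma~\ref{L:2:2} because $D_v(v)=0$ in your situation.
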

\begin{proof}
Suppose $|D|$ is a linear system such that
$A\subseteq\text{supp}|D|$. Then by Lemma~\ref{L:2:4}, for every
$v\notin\text{supp}|D|$, there exists a neighborhood $U$ of $v$ which is a special open set disjoint from $\text{supp}|D|$. Thus if all
special open sets containing $v$ intersect $A$, then
$A\subseteq\text{supp}|D|$ implies $v\in\text{supp}|D|$, which means
$v\in\mathcal{L}(A)$. It follows immediately that
\begin{equation*}
\mathcal{L}(A)\supseteq \bigcap_{U\in\mathcal{S}_\Gamma,U\bigcap
A=\emptyset} U^c.
\end{equation*}

If all nonempty special open sets intersect $A$, then
$\mathcal{L}(A)=\Gamma$. Thus $A$ is a rank-determining set by
Proposition~\ref{P:2:1}.
\end{proof}

\begin{prop} \label{P:2:2}
Let $U$ be a nonempty connected open proper subset of $\Gamma$ such
that $\overline{U}$ is a tree. Then $\overline{U}\subseteq
\mathcal{L}(\partial U)$.
\end{prop}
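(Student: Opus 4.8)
The plan is to show that $\overline{U}$ lies inside the intersection $\bigcap U^c$ appearing in Theorem~\ref{T:precriterion}, by arguing that no nontrivial special open set can ever meet $\overline{U}$ without already meeting $\partial U$. Concretely, let $v\in\overline{U}$ and let $W$ be any special open set containing $v$; I want to conclude $W\cap\partial U\neq\emptyset$, so that Theorem~\ref{T:precriterion} gives $v\in\mathcal{L}(\partial U)$. Suppose for contradiction that $W$ is disjoint from $\partial U$. Since $v\in\overline{U}$ and $W$ is an open neighborhood of $v$ disjoint from $\partial U=\partial U^c$, the connected set $W$ must lie entirely inside $U$ (it cannot straddle the boundary, and it contains a point in $\overline{U}$). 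So we are reduced to the key claim: a special open set $W$ cannot be contained in the open set $U$ when $\overline{U}$ is a tree.

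First I would set up the topology: $\overline{U}$ is a subgraph (connected and closed) of genus $0$, so with respect to the induced vertex set it is literally a finite tree, and every point of $U=\overline{U}\setminus\partial U$ has the property that removing it, or removing a small arc, does not create "extra" cycles — there are none to begin with. Now take a connected component $X$ of $W^c$ (the complement taken in all of $\Gamma$); by the definition of a special open set, $X$ has a boundary point $x$ with $\operatorname{outdeg}_X(x)\geq 2$. Since $W\subseteq U$, we have $U^c\subseteq W^c$, and $U^c$ being connected to the rest of $\Gamma$ through $\partial U$ means $U^c$ sits inside one particular component of $W^c$; call it $X_0$. The remaining components $X$ of $W^c$ are then forced to lie entirely inside $\overline{U}$, i.e., inside the tree. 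The heart of the argument is that such an $X\subseteq\overline{U}$ is itself a subtree, hence for every boundary point $v$ of $X$ one has $\operatorname{outdeg}_{X^c}(v)=1$ (this is exactly the computation made in the proof of Lemma~\ref{L:2:3}: a point on the boundary of a subtree of a tree has only one edge leaving it within that ambient tree, and there is nothing outside the tree for it to reach since $X\subseteq\overline{U}$ and $\partial X\subseteq U$). This contradicts the defining property of $W$ being special, which demands $\operatorname{outdeg}_X(v)\geq 2$ at some boundary point. Therefore $W^c$ has the single component $X_0\supseteq U^c$, forcing $W\subseteq U$ to actually equal $\emptyset$ or, tracing through, to fail the special condition unless $W$ is trivial; a trivial $W$ is either $\emptyset$ (which does not contain $v$) or $\Gamma$ (which is not contained in the proper subset $U$). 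Either way we reach a contradiction, so every special open set containing a point of $\overline{U}$ must meet $\partial U$.

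Having established that, Theorem~\ref{T:precriterion} applied with $A=\partial U$ yields $v\in\mathcal{L}(\partial U)$ for every $v\in\overline{U}$, which is precisely $\overline{U}\subseteq\mathcal{L}(\partial U)$.

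The main obstacle I anticipate is the bookkeeping in the middle step: correctly handling the component $X_0$ that contains $U^c$ and ruling out the degenerate possibilities (e.g., $W$ containing boundary points of $U$ implicitly through closure, or $U^c$ being empty when $U=\Gamma$ — excluded here since $U$ is proper). One must be careful that "special open set contained in $U$" genuinely forces every non-$X_0$ component of $W^c$ into the tree $\overline{U}$ and that the out-degree computation is carried out with respect to the correct ambient space; the cleanest route is probably to invoke Lemma~\ref{L:2:1} (that $W$ is special iff $\sum_{v\in\partial W}(v)$ is $W$-reduced) and run Dhar's algorithm from a point of $W$, observing that in a tree the burning process sweeps everything, so no nonempty special open set fits inside $\overline{U}$.
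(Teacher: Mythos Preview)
Your approach is the same as the paper's: reduce to Theorem~\ref{T:precriterion} by showing that no nontrivial special open set $W$ can be contained in $U$ when $\overline{U}$ is a tree. The paper does this in one line by invoking Lemma~\ref{L:2:3} directly (a nontrivial special open set has closure of genus $\geq 1$, hence cannot sit inside the tree $\overline{U}$), whereas you unpack that lemma through a component analysis of $W^c$.

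Your component analysis, however, has an avoidable wrinkle. You separate off a component $X_0\supseteq U^c$ (tacitly assuming $U^c$ is connected, which need not hold) and argue only for the \emph{other} components $X\subseteq\overline{U}$; the treatment of $X_0$ itself is then hand-waved (``forcing $W$ to equal $\emptyset$ or \ldots''). The clean observation---which you do recognize at the end---is that the relevant constraint depends only on $\overline{W}$, not on where the components of $W^c$ live: any boundary point of $W^c$ lies in $\overline{W}\subseteq\overline{U}$, and two outgoing edges into the connected set $W$ would create a cycle in $\overline{W}$. That is exactly Lemma~\ref{L:2:3}, and citing it (as the paper does) removes the need for the $X_0$/non-$X_0$ split entirely.
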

\begin{proof}
$\partial U$ is nonempty since $U$ is a proper subset of $\Gamma$.
Then by Lemma~\ref{L:2:3}, for every $v\in U$, if $U'$ is a critical
open set containing $v$, then $\overline{U'}$ has genus at least $1$
unless possibly $U'$ is the whole graph. Thus $U'$ must intersect
$\partial U$, since any connected closed subset of $\overline{U}$
has genus 0. Therefore we have $v\in\mathcal{L}(\partial U)$ by
Theorem~\ref{T:precriterion}.
\end{proof}

\begin{ex}
(a) By Proposition~\ref{P:2:2}, we immediately have
$[w_i,w_j]\subseteq\mathcal{L}(w_i,w_j)$ for two adjacent vertices
$w_i$ and $w_j$ (note that it doesn't matter whether there are
multiple edges between $w_i$ and $w_j$). Thus
$\mathcal{L}(\Omega)=\Gamma$, which implies $\Omega$ is a
rank-determining set of $\Gamma$, as claimed in \textbf{Theorem~\ref{T:vertex}}.

(b) Let $A$ be a finite set formed by choosing one internal point from each
edge. Then it is also easy to show that $A$ is a rank-determining
set using Proposition~\ref{P:2:2}.
\end{ex}

\begin{prop} \label{P:2:3}
Let $U$ be a nonempty connected open proper subset of a metric graph
$\Gamma$ such that $\overline{U}$ has genus $g'$. Let $T$ be a
spanning tree of $\overline{U}$. Then $U\setminus T$ is a disjoint
union of $g'$ open segments. Choosing one point from each of these
segments, we get a finite set $B$ of cardinality $g'$. Then
$\overline{U}\subseteq \mathcal{L}(\partial U, B)$
\end{prop}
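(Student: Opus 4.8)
The plan is to reduce Proposition~\ref{P:2:3} to Theorem~\ref{T:precriterion} by showing that every special open set containing a point $v\in\overline{U}$ must intersect $\partial U\cup B$. So suppose $U'$ is a nontrivial special open set with $v\in U'$; I must produce a point of $\partial U\cup B$ inside $U'$. Assume for contradiction that $U'\cap(\partial U\cup B)=\emptyset$. The first step is to understand how $U'$ sits relative to $\overline{U}$: since $v\in\overline{U}$ and $U'$ is connected and avoids $\partial U$, the connected component of $U'\cap\overline{U}$ containing $v$ is actually an open-in-$\overline U$ neighborhood whose closure stays inside $\overline U$ (a path in $U'$ leaving $\overline U$ would have to cross $\partial U\subseteq\overline U$). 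Let me write $W$ for this component; so $W\subseteq U'$, $W$ is connected and open, and $\overline W\subseteq\overline U$. Because $U'$ is special, every connected component $X$ of $(U')^c$ has a boundary point of out-degree $\geqslant 2$; I will use this to force $\overline W$, hence $\overline U$, to contain ``too much'' cycle structure unless $U'$ meets $B$.

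The key step is the count. Let $T$ be the given spanning tree of $\overline U$; by hypothesis $U\setminus T$ consists of exactly $g'$ open segments (one per independent cycle of $\overline U$), and $B$ picks one interior point from each. If $U'\cap B=\emptyset$, then for each of these $g'$ segments, the point of $B$ on it lies outside $U'$, so that segment is ``cut'' by $(U')^c$: more precisely $U'$ cannot contain the whole segment, and since $\partial U\cap U'=\emptyset$ the edges of $T$ incident to $\partial U$ are also outside $U'$'s interior in the relevant way. I would argue that $\overline{U'}\cap\overline U$, being a closed subset of $\overline U$ that misses one interior point of each of the $g'$ ``ear'' segments of the ear decomposition $T\cup(\text{segments})$, must be a disjoint union of subtrees of $\overline U$ — it has genus $0$. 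But $U'$ is a nontrivial special open set, so by Lemma~\ref{L:2:3} its closure has genus at least $1$; the genus-$1$ behavior has to be realized somewhere, and since $\overline{U'}\cap\overline U$ is a forest, there must be a component $X$ of $(U')^c$ whose out-degree-$\geqslant 2$ boundary point $x$ lies in $\overline U$ — yet then the two internally disjoint segments of $U'$ emanating from $x$, together with a path in the forest $\overline{U'}\cap\overline U$, would close up a cycle inside $\overline U$ avoiding all $B$-points, i.e. a cycle in the forest obtained by deleting those $g'$ interior points, which is impossible since deleting one interior point from each ear kills every cycle of $\overline U$.

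So the contradiction is reached, giving $U'\cap(\partial U\cup B)\neq\emptyset$ for every special open set $U'\ni v$, and Theorem~\ref{T:precriterion} yields $v\in\mathcal{L}(\partial U,B)$; as $v\in\overline U$ was arbitrary, $\overline U\subseteq\mathcal{L}(\partial U,B)$. The main obstacle I anticipate is making the genus bookkeeping in the key step fully rigorous: one needs to handle the case where $U'$ protrudes outside $\overline U$ (controlled by $\partial U\subseteq U'{}^c$), the case where several components of $(U')^c$ meet $\overline U$, and the precise sense in which ``deleting one interior point from each of the $g'$ ear-segments of $T$ makes $\overline U$ a forest'' — this last fact should follow from the ear-decomposition structure, since each such segment is exactly one independent cycle's worth of $\overline U\setminus T$, but it deserves a clean statement. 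Everything else (reduction to the precriterion, the genus-$\geqslant 1$ input from Lemma~\ref{L:2:3}, nonemptiness of $\partial U$) is routine.
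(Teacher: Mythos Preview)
Your overall strategy matches the paper's: reduce to Theorem~\ref{T:precriterion} by showing that every special open set $U'$ containing a point of $\overline U$ meets $\partial U\cup B$. You also correctly obtain $U'\subseteq U$ once $U'\cap\partial U=\emptyset$; the worry about $U'$ ``protruding outside $\overline U$'' and the auxiliary set $W$ are unnecessary, since a connected set meeting $U$ and avoiding $\partial U$ lies entirely in $U$.

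The genuine gap is in your genus claim. You assert that $\overline{U'}\cap\overline U=\overline{U'}$ is a forest because it ``misses one interior point of each of the $g'$ ear segments.'' But $U'\cap B=\emptyset$ does \emph{not} force $\overline{U'}\cap B=\emptyset$: a point $v_i\in B$ may sit in $\partial U'$ as an isolated component $\{v_i\}$ of $(U')^c$ (perfectly compatible with $U'$ being special, since $\text{outdeg}_{\{v_i\}}(v_i)=2$). When this happens, both sides of $v_i$ along $e_i$ lie in $U'$, and a path in $U'$ joining them closes up a cycle in $\overline{U'}$ through $v_i$; so $\overline{U'}$ is not contained in the forest $\overline U\setminus B$ and can have genus $\geqslant 1$. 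Your intended contradiction with Lemma~\ref{L:2:3} therefore does not materialize, and the alternative sentence about producing a cycle ``avoiding all $B$-points'' fails for the same reason: the connecting path lives in $U'$, but the cycle it closes passes through $v_i\in B$.

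The paper avoids this by not bounding the genus of $\overline{U'}$ from above at all. Instead it splits on whether $U'\cap T$ is connected. If not, connectedness of $U'$ forces some entire $e_i\subseteq U'$ (any path in $U'$ bridging two components of $U'\cap T$ must traverse some $e_i$ completely, since the only exits from $e_i$ are its endpoints in $T$), whence $v_i\in U'\cap B$. If $U'\cap T$ is connected, one takes a boundary point of a component of $(U')^c$ with out-degree $\geqslant 2$; its two outgoing segments into $U'$ cannot both lie in $T$ (that would disconnect $U'\cap T$ at that point), so one of them enters some $e_i$, and following it through $e_i$ to the far endpoint --- which must land in $U'\cap T$ --- again yields $e_i\subseteq U'$ and hence $v_i\in U'\cap B$.
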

\begin{proof}
If $g'=0$, then $\overline{U}\subseteq \mathcal{L}(\partial U)$ by
Proposition~\ref{P:2:2}. Now we suppose $g'\geqslant 1$. Consider a
point $v\in U$. If $v\notin \mathcal{L}(\partial U)$, then there
exists a special open set $U'$ such that $v\in U'$ and $U'\subseteq
U$ by Theorem~\ref{T:precriterion}. We claim that $U'\bigcap B\neq
\emptyset$, which implies $v\in\mathcal{L}(\partial U, B)$.

Denote the $g'$ open segments of $U\setminus T$ by
$e_1,e_2,\cdots,e_{g'}$. If $U'\bigcap T$ is not connected, then
there must exist some $e_i\subseteq U'\setminus T$ to make $U'$
connected. Thus $U'\bigcap B\neq \emptyset$. Now suppose $U'\bigcap
T$ is connected. By definition of special open sets, every connected
component of $(U')^c$ contains a boundary point with out-degree at
least $2$, which means that there exists some $e_i\subseteq U'\setminus
T$ having one end in $\partial U'$ and the other in $U'\bigcap
T$. Thus we also have $U'\bigcap B\neq \emptyset$.
\end{proof}

\begin{rmk}
\textbf{Theorem~\ref{T:g+1}} can be deduced from Proposition~\ref{P:2:3} by the following argument. Let $\Gamma$ be a metric
graph of genus $g$ and $T$ a spanning tree of $\Gamma$. Choose an
arbitrary point $v_0$ from $T$. Then $\Gamma\setminus T$ is a
disjoint union of $g$ open segments $e_1,e_2,\cdots,e_{g}$. Choose
arbitrarily a point $v_i$ from $e_i$ for $i=1,2,\cdots,g$. Let
$A=\{v_0,v_1,\cdots,v_g\}$. If $v_0$ is not a cut point, then we can
directly apply Proposition~\ref{P:2:3} to $\Gamma\setminus v_0$ and
conclude that $\mathcal{L}(A)=\Gamma$. Otherwise, applying
Proposition~\ref{P:2:3} to each connected component $X$ of
$\Gamma\setminus v_0$ (note that the induced spanning tree of
$\overline{X}$ is $T\bigcap\overline{X}$), we also get
$\mathcal{L}(A)=\Gamma$. Therefore $A$ is a rank-determining set of
cardinality $g+1$ as desired.
\end{rmk}

\begin{rmk} \label{R:ac}
We sketch Varley's proof of \textbf{Theorem~\ref{T:ac}} here. Consider a
nonsingular projective algebraic curve $C$. First note that the rank $r(D)$ of
a divisor $D$ on $C$ has the same value as $\text{dim}L(D)-1$.
Recall that we say a point $p\in C$ is a \emph{base point} of a
linear system $|D|$ if $p$ belongs to the support of every element
of $|D|$, i.e., $p\in \text{BL}(|D|)$ where
$\text{BL}(|D|)=\bigcap_{D'\in |D|}\text{supp}D'$ which is called
the \emph{base locus} of $|D|$. Varley's argument uses the fact that
a point $p\in C$ is a base point of $|D|$ if and only if $r(D-(p))=r(D)$. (Note
that this is not true for metric graphs.) Take any set $S$ of $g +
1$ distinct points on $C$. To prove that $S$ is a rank-determining set,
it suffices to show that for a divisor $D$ on $C$, if $r(D)
\geqslant 0$, then there exists a point $p$ in $S$ such that $r(D-(p))
= r(D) - 1$. Let $B=\sum_{q\in\text{BL}(|D|)}(q)$ which is the full
base locus divisor of $|D|$. Note that $|B|=\{B\}$ since $B$ cannot ``move''. If $\text{deg}(B)\leqslant g$,
then there is a point $p$ of $S$ not contained in $\text{BL}(|D|)$,
which means $r(D-(p)) = r(D) - 1$.  If $\text{deg}(B)\geqslant g+1$,
then $r(B)\geqslant 1$ (by Riemann-Roch) which is impossible.  The
desired result follows by induction.
\end{rmk}

\begin{ex} \label{E:K4}
\begin{figure}[h]
\centering
\includegraphics[width=0.4\textwidth]{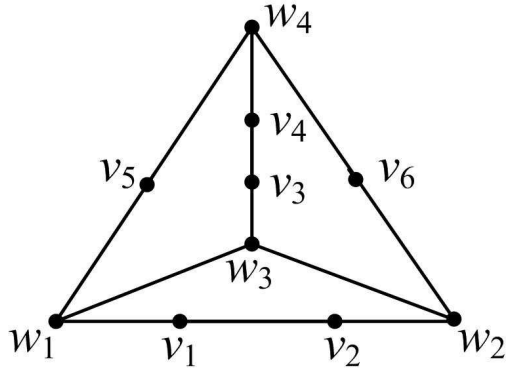}
\caption{A metric graph corresponding to $K_4$.}\label{F:K4}
\end{figure}
Let $\Gamma$ be a metric graph corresponding to $K_4$ with a vertex
set $\Omega$ being $\{w_1,w_2,w_3,w_4\}$ as shown in Figure~\ref{F:K4}. Let $v_1,v_2,\cdots,v_6$ be some internal points.
Clearly $\Omega$ itself is a rank-determining set by Theorem~\ref{T:vertex}. But a proper subset of $\Omega$ can also be a
rank-determining set. Note that
$[w_1,w_3]\bigcup[w_2,w_3]\bigcup[w_4,w_3]$ is a spanning tree of
$\Gamma$, which implies $w_3\in\mathcal{L}(w_1,w_2,w_4)$ by
Proposition~\ref{P:2:2}. Thus $\{w_1,w_2,w_4\}$ is a
rank-determining set as desired. It is also easy to see that
$\{w_3,v_1,v_5,v_6\}$ and $\{v_1,v_3,v_5,v_6\}$ are rank-determining
sets by Proposition~\ref{P:2:3}. We recommend the reader to use Theorem~\ref{T:precriterion} to verify
that $\{v_1,v_2,v_3,v_4\}$ is another rank-determining set, which is
not obvious at first sight.
\end{ex}

\begin{prop} \label{P:2:4}
Let $U$ be a special open set on $\Gamma$. Then there exists a
divisor $D$ such that $\emph{supp}|D|=U^c$.
\end{prop}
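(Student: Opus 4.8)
The plan is to construct the divisor $D$ explicitly so that $|D|$ is a single effective divisor with support exactly $U^c$; this is in a sense the "dual" of Lemma~\ref{L:2:1}. The natural candidate, guided by Lemma~\ref{L:2:1} and Lemma~\ref{L:2:2}, is $D=\sum_{v\in\partial U}\mathrm{outdeg}_U(v)\cdot(v)$, or a slight variant of it supported on $\partial U$ with each coefficient equal to the out-degree from $U$. The goal is then to show two things: first, that $D$ is $v$-reduced for every $v\in U$ (so that $U\subseteq(\mathrm{supp}|D|)^c$), and second, that $|D|=\{D\}$, i.e.\ $D$ itself is the only effective divisor in its class, so that $\mathrm{supp}|D|=\mathrm{supp}D\subseteq\partial U\subseteq U^c$, with equality because no point of $U^c$ other than those in $\mathrm{supp}D$ can be excluded once we know $\mathrm{supp}|D|=\mathrm{supp}D$.

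First I would verify that $D$ is $v$-reduced for any chosen $v\in U$. Running Dhar's algorithm (Algorithm~\ref{A:Dhar}) starting from $S_0=\mathrm{supp}D\setminus v=\partial U$: since $U$ is special, every connected component $X$ of $U^c=\mathcal{U}_{\partial U,v}^c$ contains a boundary point $w$ with $\mathrm{outdeg}_X(w)\geqslant 2$, but the relevant out-degree for the algorithm is $\mathrm{outdeg}_{\mathcal{U}_{S_0,v}^c}(w)=\mathrm{outdeg}_{U^c}(w)$, which is at most $\mathrm{outdeg}_U$... here I must be careful. The cleanest route is to choose the coefficients of $D$ at $v\in\partial U$ to be exactly $\mathrm{outdeg}_{U^c}(v)$ — the number of edges leaving $U^c$ at $v$ — rather than $\mathrm{outdeg}_U(v)$; with that choice every boundary point of $\mathcal{U}_{S_0,v}^c=U^c$ is saturated, Dhar's algorithm outputs $S_0$ unchanged, and by Lemma~\ref{L:1:2} $D$ is $v$-reduced. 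The same computation works verbatim for every $v\in U$ since $\mathcal{U}_{\partial U,v}=U$ for all such $v$. By Corollary~\ref{C:1:1}(ii), applied with this $D$ and base point any $v\in U$, we then get $U\subseteq\mathcal{U}_{\mathrm{supp}D,v}\subseteq(\mathrm{supp}|D|)^c$, so $\mathrm{supp}|D|\subseteq U^c$.

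Next I would show $\mathrm{supp}|D|\supseteq U^c$, equivalently that $|D|=\{D\}$: every point of $\mathrm{supp}D$ lies in $\mathrm{supp}|D|$ trivially, and every point $u$ of $U^c\setminus\mathrm{supp}D$ — a point interior to a component $X$ of $U^c$ or a boundary point not in $\partial U$ — must be shown to lie in the support of \emph{some} element of $|D|$; since $|D|=\{D\}$ would only give $\mathrm{supp}D$, I instead argue component by component. Fix a component $X$ of $U^c$. Since $D$ is $v$-reduced for $v\in U$ and $D\geqslant 0$ with $\mathrm{supp}D\cap U=\emptyset$, the restriction $D|_X$ "uses up" exactly the out-degree at each boundary point of $X$; a degree count via the Riemann–Roch inequality on $\overline{X}$ (which has genus $\geqslant 1$ by Lemma~\ref{L:2:3}) combined with the fact that $D$ is $v$-reduced for all $v\in U$ forces any effective $D'\sim D$ to restrict to $X$ with the same support-filling behavior, and in particular to have support meeting every segment of $X$. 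Concretely: were there a point $u\in X$ with $u\notin\mathrm{supp}|D|$, Lemma~\ref{L:2:4} would give a special open set $U'$ with $u\in U'\subseteq(\mathrm{supp}|D|)^c\subseteq U^c\cup\{$? $\}$; but $U'$ connected and disjoint from $\mathrm{supp}D\supseteq\partial U$ forces $U'\subseteq X^o$, contradicting that $\overline{U'}$ must have genus $\geqslant 1$ while $X\subseteq U^c$ could itself have genus — so this direction needs the stronger input that $D$ saturates $X$ tightly. I would instead pin down $|D|=\{D\}$ directly: any $D'\in|D|$ is $v$-reduced for all $v\in U$ (being effective, linearly equivalent to the $v$-reduced $D$, hence equal to it by Theorem~\ref{T:unique}), so $D'=D$; thus $\mathrm{supp}|D|=\mathrm{supp}D=\partial U$. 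This gives $\mathrm{supp}|D|=\partial U$, not $U^c$ — so the correct statement must be read as: we can enlarge $D$ so that its class fills $U^c$.

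**The main obstacle** is exactly this last point: a single divisor supported on $\partial U$ has $|D|=\{D\}$, whose support is only $\partial U$, not all of $U^c$. To get $\mathrm{supp}|D|=U^c$ one must choose $D$ with enough "room to move" inside each component $X$ of $U^c$ while keeping $U$ out of $\mathrm{supp}|D|$. The right construction is to take $D=\sum_{v\in\partial U}\mathrm{outdeg}_{U^c}(v)\cdot(v)+\sum_{X}D_X$ where $D_X$ is an effective divisor on $\overline{X}$ of sufficiently large degree so that, on $\overline{X}$, the base locus of $|D|_{\overline X}|$ (with boundary behavior fixed by $U$-reducedness) is empty — this is where Lemma~\ref{L:2:3} (genus of $\overline X\geqslant 1$) and the Riemann–Roch theorem for metric graphs (Theorem~\ref{T:RR-metricgraph}) enter, guaranteeing that once $\deg D_X$ is large the linear system on $\overline X$ becomes base-point-free, hence its support is all of $\overline X=X$. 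The technical heart is checking that these independent movements inside the components $X$ do not propagate across $\partial U$ into $U$ — which follows because each boundary point of $U^c$ stays saturated, so Dhar's algorithm still certifies $v$-reducedness for $v\in U$, keeping $U\cap\mathrm{supp}|D|=\emptyset$ by Corollary~\ref{C:1:1}(ii). I expect assembling these pieces — large-degree base-point-freeness on each $\overline X$, plus the non-propagation across $\partial U$ — to be the one genuinely delicate step; everything else is bookkeeping with the reduced-divisor machinery already developed.
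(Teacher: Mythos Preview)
Your proposal contains a genuine error that breaks the argument. You choose the boundary coefficients to be $\mathrm{outdeg}_{U^c}(v)$ precisely so that every point of $\partial(\mathcal{U}_{S_0,v_0}^c)=\partial U$ is \emph{saturated}, and then conclude from Lemma~\ref{L:1:2} that $D$ is $v_0$-reduced. This is backwards: Lemma~\ref{L:1:2} says $D$ is $v_0$-reduced if and only if Dhar's output is \emph{empty}; a nonempty output of saturated points means $D$ is \emph{not} reduced. Concretely, with your coefficients one can fire $U^c$ a small distance into $U$ (take a basic $v_0$-extremal function with $X_{\max}=U^c$), producing an effective divisor equivalent to $D$ whose support meets $U$. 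Hence $\mathrm{supp}|D|\cap U\neq\emptyset$, and the construction cannot yield $\mathrm{supp}|D|=U^c$. The same problem persists in your final construction, since the added $D_X$ only increases saturation.

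The paper avoids this by putting coefficient $1$ (not $\mathrm{outdeg}_{U^c}(v)$) at each boundary point. The point of $U$ being \emph{special} is exactly that $D'=\sum_{v\in\partial U}(v)$ is $U$-reduced (Lemma~\ref{L:2:1}): some boundary point of each component has out-degree $\geqslant 2>1$, so it is non-saturated and no firing into $U$ is possible. To fill the components $X_i$ of $U^c\setminus\partial U$, the paper does not invoke Riemann--Roch or large degree; instead it picks a spanning tree $T_i$ of $\overline{X}_i$, adds one point from each of the $g_i$ segments of $X_i\setminus T_i$ to form $B_i$, and applies Proposition~\ref{P:2:3} to get $\overline{X}_i\subseteq\mathcal{L}(\partial U,B)\subseteq\mathrm{supp}|D|$. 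The delicate step is then checking that $D=\sum_{v\in\partial U}(v)+\sum_{v\in B}(v)$ remains $U$-reduced after adding these interior points; the paper does this by tracking how the Dhar partition $N'_0,\dots,N'_{K-1}$ for $D'$ extends to one for $D$. (Incidentally, your appeal to Lemma~\ref{L:2:3} for the genus of $\overline{X}$ is also misplaced: that lemma concerns $\overline{U}$, not the components of $U^c$, which may well be trees.)
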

\begin{proof}
We only need to consider $U$ nontrivial. Assume $(\partial U)^c$ has
$n$ connected components $X_1,X_2,\cdots,X_n$ other than $U$. Let
$T_i$ be a spanning tree of $\overline{X}_i$, $i=1,2,\cdots,n$. Then
$X_i\setminus T_i$ is a disjoint union of $g_i$ open segments.
Choosing one point from each of these segments, we get a finite set
$B_i$ of cardinality $g_i$. Let $B=\bigcup_{i=1}^nB_i$ and
$D=\sum_{v\in\partial U}(v)+\sum_{v\in B}(v)$. Then by Proposition~\ref{P:2:3}, we have $U^c=\bigcup_{i=1}^n\overline{X}_i\subseteq
\mathcal{L}(\partial U, B)\subseteq\text{supp}|D|$. Therefore, to
prove $\text{supp}|D|=U^c$, it suffices to show that $D$ is
$U$-reduced.

Let $D'=\sum_{v\in\partial U}(v)$. Then $D'$ is $U$-reduced since
$U$ is a special open set. Thus by running Dhar's algorithm for $D'$
and a point in $U$ step by step and taking the set of non-saturated
points in each step, we can get a partition of $\partial U$ by
$N'_0,N'_1,\cdots,N'_{K-1}$. Note that for every $X_i$, there exists
some $N'_k$ such that either $\partial X_i$ is a subset of $N'_k$ or
$X_i$ connects points in $\partial X_i\bigcap N'_k$ and $\partial
X_i\bigcap N'_{k+1}$, i.e., $\partial X_i\bigcap N'_k$ and $\partial
X_i\bigcap N'_{k+1}$ are nonempty and $\partial X_i\subseteq
N'_k\bigcup N'_{k+1}$. Therefore we may define a function
$\lambda:\{1,2,\cdots,n\}\rightarrow\{1,2,\cdots,K-1\}$ by
$\lambda(i)=k$ if $\partial X_i\bigcap N'_k\neq\emptyset$ and
$\partial X_i\bigcap N'_{k-1}=\emptyset$. Let
$N_k=(\bigcup_{\lambda(i)=k}B_i)\bigcup N'_k$ for
$k=0,1,\cdots,K-1$. Obviously these $N_k$'s form a partition of
$\partial U\bigcup B$. Running Dhar's algorithm for $D$ and a point
in $U$ step by step, we observe that the set of non-saturated points
in each step is precisely $N_0,N_1,\cdots,N_{K-1}$ in sequence.
Therefore the output is empty, which means $D$ is $U$-reduced.
\end{proof}

Now we come to the main conclusion of this subsection, which states
that the condition in Theorem~\ref{T:precriterion} is both
necessary and sufficient.

\begin{thm}[\textbf{Criterion for $\mathcal{L}(A)$}] \label{T:criterion}
Let $v\in \Gamma$ and let $A$ be a nonempty subset of $\Gamma$. Then
$v\in\mathcal{L}(A)$ if and only if for all special open sets $U$ containing
$v$, we have $A\bigcap U\neq \emptyset$. Furthermore,
\begin{equation*}
\mathcal{L}(A)=\bigcap_{U\in\mathcal{S}_\Gamma,U\bigcap A=\emptyset}
U^c.
\end{equation*}
In addition, $A$ is a rank-determining set if and only if all nonempty special
open sets intersect $A$.
\end{thm}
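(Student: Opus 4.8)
The plan is to observe that one direction is already in hand: Theorem~\ref{T:precriterion} gives both the implication ``$A\cap U\neq\emptyset$ for every special open set $U$ containing $v$ $\Rightarrow$ $v\in\mathcal{L}(A)$'' and the inclusion $\mathcal{L}(A)\supseteq\bigcap_{U\in\mathcal{S}_\Gamma,\,U\cap A=\emptyset}U^c$. So the entire remaining task is the reverse implication, which I would phrase contrapositively: if there is a special open set $U$ with $v\in U$ and $A\cap U=\emptyset$, then $v\notin\mathcal{L}(A)$.

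For this the key input is Proposition~\ref{P:2:4}. Given such a $U$, it produces a divisor $D$ with $\text{supp}|D|=U^c$. Then $A\cap U=\emptyset$ gives $A\subseteq U^c=\text{supp}|D|$, while $v\in U$ gives $v\notin U^c=\text{supp}|D|$. By the defining property of $\mathcal{L}(A)$ (the set of points forced to lie in $\text{supp}|D|$ whenever $A\subseteq\text{supp}|D|$), this single linear system witnesses $v\notin\mathcal{L}(A)$. This establishes the biconditional, and intersecting over all admissible $U$ upgrades the inclusion from Theorem~\ref{T:precriterion} to the equality $\mathcal{L}(A)=\bigcap_{U\in\mathcal{S}_\Gamma,\,U\cap A=\emptyset}U^c$: the ``$\subseteq$'' part is precisely the contrapositive just proved, since if $v\notin\bigcap U^c$ then $v\notin U^c$, i.e. $v\in U$, for some admissible $U$.

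The last assertion I would derive by combining this equality with Proposition~\ref{P:2:1}, which says $A$ is rank-determining if and only if $\mathcal{L}(A)=\Gamma$. From the formula, $\mathcal{L}(A)=\Gamma$ holds if and only if every special open set $U$ disjoint from $A$ satisfies $U^c=\Gamma$, i.e. $U=\emptyset$; equivalently, every nonempty special open set meets $A$.

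Since Proposition~\ref{P:2:4} is already proved, there is no substantive obstacle remaining; the only care needed is in matching the quantifiers in the definition of $\mathcal{L}(A)$ against the special-open-set description, and in checking that the trivial special open sets ($\emptyset$ and $\Gamma$) cause no trouble --- for instance, when $A$ already meets every nonempty special open set the only admissible $U$ is $\emptyset$, so the intersection is all of $\Gamma$, exactly as it should be.
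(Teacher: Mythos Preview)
Your proposal is correct and follows essentially the same approach as the paper: one direction is inherited from Theorem~\ref{T:precriterion}, and for the converse you invoke Proposition~\ref{P:2:4} to produce a divisor $D$ with $\text{supp}|D|=U^c$, witnessing $v\notin\mathcal{L}(A)$. The paper phrases this step as a proof by contradiction rather than contrapositive, and leaves the deduction of the equality formula and the rank-determining criterion implicit, but the substance is identical.
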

\begin{proof}
We just need to prove that if $v\in\mathcal{L}(A)$, then all critical
open sets containing $v$ must intersect $A$.

Suppose for the sake of contradiction that there exists $U\in\mathcal{S}_\Gamma$ such
that $v\in U$ and $A\bigcap U= \emptyset$. Then by
Proposition~\ref{P:2:4}, there exists a divisor $D$ such that
$\text{supp}|D|=U^c$. Thus we have $A\subseteq \text{supp}|D|$,
which means that $\mathcal{L}(A)\subseteq\text{supp}|D|$. But then
$v\notin\mathcal{L}(A)$.
\end{proof}

\begin{ex} \label{E:L}
\begin{figure}[h]
\centering
\includegraphics[width=0.75\textwidth]{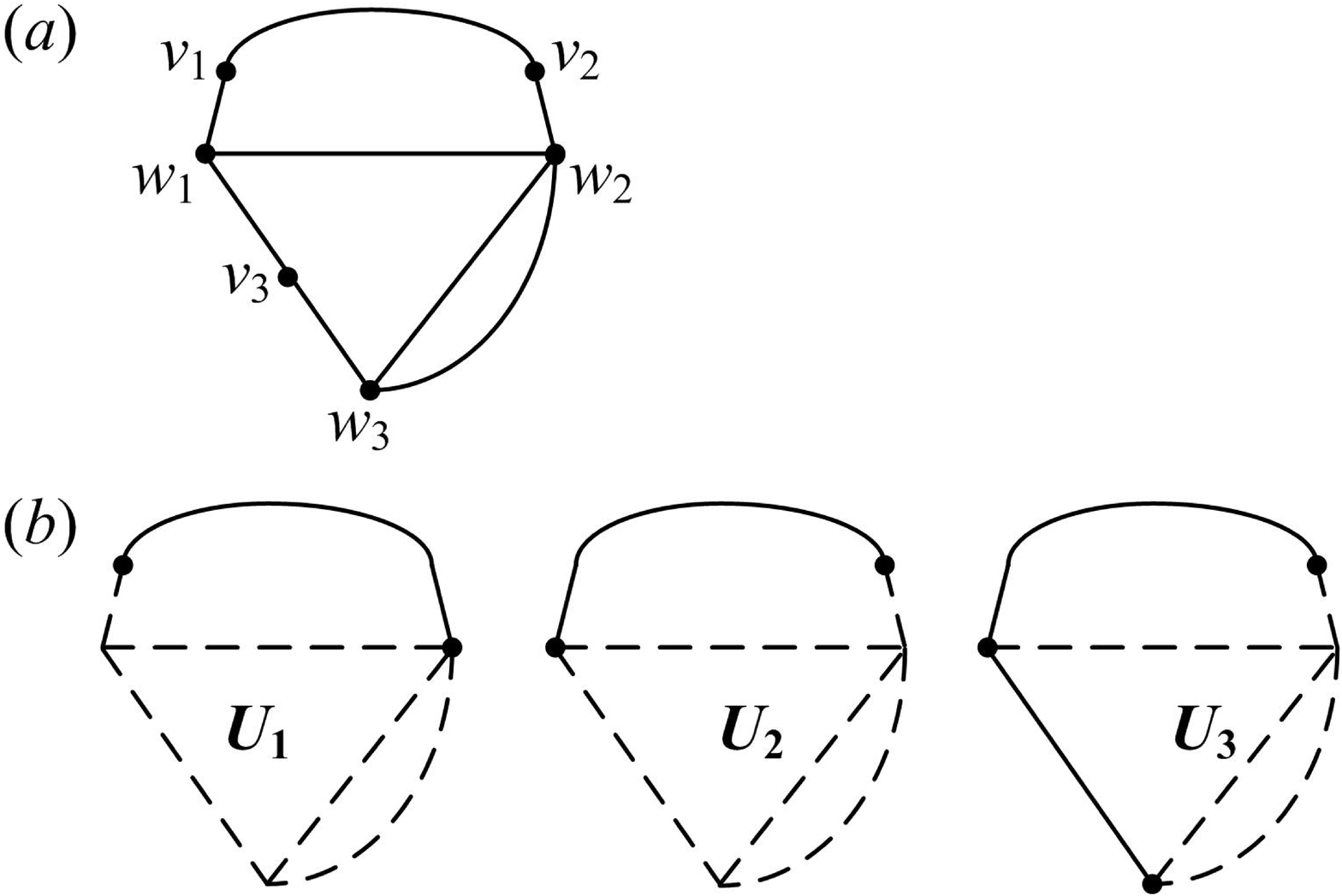}
\caption{(a) A metric graph with a vertex set $\{w_1,w_2,w_3\}$. (b)
Three examples of special open sets disjoint from
$\{v_1,v_2\}$.}\label{F:L}
\end{figure}
Let $\Gamma$ be a metric graph with a vertex set $\{w_1,w_2,w_3\}$
as shown in Figure~\ref{F:L}(a), and let $v_1,v_2,v_3$ be some internal
points. Clearly $[v_1,v_2]\subseteq\mathcal{L}(v_1,v_2)$. The dashed
areas of Figure~\ref{F:L}(b), $U_1$, $U_2$ and $U_3$, are three
examples of special open sets disjoint from $\{v_1,v_2\}$. Hence we
have $\mathcal{L}(v_1,v_2)=[v_1,v_2]$ by Theorem~\ref{T:criterion}.
Now let us consider $\mathcal{L}(v_1,v_2,v_3)$. We observe that any
special open set disjoint from $\{v_1,v_2,v_3\}$ must be a subset
of $U_3$, which implies $\mathcal{L}(v_1,v_2,v_3)=U_3^c$.
\end{ex}

\subsection{Consequences of the criterion}
\begin{cor} \label{C:2:1}
Let $A$ be a nonempty subset of $\Gamma$. If $A^c$ has $n$ connected
components $X_1,X_2,\cdots,X_n$, then $A$ is a rank-determining set
if and only if $X_i\subseteq \mathcal{L}(\partial X_i)$, for $i=1,2,\cdots,n$.
\end{cor}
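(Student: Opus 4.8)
The plan is to read the corollary straight off the criterion of Theorem~\ref{T:criterion}, which simultaneously describes rank-determining sets as exactly the subsets meeting every nonempty special open set, and describes membership $v\in\mathcal{L}(B)$ by the condition that every special open set containing $v$ meets $B$. First I would record the harmless remark that each $\partial X_i$ is nonempty: since $\Gamma$ is connected and $\emptyset\neq X_i\neq\Gamma$ (the inequality $X_i\neq\Gamma$ because $A\neq\emptyset$), the boundary $\partial X_i$ cannot be empty, so $\mathcal{L}(\partial X_i)$ is defined. I will also use the elementary topological fact, valid in any space, that a connected set $U$ meeting both a subset $Y$ and its complement must meet $\partial Y$; this follows by partitioning $\Gamma$ into $Y^o$, $\partial Y$, and $\Gamma\setminus\overline{Y}$ and invoking the connectedness of $U$.

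For the forward implication, I would assume $A$ is rank-determining, fix $i$ and a point $v\in X_i$, and aim to show $v\in\mathcal{L}(\partial X_i)$; by Theorem~\ref{T:criterion} it suffices to show that an arbitrary special open set $U$ containing $v$ meets $\partial X_i$. Since $A$ is rank-determining and $U$ is a nonempty special open set, Theorem~\ref{T:criterion} gives $U\cap A\neq\emptyset$. But $X_i\subseteq A^c$, so $U$ meets $X_i$ (it contains $v$) and also meets $X_i^c$; the topological fact above then yields $U\cap\partial X_i\neq\emptyset$. Hence $v\in\mathcal{L}(\partial X_i)$, and as $v\in X_i$ was arbitrary, $X_i\subseteq\mathcal{L}(\partial X_i)$ for every $i$.

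For the reverse implication, I would assume $X_i\subseteq\mathcal{L}(\partial X_i)$ for all $i$, and, using Theorem~\ref{T:criterion} again, check that every nonempty special open set $U$ intersects $A$. Suppose not; then $U\subseteq A^c$, and since $U$ is connected and nonempty it lies inside a single component of $A^c$, say $U\subseteq X_i$. Being open and contained in $X_i$, the set $U$ lies in the interior $X_i^o$, so $U\cap\partial X_i=\emptyset$. Choosing any $v\in U$, we have $v\in X_i\subseteq\mathcal{L}(\partial X_i)$, yet $U$ is a special open set containing $v$ and disjoint from $\partial X_i$, contradicting Theorem~\ref{T:criterion}. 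So every nonempty special open set meets $A$, and $A$ is rank-determining.

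I do not expect a real obstacle: the corollary is essentially Theorem~\ref{T:criterion} applied one component of $A^c$ at a time, and the only points needing care are the two routine topological observations (an open subset of $X_i$ avoids $\partial X_i$; a connected set straddling $X_i$ hits $\partial X_i$) together with the bookkeeping that $A$ and the $X_i$ exhaust $\Gamma$. For the reverse implication one could alternatively argue via monotonicity of $\mathcal{L}$ (immediate from the intersection formula), using $\partial X_i\subseteq\overline{A}$ and the closedness of every $\text{supp}|D|$ (Lemma~\ref{L:2:5}) to get $\mathcal{L}(\partial X_i)\subseteq\mathcal{L}(\overline{A})=\mathcal{L}(A)$; but the argument above is more direct and self-contained.
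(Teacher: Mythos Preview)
Your proof is correct and follows essentially the same approach as the paper's: both read the corollary directly off Theorem~\ref{T:criterion}, using the two elementary topological facts you isolate (a connected set meeting $X_i$ and $X_i^c$ meets $\partial X_i$; an open subset of $X_i$ lies in $X_i^o$ and so avoids $\partial X_i$). The paper compresses the argument into a single chain of ``if and only if'' statements prefaced by the first of these observations, while you spell out both directions separately and make the topological bookkeeping explicit; but the content is the same.
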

\begin{proof}
For a point $v\in X_i$, if a special open set $U$ containing $v$
intersects $A$, then  $U$ must intersect $\partial X_i$. Thus by
Theorem~\ref{T:criterion}, $A$ is a rank-determining set, if and only if all
nonempty special open sets intersect $A$, if and only if for all $v\in\Gamma$,
if $v\in X_i$, then all special open sets $U$ containing $v$
intersect $\partial X_i$, if and only if $X_i\subseteq \mathcal{L}(\partial
X_i)$, for $i=1,2,\cdots,n$.
\end{proof}

\begin{cor} \label{C:2:2}
Let $\Gamma$ be a metric graph with a cut point $v$. Let $\Gamma'$
the closure of a connected component of
$\Gamma\setminus v$. Then for a nonempty subset $A$ of $\Gamma'$, we
have $\mathcal{L}_{\Gamma'}(A)\subseteq\mathcal{L}_\Gamma(A)$.
\end{cor}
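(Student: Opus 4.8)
The plan is to derive the inclusion directly from the criterion of Theorem~\ref{T:criterion}, applied to both $\Gamma$ and $\Gamma'$ (note that $\Gamma'$ is itself a metric graph with its induced vertex set, so the notion of special open set makes sense on it). Since $\mathcal{L}_{\Gamma'}(A)\subseteq\Gamma'$ by definition, fix $w\in\mathcal{L}_{\Gamma'}(A)$; then $w\in\Gamma'$, and by Theorem~\ref{T:criterion} on $\Gamma$ it suffices to show $U\cap A\neq\emptyset$ for every special open set $U$ of $\Gamma$ containing $w$ (the case $U=\Gamma$ being trivial). Fix such a nontrivial $U$. The heart of the proof is the claim that there is a special open set $U'$ of $\Gamma'$ with $w\in U'\subseteq U$; granting this, Theorem~\ref{T:criterion} on $\Gamma'$ forces $U'\cap A\neq\emptyset$, hence $U\cap A\neq\emptyset$, as required.

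To construct $U'$, let $v$ be the cut point, so $\Gamma'\cap\overline{\Gamma\setminus\Gamma'}=\{v\}$ and any connected subset of $\Gamma$ meeting both $\Gamma'\setminus v$ and its complement must contain $v$. If $v\notin U$, then $U$ is connected, disjoint from $v$, and meets the component $\Gamma'\setminus v$ of $\Gamma\setminus v$ (since $w\in U\cap(\Gamma'\setminus v)$), so $U\subseteq\Gamma'\setminus v\subseteq\Gamma'$; set $U'=U$. If $v\in U$, set $U'=U\cap\Gamma'$, which is open in $\Gamma'$ and contains $w$ and $v$. To see $U'$ is connected, observe that every connected component $C$ of $U\cap(\Gamma'\setminus v)$ satisfies $v\in\overline{C}$: otherwise $C$ would be both open in $\Gamma$ and, being a component, relatively closed in $U$ away from $v$, hence clopen in $U$, contradicting the connectedness of $U$. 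Thus $U'=\{v\}\cup\bigcup_C C$ is a union of connected sets through $v$.

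Next one checks that $U'$ is special in $\Gamma'$. In either case the connected components of $\Gamma'\setminus U'$ are exactly the sets $X\cap\Gamma'$ with $X$ a connected component of $\Gamma\setminus U$ meeting $\Gamma'$. Fix such an $X$; since $U$ is special in $\Gamma$, choose $p\in\partial X$ with out-degree $\geqslant2$ from $X$ (computed in $\Gamma$), i.e. an open end of at least two internally disjoint segments of $U$. Because $U\subseteq\Gamma'$ when $v\notin U$, while when $v\in U$ the component $X$ is bounded away from $v$ so that $p\in\Gamma'\setminus v$, those segments all lie in $\Gamma'$; in particular $p\in\Gamma'$, and near $p$ the graphs $\Gamma$ and $\Gamma'$ agree locally, so the out-degree of $X\cap\Gamma'$ at $p$ computed in $\Gamma'$ is still $\geqslant2$. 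Hence every component of $\Gamma'\setminus U'$ has a boundary point in $\Gamma'$ of out-degree $\geqslant2$, so $U'\in\mathcal{S}_{\Gamma'}$.

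The step I expect to require the most care is this last verification that $U'$ is special: one must ensure that cutting $U$ down to $\Gamma'$ neither disconnects it (handled by the observation $v\in\overline{C}$) nor destroys the out-degree witness of a complementary component, which reduces to checking that such a witness $p$ always lands in $\Gamma'$ because $U$ cannot escape past the cut point $v$ into $\Gamma\setminus\Gamma'$. The remaining passages are immediate applications of Theorem~\ref{T:criterion}.
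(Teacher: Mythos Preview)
Your proof is correct and follows essentially the same approach as the paper's: both reduce, via Theorem~\ref{T:criterion}, to the claim that $U\cap\Gamma'\in\mathcal{S}_{\Gamma'}$ whenever $U\in\mathcal{S}_\Gamma$ contains a point of $\Gamma'$. The paper argues by contrapositive and simply asserts this claim in one line, whereas you argue directly and supply the verifications (connectedness of $U\cap\Gamma'$ and preservation of the out-degree witness) that the paper leaves implicit.
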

\begin{proof}
For $v'\in\Gamma'$, if $v'\notin\mathcal{L}_{\Gamma}(A)$, then there
exists $U\in\mathcal{S}_\Gamma$ such that $v'\in U$ and $U\bigcap
A=\emptyset$ by Theorem~\ref{T:criterion}. Then
$U\bigcap\Gamma'\in\mathcal{S}_\Gamma'$, which means
$v'\notin\mathcal{L}_{\Gamma'}(A)$.
\end{proof}

\begin{prop} \label{P:2:5}
Let $\Gamma$ be a metric graph with a vertex set $\Omega$ and $A$ a
finite rank-determining set of $\Gamma$. Suppose there exists a
point $v$ in $A$ which has degree $m\geqslant 2$ and is not a cut
point of $\Gamma$. Let $U_v$ be an open neighborhood of $v$ such
that $(U_v\setminus v)\bigcap(\Omega\bigcup A)=\emptyset$. Denote $\Gamma-U_v$ by
$\Gamma'$. Then $\Gamma'$ is a subgraph of $\Gamma$ and
$A\setminus v$ is a rank-determining set of $\Gamma'$.
\end{prop}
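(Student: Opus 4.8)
The plan is to use the criterion from Theorem~\ref{T:criterion}: a subset $A'$ of a metric graph $\Gamma'$ is a rank-determining set of $\Gamma'$ if and only if every nonempty special open set of $\Gamma'$ intersects $A'$. So I would take an arbitrary nonempty special open set $U'$ of $\Gamma'$ and show it must meet $A\setminus v$. First I need to record the easy structural facts: since $v$ is not a cut point and has degree $m\geqslant 2$, the neighborhood $U_v$ can be chosen so that $\Gamma'=\Gamma-U_v$ is still connected and closed, hence a subgraph of $\Gamma$, and $\partial\Gamma'$ consists of the (at most $m$) points where the edges incident to $v$ meet $\Gamma'$; these are internal points of edges of $\Gamma$ and do not lie in $A$ (by the hypothesis on $U_v$). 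The genus of $\Gamma'$ equals the genus of $\Gamma$ because deleting a small open neighborhood of a non-cut vertex does not destroy any independent cycle.

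The main step is to convert a special open set $U'$ of $\Gamma'$ into a special open set $U$ of $\Gamma$. The natural candidate is $U=U'\cup U_v$ when $U'$ touches $\partial\Gamma'$ (i.e. when $\overline{U'}$ contains one of the attaching points of $U_v$), and $U=U'$ otherwise. I would check that in either case $U$ is a connected open subset of $\Gamma$ and that every connected component $X$ of $U^c$ (taken in $\Gamma$) still has a boundary point of out-degree $\geqslant 2$: the components of $U^c$ in $\Gamma$ are exactly the components of $(U')^c$ in $\Gamma'$ with the points of $\partial\Gamma'$ absorbed into $U$, and absorbing a degree-$\leqslant m$ internal attaching point into $U$ cannot decrease any out-degree that certified $X$ as "special" — one has to argue that the witnessing boundary point $v_X\in\partial X$ with $\mathrm{outdeg}_X(v_X)\geqslant 2$ in $\Gamma'$ still satisfies $\mathrm{outdeg}_X(v_X)\geqslant 2$ in $\Gamma$, which is clear since edges leaving $X$ in $\Gamma'$ are still edges leaving $X$ in $\Gamma$. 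Thus $U\in\mathcal{S}_\Gamma$.

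Now, since $A$ is a rank-determining set of $\Gamma$, Theorem~\ref{T:criterion} gives $A\cap U\neq\emptyset$. Because $A\subseteq\Gamma'$ and $A\cap U_v=\{v\}$ at most (indeed $A\cap(U_v\setminus v)=\emptyset$, and whether $v\in U_v$ can be arranged), the only way $A$ could meet $U$ outside of $\Gamma'$ is through the single point $v$; but $v\in\partial\Gamma'$... — more carefully: we must ensure the point of $A\cap U$ lies in $A\setminus v$ and inside $U'$. Since $A\subseteq\Gamma'$ and $U\setminus U'\subseteq U_v$, and $A\cap U_v\subseteq\{v\}$, if the intersection point were $v$ we would need $v\in U_v$; choosing $U_v$ so that $v\notin\partial\Gamma'$ automatically, the point $v$ itself is in $\Gamma'$ but $v\in U\setminus\Gamma'$ is false — here one simply notes $v$ has been removed from $\Gamma'$, so $v\notin A\cap\Gamma'=A$ is false... this is the delicate bookkeeping point. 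The clean fix: the hypothesis "$v\in A$" together with "$A$ a set in $\Gamma$" and "$\Gamma'=\Gamma-U_v$ with $v\in U_v^o$" forces us to interpret the conclusion's "$A\setminus v$" as the relevant subset of $\Gamma'$; then $A\cap U\neq\emptyset$ with $A\cap(U\setminus U')\subseteq\{v\}$ and the understanding that $v$ contributes nothing to a special open set of $\Gamma'$ yields $(A\setminus v)\cap U'\neq\emptyset$. Applying Theorem~\ref{T:criterion} in the reverse direction to $\Gamma'$ shows $A\setminus v$ is rank-determining on $\Gamma'$.

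The step I expect to be the main obstacle is verifying that gluing $U_v$ onto $U'$ genuinely produces a \emph{special} open set of $\Gamma$ in all configurations — in particular when several of the attaching points of $U_v$ lie in the boundaries of \emph{different} components of $(U')^c$, so that adding $U_v$ merges several components of the complement into one. One must check that the merged component still carries a boundary point of out-degree $\geqslant 2$; this should follow because each of the merged pieces already had such a point and merging only adds boundary, never removes it, but the case analysis (and the possibility $m>2$, or that $U'$ was already all of $\Gamma'$, or empty) needs to be handled carefully using that $v$ is not a cut point so $\Gamma'$ stays connected.
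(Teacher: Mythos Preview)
Your overall strategy---use Theorem~\ref{T:criterion} and promote a special open set $U'\in\mathcal{S}_{\Gamma'}$ to one in $\mathcal{S}_\Gamma$---is exactly the paper's approach, but your construction has a genuine gap that you yourself flag as ``delicate bookkeeping'' without resolving.

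The problem is your choice $U=U'\cup U_v$. Since $v\in U_v$, you get $v\in U$, and since $v\in A$, the conclusion $A\cap U\neq\emptyset$ from Theorem~\ref{T:criterion} is \emph{automatic} and carries no information whatsoever about $A\setminus v$. Your ``clean fix'' asserts that $A\cap(U\setminus U')\subseteq\{v\}$ together with $A\cap U\neq\emptyset$ yields $(A\setminus v)\cap U'\neq\emptyset$, but this is simply false: $A\cap U=\{v\}$ is perfectly consistent with $(A\setminus v)\cap U'=\emptyset$.

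The paper fixes this by being more surgical. Write $U_v\setminus v$ as the disjoint union of $m$ open segments $e_1,\dots,e_m$, and let $m'$ be the number of these whose $\Gamma'$-end lies in $U'$. Then set
\[
U=U'\cup\bigcup_{i=1}^{m'}e_i,
\]
attaching only those open segments that actually meet $U'$, and \emph{never} attaching $v$ itself. Now $U$ is open, connected, and disjoint from all of $A$ (including $v$). For the special-open-set check: if $m'<m$, the remaining piece $\{v\}\cup\bigcup_{i>m'}e_i$ glues some components of $\Gamma'\setminus U'$ together, but every boundary witness of out-degree~$\geqslant 2$ in $\Gamma'$ survives in $\Gamma$; if $m'=m$, then $\{v\}$ is itself a component of $\Gamma\setminus U$ with $\mathrm{outdeg}_{\{v\}}(v)=m\geqslant 2$. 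Either way $U\in\mathcal{S}_\Gamma$ and $A\cap U=\emptyset$, contradicting that $A$ is rank-determining.

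A secondary issue: your case split hinges on whether $\overline{U'}$ meets $\partial\Gamma'$, but in the borderline situation where an attaching point $p$ lies in $\partial U'$ (so $p\in\overline{U'}\setminus U'$), neither $U'$ nor $U_v$ contains $p$, and $U'\cup U_v$ need not even be connected. The paper's construction sidesteps this by keying on whether the end of $e_i$ lies in $U'$ itself.
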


\begin{proof}
$\Gamma'$ is connected since $v$ is not a cut point of $\Gamma$ and $U_v\setminus v$ contains no vertices. Thus $\Gamma'$ is a subgraph of $\Gamma$.

Clearly $U_v\setminus v$ is a disjoint union of $m$ open segments.
Denote these open segments by $e_1,e_2,\cdots,e_m$. Note that the
total number of $e_i$'s ends other than $v$ may be strictly less
than $m$ because of the existence of multiple edges.

Suppose $A\setminus v$ is not a rank-determining set of $\Gamma'$.
Then there exists $U'\in\mathcal{S}_{\Gamma'}$ disjoint from $A$ by
Theorem~\ref{T:criterion}. Without loss of generality, we assume
that $m'$ is an integer such that $e_i$ has an end in $U'$ for
$1\leqslant i\leqslant m'$ and $e_i$ has no end in $U'$ for
$m'<i\leqslant m$. Let $U=U'\bigcup(\bigcup_{i=1}^{m'}e_i)$.
Obviously $U$ is a connected open set on $\Gamma$ disjoint from $A$.
We claim $U\in\mathcal{S}_{\Gamma}$. This is because if $m'<m$, then
$(\bigcup_{i=m'+1}^{m}e_i)\bigcup v$ may glue together some of the
connected components of $\Gamma'-U'$ into one connected component of
$\Gamma-U$ while the out-degrees of those boundary points are
unchanged, and if $m'=m$, then $v$ itself forms a connected
component of $\Gamma-U$ and has out-degree at least $2$. But this
means $A$ is not a rank-determining set of $\Gamma$ by Theorem~\ref{T:criterion}, a contradiction.
\end{proof}

\begin{rmk}
The converse proposition of Proposition~\ref{P:2:5} is not true.
That is, $A$ is not guaranteed to be a rank-determining set of
$\Gamma$ by $A\setminus v$ being a rank-determining set of
$\Gamma'$. For example, let $\Gamma$ be the metric graph
corresponding to $K_4$ as shown in Figure~\ref{F:K4}. Let
$\Gamma'=[w_1,w_2]\bigcup[w_2,w_4]\bigcup [w_4,w_1]$. Then
$\{v_5,v_6\}$ is a rank-determining set of $\Gamma'$. However
$\{v_5,v_6,w_3\}$ is not a rank-determining set of $\Gamma$.
\end{rmk}

It is clear that special open sets are preserved under
homeomorphisms since out-degrees are topological invariants. Thus
Theorem~\ref{T:criterion} tells us that rank-determining sets are
also preserved under homeomorphisms (\textbf{Theorem~\ref{T:homeo}}). The
following theorem provides a more general description of this fact.

\begin{thm} \label{T:topo}
Let $f:\Gamma\rightarrow\Gamma'$ be a homeomorphism between two
metric graphs $\Gamma$ and $\Gamma'$. Let $A$ be a nonempty subset
of $\Gamma$. Then
$\mathcal{L}_{\Gamma'}(f(A))=f(\mathcal{L}_\Gamma(A))$. In
particular, $A$ is a rank-determining set of $\Gamma$ if and only if $f(A)$ is
a rank-determining set of $\Gamma'$.
\end{thm}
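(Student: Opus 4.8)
The statement to prove is Theorem~\ref{T:topo}: for a homeomorphism $f:\Gamma\rightarrow\Gamma'$ and a nonempty $A\subseteq\Gamma$, we have $\mathcal{L}_{\Gamma'}(f(A))=f(\mathcal{L}_\Gamma(A))$, and consequently $A$ is rank-determining in $\Gamma$ iff $f(A)$ is rank-determining in $\Gamma'$. The plan is to deduce everything from the topological characterization of $\mathcal{L}$ in Theorem~\ref{T:criterion}, namely
\begin{equation*}
\mathcal{L}_\Gamma(A)=\bigcap_{U\in\mathcal{S}_\Gamma,\,U\cap A=\emptyset}U^c,
\end{equation*}
so the whole argument reduces to showing that $f$ carries $\mathcal{S}_\Gamma$ bijectively onto $\mathcal{S}_{\Gamma'}$.

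First I would verify that $f$ sends special open sets to special open sets. A homeomorphism takes connected open sets to connected open sets, complements to complements, connected components of the complement to connected components of the image's complement, and boundary points to boundary points. The only nonformal ingredient is the invariance of out-degrees: by the remark following Example~\ref{E:Dhar}, $\operatorname{outdeg}_X(v)$ is a topological invariant (it is defined as the maximum number of internally disjoint segments of $X^c$ with an open end at $v$, which is intrinsic to the topological pair $(\Gamma,X)$ near $v$). Hence if every component $X$ of $U^c$ has a boundary point $v$ with $\operatorname{outdeg}_X(v)\geqslant 2$, then the same holds for $f(U)$; the trivial cases $U=\emptyset,\Gamma$ are obvious. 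Applying the same reasoning to $f^{-1}$ gives $f(\mathcal{S}_\Gamma)=\mathcal{S}_{\Gamma'}$, and moreover $U\cap A=\emptyset$ iff $f(U)\cap f(A)=\emptyset$ since $f$ is a bijection.

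Next I would combine these facts. Using the criterion for both graphs,
\begin{equation*}
f(\mathcal{L}_\Gamma(A))=f\Big(\bigcap_{U\in\mathcal{S}_\Gamma,\,U\cap A=\emptyset}U^c\Big)=\bigcap_{U\in\mathcal{S}_\Gamma,\,U\cap A=\emptyset}f(U^c)=\bigcap_{U\in\mathcal{S}_\Gamma,\,U\cap A=\emptyset}f(U)^c=\bigcap_{W\in\mathcal{S}_{\Gamma'},\,W\cap f(A)=\emptyset}W^c=\mathcal{L}_{\Gamma'}(f(A)),
\end{equation*}
where the second equality uses that $f$ is a bijection (so it commutes with arbitrary intersections) and the fourth re-indexes the intersection along the bijection $U\mapsto W=f(U)$ between $\{U\in\mathcal{S}_\Gamma:U\cap A=\emptyset\}$ and $\{W\in\mathcal{S}_{\Gamma'}:W\cap f(A)=\emptyset\}$. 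Finally, for the rank-determining statement, note $A$ is rank-determining in $\Gamma$ iff $\mathcal{L}_\Gamma(A)=\Gamma$ (Proposition~\ref{P:2:1}), iff $f(\mathcal{L}_\Gamma(A))=\Gamma'$, iff $\mathcal{L}_{\Gamma'}(f(A))=\Gamma'$, iff $f(A)$ is rank-determining in $\Gamma'$.

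The argument is essentially bookkeeping once Theorem~\ref{T:criterion} is in hand; the only point requiring a genuine observation is the topological invariance of out-degree, and that has already been recorded in the paper. So I expect no real obstacle — the main thing to be careful about is the re-indexing of the intersection, i.e.\ confirming that $U\mapsto f(U)$ really is a bijection between the relevant sub-collections of special open sets, which follows from $f$ being a bijection together with $f(\mathcal{S}_\Gamma)=\mathcal{S}_{\Gamma'}$ and $U\cap A=\emptyset\iff f(U)\cap f(A)=\emptyset$.
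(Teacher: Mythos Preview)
Your proposal is correct and is exactly the argument the paper intends: the paper does not give a separate proof of Theorem~\ref{T:topo} but simply notes (just before the statement) that special open sets are preserved under homeomorphisms since out-degrees are topological invariants, and that Theorem~\ref{T:criterion} then yields the result. Your write-up spells out precisely these steps, so there is nothing to add.
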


For a closed segment $e$ on a metric graph $\Gamma$, we say
$\phi_e:\Gamma\rightarrow\Gamma'$ is an \emph{edge contraction} of
$\Gamma$ with respect to $e$ if $\phi_e$ merges together all the
points in $e$ into a single point while mapping every point in
$\Gamma\setminus e$ to itself. Clearly an edge contraction $\phi_e$
may change the topology of $\Gamma$. We now give some some
examples which show that rank-determining sets may not be preserved
under edge contractions.

\begin{ex}
\begin{figure}[h]
\centering
\includegraphics[width=0.65\textwidth]{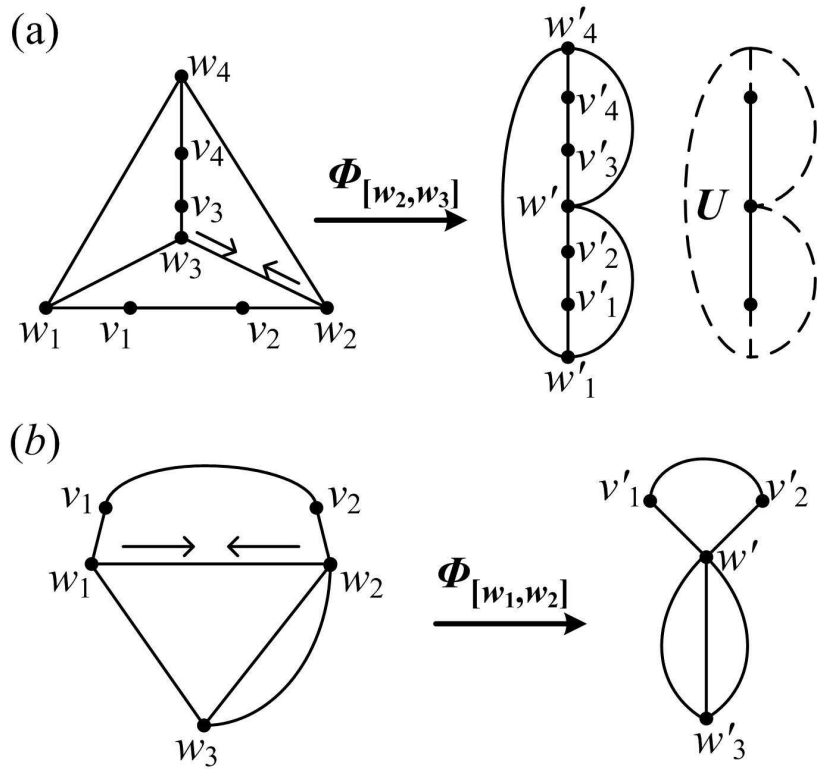}
\caption{Two examples illustrating that edge contractions do not maintain
rank-determining sets.}\label{F:Contra}
\end{figure}
(a) Consider a metric graph $\Gamma$ corresponding to $K_4$ as in
Example~\ref{E:K4}. An edge contraction with respect to $[w_2,w_3]$
results in a new graph $\Gamma'$ (Figure~\ref{F:Contra}(a)). Let
$v_1'$, $v_2'$, $v_3'$, $v_4'$, $w_1'$, $w_4'$ and $w'$ be the
points in $\Gamma'$ corresponding to $v_1$, $v_2$, $v_3$, $v_4$,
$w_1$, $w_4$ and $[w_2,w_3]$, respectively. We know that
$\{v_1,v_2,v_3,v_4\}$ is a rank-determining set of $\Gamma$.
However, as shown in Figure~\ref{F:Contra}(a), $U$ is a critical
open set disjoint from $\{v_1',v_2',v_3',v_4'\}$. Thus
$\{v_1',v_2',v_3',v_4'\}$ is not a rank-determining set of
$\Gamma'$.

(b) Now let $\Gamma$ be the metric graph as in Example~\ref{E:L}. By
contracting $[w_1,w_2]$, we get a new graph $\Gamma'$ (Figure~\ref{F:Contra}(b)). Let $v_1'$, $v_2'$, $w_3'$ and $w'$ be the
points in $\Gamma'$ corresponding to $v_1$, $v_2$, $w_3$ and
$[w_1,w_2]$, respectively. Note that
$w'\in\mathcal{L}_{\Gamma'}(v_1',v_2')$ by Corollary~\ref{C:2:2}.
Thus $\{v_1',v_2',w_3'\}$ is a rank-determining
set of $\Gamma'$. However, $\{v_1,v_2,w_3\}$ is not a
rank-determining set of $\Gamma$.
\end{ex}

\subsection{Minimal rank-determining sets}
\begin{dfn}
We say that a rank-determining set $A$ of $\Gamma$ is \emph{minimal} if
$A\setminus v$ is not a rank-determining set for every $v\in A$.
\end{dfn}

It is easy to see from Proposition~\ref{P:2:2} that minimal
rank-determining sets must be finite. In particular, the
intersection of a minimal rank-determining set and an edge contains
at most $2$ points. We have the following criterion for minimal
rank-determining sets as an immediate corollary of Theorem~\ref{T:criterion}.

\begin{prop} \label{P:mini}
Let $A$ be a subset of a metric graph $\Gamma$. Then $A$ is a
minimal rank-determining set if and only if
\begin{compactenum}[(i)]
\item all nonempty critical
open sets intersect $A$, and
\item for every point $v\in A$,
there exists a special open set that intersects $A$ only at $v$.
\end{compactenum}
\end{prop}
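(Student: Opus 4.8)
The plan is to derive both implications directly from the criterion of Theorem~\ref{T:criterion}, applied twice: once to the set $A$ itself and once to each deletion $A\setminus v$. Recall that Theorem~\ref{T:criterion} says a nonempty subset is a rank-determining set of $\Gamma$ if and only if every nonempty special open set intersects it. Consequently condition (i) of the proposition is nothing but a restatement of ``$A$ is a rank-determining set.'' So the whole content is to show that, given that $A$ is rank-determining, minimality is equivalent to condition (ii).

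\textbf{Necessity.} Suppose $A$ is a minimal rank-determining set. Then (i) holds by Theorem~\ref{T:criterion}. Fix $v\in A$. Since $A\setminus v$ is \emph{not} a rank-determining set, Theorem~\ref{T:criterion} provides a nonempty special open set $U$ with $U\cap(A\setminus v)=\emptyset$. But $A$ is rank-determining, so the nonempty set $U$ must meet $A$; combined with $U\cap(A\setminus v)=\emptyset$ this forces $U\cap A=\{v\}$, which is exactly (ii). (In the degenerate case $A=\{v\}$, where $A\setminus v=\emptyset$ is not a rank-determining set by fiat, one simply takes $U=\Gamma$, a trivial special open set with $U\cap A=\{v\}$; so (ii) still holds.)

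\textbf{Sufficiency.} Conversely, assume (i) and (ii). By (i) and Theorem~\ref{T:criterion}, $A$ is a rank-determining set. For each $v\in A$, condition (ii) gives a special open set $U$ with $U\cap A=\{v\}$; in particular $U$ is nonempty and $U\cap(A\setminus v)=\emptyset$, so by Theorem~\ref{T:criterion} the set $A\setminus v$ is not a rank-determining set. Since this holds for every $v\in A$, the set $A$ is minimal.

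\textbf{Expected main obstacle.} There is essentially no analytic difficulty here once Theorem~\ref{T:criterion} is in hand; the proof is a bookkeeping translation between the language of rank-determining sets and that of special open sets. The only point requiring a little care is matching the logical direction of the quantifiers --- ``$A\setminus v$ fails to be rank-determining'' must be read as ``some nonempty special open set avoids $A\setminus v$'' --- together with the harmless degenerate case $A=\{v\}$, which is absorbed by allowing $U=\Gamma$ among the special open sets.
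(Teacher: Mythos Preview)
Your proposal is correct and follows exactly the approach the paper intends: the paper states that Proposition~\ref{P:mini} is an immediate corollary of Theorem~\ref{T:criterion} and gives no separate proof, and your argument simply unpacks that corollary by applying the criterion once to $A$ and once to each $A\setminus v$. Your handling of the degenerate singleton case is also fine.
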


\begin{ex}
\begin{figure}[h]
\centering
\includegraphics[width=\textwidth]{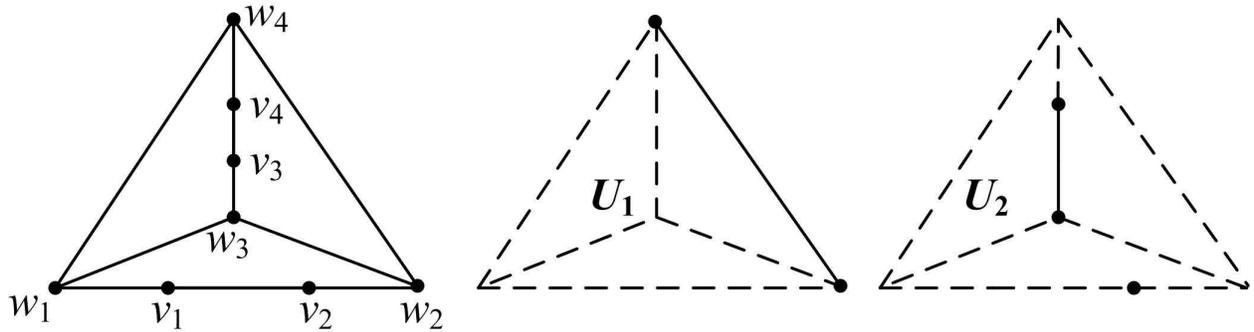}
\caption{Two examples of special open sets on the metric graph
corresponding to $K_4$.}\label{F:mini}
\end{figure}
Let us reconsider a metric graph corresponding to $K_4$ as in
Example~\ref{E:K4}. The dashed areas of Figure~\ref{F:mini}, $U_1$
and $U_2$, are two special open sets. Let $A_1=\{w_1,w_2,w_4\}$ and
$A_2=\{v_1,v_2,v_3,v_4\}$. By Example~\ref{E:K4}, $A_1$ and
$A_2$ are both rank-determining sets. We will show that they are
minimal rank-determining sets. Note that the points in
$A_{1,2}$ are symmetrically distributed. Thus we only need to find
some special open sets that intersect $A_1$ or $A_2$ at exactly one
point by Proposition~\ref{P:mini}. We observe that $U_1\bigcap A_1=\{w_1\}$ and
$U_2\bigcap A_2=\{v_1\}$. Thus $U_1$ and $U_2$ are the
desired special open sets.
\end{ex}

We've given a proof of Theorem~\ref{T:g+1} by showing constructively that a family of finite subsets of $\Gamma$, all having cardinality $g+1$, are rank-determining sets. Now we will prove that these rank-determining sets are minimal.

\begin{prop}
Let $\Gamma$ be a metric graph of genus $g$ and let $T$ be a spanning tree of $\Gamma$. Denote the $g$ disjoint open segments of $\Gamma\setminus T$ by $e_1,e_2,\cdots,e_{g}$. Choose arbitrarily a point $v_0$ from $T$ and a point $v_i$ from $e_i$ for $i=1,2,\cdots,g$. Let
$A=\{v_0,v_1,\cdots,v_g\}$. Then $A$ is a minimal rank-determining set of $\Gamma$.
\end{prop}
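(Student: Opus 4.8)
The plan is to use the criterion of Proposition~\ref{P:mini}. That this particular $A$ satisfies $\mathcal{L}(A)=\Gamma$, i.e.\ that $A$ is rank-determining, was already established in the remark that deduces Theorem~\ref{T:g+1} from Proposition~\ref{P:2:3}, so it remains only to produce, for each $v\in A$, a nonempty special open set meeting $A$ exactly at $v$; equivalently, by Theorem~\ref{T:criterion}, to show that $A\setminus v$ is \emph{not} rank-determining, which by the same theorem amounts to exhibiting a nonempty special open set disjoint from $A\setminus v$. I would treat the base point $v_0$ and the points $v_i$ ($1\le i\le g$) separately. (When $g=0$ the set $A=\{v_0\}$ is vacuously minimal, so assume $g\ge1$.)

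For $v=v_0$: set $U_0:=\Gamma\setminus\{v_1,\dots,v_g\}$. Since $T$ is a spanning tree, every vertex of $\Gamma$ lies on $T$, so each $v_k$ is an interior point of an edge of $\Gamma$, and the two ends of $e_k$ are joined inside $T$ by a path avoiding $v_k$; hence $\Gamma\setminus\{v_1,\dots,v_g\}$ is connected, its complement consists of the $g$ isolated non-vertex points $v_k$, and each such point has out-degree $2$ from $U_0$. By the definition of special open set, $U_0$ is therefore a nonempty (and, as $g\ge1$, nontrivial) special open set with $U_0\cap A=\{v_0\}$; thus $A\setminus v_0$ is not rank-determining.

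For $v=v_i$ with $1\le i\le g$ I would reduce everything to the single claim
\[
(\star)\qquad v_i\notin\operatorname{supp}|D|,\qquad\text{where }D:=\sum_{u\in A\setminus v_i}(u)=(v_0)+\sum_{k\ne i}(v_k)\in\operatorname{Div}_+^g\Gamma .
\]
Granting $(\star)$: by Lemma~\ref{L:2:5}, $(\operatorname{supp}|D|)^c$ is a disjoint union of nonempty special open sets, and the one containing $v_i$ is disjoint from $\operatorname{supp}|D|\supseteq A\setminus v_i$, so $A\setminus v_i$ is not rank-determining. To prove $(\star)$ I would run Algorithm~\ref{A:main} on the effective divisor $D$ with base point $v_i$; by Corollary~\ref{C:1:1}(i) it suffices to show that the resulting $v_i$-reduced divisor $D'$ has $D'(v_i)=0$. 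By Lemma~\ref{L:1:4}(i) this fails only if some $v_i$-move in the course of the algorithm deposits a chip onto $v_i$, and by Definition~\ref{D:3} a move deposits a chip onto $v_i$ only when $v_i$ is the point of $\mathcal{U}_{S,v_i}\cap(\Omega\cup\{v_i\})$ closest to some component $X_j$ of $\mathcal{U}_{S,v_i}^c$ (i.e.\ $v_i\in P_j^{(1)}$). Since $v_i$ is an interior point of the edge $e_i$ and $\Omega$ has no vertex in the interior of $e_i$, a geodesic from $X_j$ to $v_i$ either reaches $v_i$ through one of the two vertex-ends $x_i,y_i$ of $e_i$ — which is then a strictly closer point of $\Omega$, hence forbids $v_i\in P_j^{(1)}$, as long as that end is still in $\mathcal{U}_{S,v_i}$ — or else $X_j$ already meets $\overline{e_i}$. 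So $D'(v_i)=0$ would follow from the invariant that throughout the run of the algorithm no component of $\mathcal{U}_{S,v_i}^c$ ever meets $e_i^o$, which holds initially because $\operatorname{supp}D^{(0)}=A\setminus v_i$ avoids $e_i^o$ (as $v_0\in T$ and $v_k\in e_k$ for $k\ne i$), and which I would propagate through the moves.

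I expect this last propagation — morally, the statement that the extra cycle $C_i=\overline{e_i}\cup P_i$ which $e_i$ closes up in $\Gamma$ is "invisible" to $A\setminus v_i$, so that reducing $D$ towards $v_i$ never piles a chip onto $v_i$ — to be the main obstacle; the examples (e.g.\ a metric graph on $K_4$ with $v_i$ an edge midpoint) show that the naive candidate $\mathcal{U}_{A\setminus v_i,\,v_i}$ itself need not be a special open set, so one really does have to follow the algorithm (or argue along equivalent lines). An alternative, more algebraic route to $(\star)$ is to show directly in $\operatorname{Pic}^{g-1}(\Gamma)$ that the class of $D-(v_i)$ is not effective: the cycle classes $[C_1],\dots,[C_g]$ form a $\mathbb{Z}$-basis of $H_1(\Gamma,\mathbb{Z})$, the term $-(v_i)$ contributes a nonzero "$e_i$-component" of $D-(v_i)$, and every effective divisor linearly equivalent to $(v_0)+\sum_{k\ne i}(v_k)$ is supported away from $e_i^o$ and hence cannot compensate for it — though making the period lattice precise requires some care, and this is essentially the same genus count underlying Theorem~\ref{T:g+1}. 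Either way, combining the two cases with the known equality $\mathcal{L}(A)=\Gamma$ and invoking Proposition~\ref{P:mini} completes the proof.
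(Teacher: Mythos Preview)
Your treatment of $v_0$ is identical to the paper's: both take $U_0=\Gamma\setminus\{v_1,\dots,v_g\}$ and observe that each $v_k$ is an isolated boundary point of out-degree~$2$.

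For $v_i$ with $i\ge 1$, however, you detour through the claim $(\star)$ and the reduction algorithm, and you yourself flag the propagation step (``no component of $\mathcal{U}_{S,v_i}^c$ ever meets $e_i^o$'') as the main obstacle. That obstacle is real: when an endpoint $x_i$ of $e_i$ happens to lie in the current $S^{(t)}$ (i.e.\ belongs to a saturated component), your ``a closer vertex blocks the geodesic'' argument no longer applies, and one has to analyze how the component containing $x_i$ fires into $e_i$. This can be handled, but it is genuine work, and neither your algorithmic sketch nor the $\mathrm{Pic}^{g-1}$ sketch supplies it. So as written the $v_i$ case is incomplete.

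The paper avoids all of this with a direct, two-line construction that you may find instructive. Let $u_a,u_b$ be the two ends of $e_i$ (both vertices, since $T$ contains all vertices), and let $u_c$ be the \emph{median} of $u_a,u_b,v_0$ in the tree $T$, i.e.\ the unique point in
\[
\Lambda_T^{[u_a,u_b]}\cap\Lambda_T^{[u_a,v_0]}\cap\Lambda_T^{[u_b,v_0]},
\]
where $\Lambda_T^{[x,y]}$ denotes the unique simple path in $T$ from $x$ to $y$. Set
\[
U_i:=\mathcal{U}_{\{u_c,\,v_1,\dots,\widehat{v_i},\dots,v_g\},\,v_i}.
\]
Then $U_i\cap A=\{v_i\}$ (since $v_0$ lies on the $u_c$-side of $T$), and every connected component of $U_i^c$ is either a singleton $\{v_k\}$ with out-degree~$2$, or a closed set having $u_c$ as a boundary point with $\mathrm{outdeg}(u_c)=2$ (the two directions along $\Lambda_T^{[u_a,u_b]}$ towards $u_a$ and $u_b$ lead back into $U_i$). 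Hence $U_i$ is special, and Proposition~\ref{P:mini} applies immediately. No appeal to $(\star)$, to reduced divisors, or to Algorithm~\ref{A:main} is needed.

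In short: your strategy is sound in outline, but the paper's explicit tree-median construction of $U_i$ replaces your unfinished dynamical/Jacobian argument with a purely combinatorial verification.
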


\begin{proof}
It suffices to find $g+1$ special open sets $U_0,U_1,\cdots,U_g$ such that $U_i\bigcap A=\{v_i\}$ for $i=0,1,\cdots,g$ by Proposition~\ref{P:mini}.

Let $U_0=\Gamma\setminus\{v_1,\cdots,v_g\}$. Clearly $U_0$ is connected and $U_0\bigcap A=\{v_0\}$. It is easy to see that $U_0$ is a desired special open set. Now let us find the remaining $g$ special open sets as required. Without loss of generality, we only need to find $U_1$ for $v_1$. Let $u_a$ and $u_b$ be the two ends of $e_1$. Note that if $x$ and $y$ are two points (not necessarily distinct) in $T$, then there exists a unique simple path (no repeated points) on $T$ connecting $x$ and $y$, which we denote $\Lambda_T^{[x,y]}$. We observe that $\Lambda_T^{[u_a,u_b]}\bigcap\Lambda_T^{[u_a,v_0]}\bigcap\Lambda_T^{[u_b,v_0]}$ contains exactly one point, which we denote $u_c$. Let $U_1=\mathcal{U}_{\{u_c,v_2,\cdots,v_g\},v_1}$. Then $U_1\bigcap A=\{v_1\}$ and a connected component of $U_1^c$ is either a single point in $\{v_2,\cdots,v_g\}$ or a closed subset $X$ of $\Gamma$ with $u_c$ on its boundary such that $\text{outdeg}_X(u_c)=2$. Thus $U_1$ is a special open set intersecting $A$ only at $v_1$. It follows that $A$ is a minimal rank-determining set of $\Gamma$.
\end{proof}

Our investigation shows that $g+1$ appears to be an upper bound for the cardinality of minimal rank-determining sets, which we formulate as a conjecture here.

\begin{conj}
Let $\Gamma$ be a metric graph of genus $g$. Then every minimal rank-determining set of $\Gamma$ has cardinality at most $g+1$.
\end{conj}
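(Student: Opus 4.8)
The plan is to argue by induction on the genus $g$, using Proposition~\ref{P:mini} to convert the minimality of $A$ into concrete data: for each $v\in A$ there is a nonempty special open set $U_v$ with $U_v\cap A=\{v\}$, and (condition (i) of Proposition~\ref{P:mini}) every nonempty special open set meets $A$. When $g=0$ the graph is a tree, so by Lemma~\ref{L:2:3} the only nonempty special open set is $\Gamma$ itself; it meets all of $A$, so condition (ii) forces $\#A=1=g+1$. For the inductive step assume $g\geqslant 1$ and set $k=\#A$; we must show $g\geqslant k-1$, and we may assume $k\geqslant 2$.

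Fix $v_0\in A$ and a certifying special open set $U_0$ with $U_0\cap A=\{v_0\}$. Since $k\geqslant 2$, $U_0$ is nontrivial, so $\overline{U_0}$ has genus at least $1$ by Lemma~\ref{L:2:3}. Let $X_1,\dots,X_n$ be the connected components of $U_0^c$; the definition of a special open set provides in each $X_j$ a point $w_j\in\partial U_0$ with $\textrm{outdeg}_{X_j}(w_j)\geqslant 2$, and $A\setminus v_0$ is partitioned by the sets $A_j:=A\cap X_j$. Choosing a vertex set of $\Gamma$ containing $\partial U_0$ and counting edges and vertices piece by piece yields the genus identity
\[
g \;=\; g(\overline{U_0}) \;+\; \sum_{j=1}^{n}\bigl(g(X_j)+m_j-1\bigr),\qquad m_j:=\#(\partial U_0\cap X_j)\geqslant 1,
\]
in which every summand $g(X_j)+m_j-1$ is non-negative.

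The heart of the argument is to run the induction inside each component $X_j$ with $A_j\neq\emptyset$, viewing $X_j$ as a metric graph on its own, with $w_j$ playing the role of an extra ``free'' point. One would like to show that $B_j:=A_j\cup\{w_j\}$ again satisfies the hypotheses of the statement in $X_j$ — intersecting each $U_v$ ($v\in A_j$) with $X_j$ to produce the certifying special open sets for the points of $A_j$, and using $\textrm{outdeg}_{X_j}(w_j)\geqslant 2$ to produce one for $w_j$ — so that the inductive hypothesis gives $g(X_j)\geqslant\#B_j-1=\#A_j$ whenever $w_j\notin A_j$. Summing then gives $g\geqslant g(\overline{U_0})+\sum_{j:A_j\neq\emptyset}\#A_j\geqslant 1+(k-1)=k$, slightly stronger than required. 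When $w_j\in A_j$ one only gets $g(X_j)\geqslant\#A_j-1$, costing a unit of genus in that component; this is harmless when $m_j\geqslant 2$ (the term $m_j-1$ absorbs it) and also harmless if it happens for at most one $j$ — that single exception is where the global ``$+1$'' is spent.

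I expect the main obstacle to be precisely this bookkeeping: controlling the boundary of the certifying open set $U_0$ so that at most one component of $U_0^c$ is attached to $\overline{U_0}$ at a single point of $A$ at which the out-degree-$2$ condition is realized. Ruling out (or absorbing) two such components seems to require either a clever choice of the pair $(v_0,U_0)$ — for instance minimizing $\#(\partial U_0\cap A)$ or the number of components of $U_0^c$, or taking $v_0$ ``outermost'' in $A$ — or a strengthened inductive invariant that carries distinguished boundary points along; here condition (i) of Proposition~\ref{P:mini}, so far unused, should be what makes such a choice possible. A secondary technical point, needed even in the good case, is that $U_v\cap X_j$ and the neighbourhood of $w_j$ really are special open sets of the standalone graph $X_j$; this is the sort of surgery already performed in the proof of Proposition~\ref{P:2:5}, and I would model it on that argument. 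Finally, an alternative route avoiding the genus identity is to build directly from $U_{v_0},\dots,U_{v_{k-1}}$ a family of $k-1$ pairwise disjoint nonempty special open sets and invoke Lemma~\ref{L:2:3}; there the difficulty is the dual one of trimming the mutually overlapping $U_v$ to disjoint sets without losing the defining property of special open sets.
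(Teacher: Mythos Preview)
The statement you are attempting to prove is presented in the paper as an open \emph{conjecture}, not a theorem; the paper offers no proof of it. So there is no paper argument to compare your proposal against.

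As for the proposal itself, you have correctly identified where the difficulty lies but have not resolved it. Your own summary is accurate: the inductive bookkeeping breaks down precisely when two or more components $X_j$ of $U_0^c$ are attached to $\overline{U_0}$ at a single boundary point that happens to lie in $A$ (i.e.\ $m_j=1$ and $w_j\in A_j$), and you give no argument ruling this out or absorbing the lost units of genus. The hope that ``condition (i) of Proposition~\ref{P:mini}, so far unused, should be what makes such a choice possible'' is not substantiated, and indeed condition (i) is a global covering condition that does not obviously constrain the local boundary structure of a single certifying $U_0$. The secondary technical point you flag is also genuine: $U_v\cap X_j$ need not be connected, and even when it is, the boundary components of its complement in $X_j$ may fail the out-degree condition at points of $\partial U_0$; the surgery in Proposition~\ref{P:2:5} goes in the opposite direction (from a subgraph to $\Gamma$) and does not transfer automatically. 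Your alternative route via Lemma~\ref{L:2:3} has the same essential obstruction, since trimming overlapping special open sets to disjoint ones while preserving the defining property is exactly the step that is missing. In short, this remains a conjecture.
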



\end{document}